\newcommand{\N}{\mathbb{N}}
\newcommand{\G}{\Gamma}
\newcommand{\ra}{\rangle}
\newcommand{\la}{\langle}
\newcommand{\ma}{\mathcal{O}_}
\newcommand{\te}{\text}
\newtheorem{thm}{Theorem}
\newtheorem*{conj*}{Conjecture}
\newtheorem{theorem}{Theorem}[section]
\newtheorem{proposition}[theorem]{Proposition}
\newtheorem{lemma}[theorem]{Lemma}
\newtheorem{corollary}[theorem]{Corollary}
\newtheorem{dfn}[theorem]{Definition}
\theoremstyle{definition}
\newtheorem{construction}[theorem]{Construction}
\newtheorem{example}[theorem]{Example}
\newcommand{\Aut}{\mathrm{Aut}}
\newcommand{\Dic}{\mathrm{Dic}}
\newcommand{\Dih}{\mathrm{Dih}}
\begin{document}
\title{Smallest graphs with given automorphism group
}
\author{DANAI DELIGEORGAKI}
\address{
Department of Mathematics, KTH, se-100 44 Stockholm, Sweden}
\email{danaide@kth.se}

\vspace{-2 mm}

\begin{abstract}\vspace{-1 mm}
For a finite group $G$, denote by $\alpha(G)$
the minimum number of vertices of any graph 
$\G$ having $\Aut(\G)\cong G$.
In this paper, we prove that $\alpha(G)\leq |G|$, with specified exceptions. The exceptions include four infinite families of groups, and 17 other small groups. Additionally, we compute $\alpha(G)$ for the groups $G$ such that $\alpha(G)> |G|$ where
the value $\alpha(G)$ was previously unknown.\vspace{-3 mm}
\end{abstract}

\maketitle
\vspace{-4 mm}
\section{Introduction}
\vspace{-1 mm}

In \cite{fru} it is shown that every finite group can be realised, up to isomorphism, as the automorphism group of a finite graph;
in fact, for every finite group $G$ there exist infinitely many finite graphs having automorphism group isomorphic to $G$.
Given a finite group $G$, define $\alpha(G)$ to be the smallest number of vertices of any graph $\G$ having $\Aut(\G)\cong G$. The problem of finding $\alpha(G)$ has been considered by many authors. The value of $\alpha(G)$ has been determined in \cite{arli} for abelian groups $G$, in \cite{gl1,gl,hag,djmc} for dihedral groups $G$, in \cite{last} for quasi-dihedral groups and quasi-abelian groups $G$
and in \cite{grav} for generalised quaternion groups  $G$. The 
question has also been investigated for several families of finite simple groups in \cite{lie}.
A recent survey on this problem can be found in \cite{jess}.
In \cite{Babai}, Babai showed that $\alpha(G)\leq 2|G|$, for every finite group $G$ that is not cyclic of order 3, 4 or 5. 
In this paper, we improve Babai's bound to $|G|$, with specified exceptions (including four infinite families of groups).

In Table \ref{table:kd}, we let
$\Dic_m=\la a,b\mid a^{2m}=1, \;b^2=a^m,\;bab^{-1}=a^{-1} \ra$ for $m=3,5,6$, which is a group of order $4m$, 
and
${G}_{16}=\la a,b \mid a^4=b^4=1,\;bab^{-1}=a^{-1} \ra$,
${G}'_{16}=\la a,b\mid a^{8}=b^{2}=1,\;bab^{-1}=a^{5}\ra$, which are groups of order 16.
\begin{thm}{\label{1}}
Let $G$ be a finite group of order $n$. Then one of the following is true:
\begin{enumerate}[$i)$]
    \item$\alpha(G)\leq n,$
    \item$G$ is cyclic of order $p^k$ or $2p$, where $p$ is prime and $k$ is positive integer $(n\neq 2)$, 
     \item$G$ is $Q_{2^r}$ or $Q_{2^{r}}\times C_2$, where $Q_{2^r}$ is the generalised quaternion group of order $2^r,r\geq 3$,
    \item$G$ is one of the 17 exceptional groups of order at most 25 shown in Table \ref{table:kd}.
\end{enumerate}
If $ii),iii)$ or $iv)$ holds, then $\alpha(G)>n$; 
indeed, if $ii)$ holds,
$\alpha(G)$ is as described in Propositions \ref{27} and
\ref{27b};
if $iii)$ holds,
$\alpha(G)=2n$ or $\alpha(G)=n+2$ for $G=Q_{2^r}$ or $G=Q_{2^{r}}\times C_2$, respectively; if $iv)$ holds,
$\alpha(G)$ is as shown in Table \ref{table:kd}.
\end{thm}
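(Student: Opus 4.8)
The plan is to derive the upper bound $\alpha(G)\le n$ from the classification of graphical regular representations (GRRs) and then to dispose of the finitely many obstructions one at a time. If $G$ admits a GRR --- a Cayley graph $\mathrm{Cay}(G,S)$ with $\Aut(\mathrm{Cay}(G,S))=G$ acting regularly --- then that graph has exactly $n$ vertices and $i)$ holds. By the GRR classification (Imrich--Watkins--Nowitz--Hetzel--Godsil), the groups \emph{without} a GRR are precisely: (a) abelian groups of exponent $>2$; (b) generalized dicyclic groups; (c) a short explicit list of small groups. So it suffices to analyze these three families, show that all but finitely many of their members still satisfy $\alpha(G)\le n$ by a non-regular realization, and isolate those that do not.

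Family (a) I would dispatch by quoting Arlinghaus' determination of $\alpha(G)$ for every finite abelian group \cite{arli}: from that result, the abelian groups with $\alpha(G)>|G|$ are exactly the cyclic groups of prime power order $\ne C_2$ and the cyclic groups of order $2p$ with $p$ an odd prime --- the two abelian infinite families, with exact values recorded in Propositions \ref{27} and \ref{27b} --- together with a couple of small non-cyclic abelian groups that appear in Table \ref{table:kd}; every remaining abelian group has $\alpha(G)\le n$, typically via a disjoint union of small graphs each with automorphism group $C_2$, or a circulant carrying a rigidifying gadget.

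Family (b) is the core of the argument and the step I expect to be hardest. Write $G=\Dic(A)$ with $A$ abelian of index $2$ and $a_0\in A$ the distinguished involution, so $b^2=a_0$ and $bab^{-1}=a^{-1}$; in particular every involution of $A$ is central in $G$. The generalized quaternion groups $Q_{2^r}$ (the case $A$ cyclic of $2$-power order) have $\alpha(G)=2n$ by \cite{grav} --- there the content is the lower bound, since Babai already gives $\alpha(G)\le 2n$ --- and I would treat $Q_{2^r}\times C_2$ and the small dicyclic groups $\Dic_3,\Dic_5,\Dic_6$ by hand. For every other generalized dicyclic $G$ I would construct an explicit graph on at most $n$ vertices with automorphism group $G$. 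The naive attempt --- two copies of a Cayley graph of $A$, glued by a \emph{twisted} connection so that the gluing map $\beta$ has order $4$, squares to translation by $a_0$, and conjugates the $A$-translations to their inverses (thereby encoding $b^2=a_0$ and $bab^{-1}=a^{-1}$) --- fails, because the bipartite gluing layer is forced to be $a_0$-invariant and that always leaves an extra ``shear'' automorphism fixing one copy pointwise and translating the other by $a_0$, producing a group of order $2n$. So the real construction must be subtler: it has to break this shear and the inversion symmetry inherited from the Cayley graph of $A$ at once, by attaching bounded-size asymmetric gadgets or by letting $G$ act with several orbits, all within a budget of $n$ vertices. This is precisely where the exceptions materialize: when $A$ is cyclic ($Q_{2^r}$) it is too rigid to admit such a construction, and when $A$ is small ($\Dic_3,\Dic_5,\Dic_6$, and $Q_{2^r}\times C_2$) there is simply no room for the gadgets.

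Finally, for those exceptions and for the finitely many non-abelian, non-generalized-dicyclic groups in (c) (such as $D_3,D_4,D_5,A_4$ and the order-$16$ group $G'_{16}$), I would prove the matching lower bounds and compute the exact values. The bound $\alpha(G)>n$ in each case would come from a finite case analysis over the orbit structures of a faithful action of $G$ on at most $n$ vertices, showing that each structure either forces extra automorphisms --- for the generalized quaternion groups and their $C_2$-extensions this is driven by the rigidity of the underlying generalized quaternion factor --- or cannot be rigidified; the matching upper bound is then a small ad hoc graph, giving $\alpha(Q_{2^r}\times C_2)=n+2$ (attach a two-vertex gadget encoding the $C_2$ factor to a minimal realization of $Q_{2^r}$) and the entries of Table \ref{table:kd}. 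The few groups in (c) with $\alpha(G)\le n$ are handled by an explicit small graph (for instance $\Aut(C_m)\cong D_m$ gives $\alpha(D_m)\le m$, so $D_3,D_4,D_5$ are not exceptional). Assembling the cases yields the stated dichotomy together with all the claimed values.
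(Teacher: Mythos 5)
Your top-level architecture matches the paper's exactly: invoke the GRR-Theorem, quote Arlinghaus for the abelian exceptions, and handle the generalised dicyclic groups and the 13 small GRR-exceptions separately, proving ad hoc lower bounds for the groups that end up in the exceptional list. However, there is one genuine gap at what you yourself identify as "the core of the argument and the step I expect to be hardest": for a generalised dicyclic group $G=\Dic(A,b^2)$ that is not one of the exceptions, you correctly diagnose why the naive two-copies-of-$\mathrm{Cay}(A)$ gluing fails (the shear by $a_0$ and the inversion of $A$ survive), but you then only assert that "the real construction must be subtler," without producing it. That construction is the central content of the paper (Construction \ref{ip1}), and it is not of the form you gesture at. The paper's idea is to decompose $A=X\oplus\la y\ra$ with $b^2\in\la y\ra$ (Lemma \ref{30a}), observe that $G/X\cong Q_{2^{r+1}}$ (or $C_4$) and $G/\la y\ra\cong\Dih(X)$, realise each quotient separately --- the quaternion quotient by Babai's doubling construction on $2|G/X|$ vertices, the generalised dihedral quotient by a GRR on $|G/\la y\ra|$ vertices (which exists by Lemma \ref{a33} once $|X|\geq 6$, $|X|\neq 9$, with small cases done by hand) --- and then join the two pieces by a bipartite pattern between the cosets $T_i$ and $S_i$ that forces any automorphism to act compatibly on both quotients, recovering $G$ as the fibre product. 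Since $\la y\ra\cap X=1$, the two quotient actions jointly determine the element of $G$, and the vertex count $2|G|/|X|+|G|/|\la y\ra|$ stays below $|G|$ precisely because $|X|\geq 3$ and $|\la y\ra|\geq 2$. Without this (or an equivalent) construction your proof of $\alpha(G)\leq|G|$ for the infinite family of non-exceptional generalised dicyclic groups does not go through.

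Two smaller points. First, your description of the abelian exceptions is slightly off: besides the two infinite cyclic families and the non-cyclic entries of Table \ref{table:kd}, the table also contains the cyclic groups $C_{12},C_{15},C_{20},C_{21}$, which are neither of prime-power nor of $2p$ order; also, deducing the full list from Arlinghaus still requires the explicit case analysis on the number of cyclic factors carried out in the proof of Proposition \ref{27d}, since Arlinghaus gives an algorithm rather than a ready-made classification of when $\alpha(G)>|G|$. Second, the lower bounds in Section 5 ($Q_{2^r}$, $Q_{2^r}\times C_2$, $\Dic_3$, $\Dic_5$, $\Dic_6$, $G_{16}$, $G'_{16}$, $Q_8\times C_3$, $A_4$) each require constructing an explicit extra automorphism for every admissible orbit/stabiliser configuration; "a finite case analysis over orbit structures" is the right strategy but is where most of the remaining work lives.
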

As a consequence, we deduce when equality holds in Babai's bound.
\begin{corollary}Let $G$ be a finite group of order $n$. Then 
$\alpha(G)=2n$ if and only if 
\begin{enumerate}[$i)$]
   \item$G$ is a generalised quaternion group of order $2^r,r\geq 3$, or
    \item$G$ is cyclic of order $p$, where $p$ is prime and $p\geq 7$, or
    \item$G$ is the abelian group $C_3\times C_3$.
\end{enumerate}
\end{corollary}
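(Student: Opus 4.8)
The plan is to deduce the Corollary directly from Theorem \ref{1} together with Babai's inequality $\alpha(G)\le 2n$. The key observation is a dichotomy packaged in Theorem \ref{1}: if case $i)$ holds then $\alpha(G)\le n$, while if any of $ii)$, $iii)$, $iv)$ holds then $\alpha(G)>n$ and, moreover, $\alpha(G)$ is given exactly (by Propositions \ref{27} and \ref{27b}, or by the value $2n$ or $n+2$ in case $iii)$, or by Table \ref{table:kd} in case $iv)$). Hence for a finite group of order $n\ge 1$, the hypothesis $\alpha(G)=2n$ forces $\alpha(G)=2n>n$, which rules out case $i)$; so exactly one of $ii)$, $iii)$, $iv)$ applies, and it remains to check, family by family, precisely when the prescribed value of $\alpha(G)$ equals $2n$.

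For the forward direction I would therefore split into the three cases. If $iii)$ holds, then $\alpha(G)=2n$ rules out $G=Q_{2^r}\times C_2$, since there $\alpha(G)=n+2$ with $n=2^{r+1}\ge 16$, so $n+2<2n$; thus $G=Q_{2^r}$. If $iv)$ holds, one scans the finitely many entries of Table \ref{table:kd}: among the $17$ exceptional groups the only one whose listed $\alpha$-value is twice its order is $C_3\times C_3$ (with $\alpha=18=2\cdot 9$), every other tabulated value lying strictly between $n$ and $2n$. If $ii)$ holds, then $G$ is cyclic of order $p^k$ or $2p$, and one reads off Propositions \ref{27} and \ref{27b}: for $G=C_{2p}$, and for $G=C_{p^k}$ with $k\ge 2$, the value is strictly less than $2n$; while for $G=C_p$ it equals $2n=2p$ exactly when $p\ge 7$ (for $p=3,5$ the value is larger than $2p$, consistently with these being Babai's exceptional groups, and in particular $\ne 2p$, and $p=2$ is excluded from $ii)$ because there $n\ne 2$). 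Collecting the three cases yields exactly the list in the statement.

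For the reverse direction I would simply invoke the corresponding parts of Theorem \ref{1}: $\alpha(Q_{2^r})=2\cdot 2^r$ from $iii)$; $\alpha(C_p)=2p$ for $p\ge 7$ from Proposition \ref{27}; and $\alpha(C_3\times C_3)=18$ from Table \ref{table:kd}. Since none of these groups is $C_3$, $C_4$ or $C_5$, Babai's bound $\alpha(G)\le 2n$ also applies, so $2n$ is simultaneously a lower and an upper bound and equality holds — though in fact the cited results already give the exact value. The only genuine work is the bookkeeping in the forward direction: verifying from Propositions \ref{27} and \ref{27b} that no cyclic group of order $p^k$ with $k\ge 2$, and no cyclic group of order $2p$, attains $2n$, and confirming by inspection of Table \ref{table:kd} that $C_3\times C_3$ is the unique exceptional group attaining $2n$. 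Neither point is deep, but both depend on having the explicit values in hand, and this is the step I expect to be the main (indeed the only) obstacle.
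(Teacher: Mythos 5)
Your proposal is correct and is essentially the paper's own (implicit) argument: the corollary is stated there as an immediate consequence of Theorem \ref{1}, and your case analysis simply makes explicit the routine verification that among the exact values supplied by Propositions \ref{27} and \ref{27b}, the quaternion case of $iii)$, and Table \ref{table:kd}, only $Q_{2^r}$, $C_p$ with $p\geq 7$, and $C_3\times C_3$ attain $2n$. The bookkeeping you flag checks out (e.g.\ $p^r+p=2p^r$ forces $r=1$, and every tabulated value other than $\alpha(C_3\times C_3)=18$ lies strictly between $n$ and $2n$), so there is no gap.
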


\begin{table}[htb!]
\caption{The groups $G$ mentioned in Theorem \ref{1}, $iv)$, and the values $\alpha(G)$.}
\label{table:kd}
\begin{tabular} {|c | c |c  |}
 \hline
 
  & $G$ & $\alpha(G)$ \\ 
 \hline\hline
 $1-4$ &$\;\;\;\;\;\;\; {{C}}_{12},\;\;C_{15},\;\;C_{20},\;\;C_{21}\;\;\;\;\;\;\; $& $\;\;18,\;21,\;25,\;23 \;\; $\\
 \hline 
 $5-8$ & $\;\;\;\;\;\;\;  {{C}_2}\times C_4, \;\;C_3\times C_3,\;\; C_4\times C_4,\;\; C_5\times C_5 \;\;\;\;\;\;\;$ & $12,\;18,\;20,\;30 $\\ 
 \hline 
 $9-10$ &$C _2\times C_2\times C_3,\;\;  C_2\times C_3\times C_3 \;$& $ 13,\;20 $\\
 
  \hline 
 $11-13$ & $\Dic_3,\;\; \Dic_5,\;\; \Dic_6$ &$17,\;23,\;25 $ \\ 
 \hline
 $14$ & ${G}_{16}$
 & $18$   \\ 
 \hline
 $15$ & A$_4 $& $ 16$  \\
 \hline 
 $16$ & ${G}'_{16}$ 
 &   $18$ \\ 
 \hline
 $17$ & ${Q}_{8} \times C_{3}$&   $25$ \\

 \hline

\end{tabular}

\end{table}

The main tool in the proof of Theorem \ref{1} is the GRR-Theorem (Theorem \ref{23b}). It states that, with some specified families of exceptions, every finite group $G$ has a graphical regular representation (GRR), i.e., a Cayley graph having full automorphism group isomorphic to $G$. If a group $G$ has a GRR, then $\alpha(G)\leq |G|$. Therefore, in order to prove Theorem \ref{1}, it suffices to study the exceptions in the GRR-Theorem.

Making use of the preliminary results presented in Section 2, we prove Theorem \ref{1} across Sections 3, 4, and 5.
Section 3 concerns the case of abelian groups; the key fact is that $\alpha(G)$ has been determined for every abelian group $G$, in \cite{arli}. 
Sections 4 and 5 are devoted to the non-abelian exceptional groups of the GRR-Theorem.
In Section 4, we address the non-abelian groups $G$ for which
the assertion of Theorem \ref{1} is that $\alpha(G)\leq |G|$. For these groups, we
construct a graph on at most $|G|$ vertices having automorphism group isomorphic to $G$. In Section 5,
we show that there exists no graph on at most $|G|$ vertices with automorphism group isomorphic to $G$, for the non-abelian groups $G$ for which the assertion  of Theorem \ref{1} is that $\alpha(G)>|G|$.
The values of $\alpha(Q_{2^r})$, $\alpha(Q_{2^r}\times C_2)$ and $\alpha(G)$ for the non-abelian groups $G$ in Table \ref{table:kd} are also justified in this section.

\subsection*{Acknowledgements} $\;$\\
This article is based on a dissertation that  was done as part of the course `MSc in Pure Mathematics' at Imperial College London in the academic year 2018-2019. I would like to thank the Onassis Foundation for supporting these studies with a scholarship [Scholarship ID: F ZO 023-1/ 2018-2019]. I am especially grateful to my advisor Martin W. Liebeck, who shared his enthusiasm and ideas on this topic, one of which led to Construction \ref{ip1}. I would also like to thank the anonymous referees for helpful comments.

\section{Background}
Throughout the paper, all groups and graphs mentioned are assumed to be finite.

Let us now present two families of groups that play an important role in our text.
\begin{dfn}\label{defdic}\cite{ww}
Let $A$ be an abelian group that contains an element of order $2k$ for some $k\geq 2$.
A group $G$ of the form $$G=\la A,b
\mid b^4=1,\;b^2\in A\setminus \{1\},\;bab^{-1}=a^{-1},\forall 
\;a\in A\ra$$ is called \textit{generalised dicyclic} and is denoted by $\Dic(A,b^2)$.
\vspace{1 mm}\\
If $A$ is cyclic, $G$ is simply called \textit{dicyclic}. It is denoted by $\Dic_{m}$, where $m=\frac{|G|}{4}$.
\vspace{1 mm}\\
A dicyclic group of order $2^r$ is called generalised \textit{quaternion} $({r\geq 3})$. We denote it by $Q_{2^r}.$
\end{dfn}
Note that the existence of the element of order $2k$ in Definition \ref{defdic} ensures that generalised dicyclic groups are non-abelian.

\begin{dfn}
Let $A$ be an abelian group. A group $G$ of the form $$G=\la A,b\mid b^2=1,\; bab^{-1}=a^{-1},\forall\; a\in A\ra$$ is called \textit{generalised dihedral} and is denoted by $\Dih(A)$.
\vspace{1 mm}\\
If $A$ is cyclic of order $m$, $G$ is the \textit{dihedral} group of order $2m$, which we denote by D$_{2m}$.
\end{dfn}

A \textit{graph}  $\G$ consists of a vertex set, which we denote by $V(\G)$ and an edge set, denoted by $E(\G)$; we consider an edge to be an unordered pair of vertices of $\G$. 
We denote an edge between $v,w\in V(\G)$ by $v\sim w$ or we say that $[v,w]\in E(\G)$. Moreover,
if $X$ is a subgraph of $\G$ and $v\in V(X)$, we denote by $\rho_X(v)$ the \textit{valency} of $v$ in $X$ and by $\rho(v)$ the valency of $v$ in the graph $\G$.
If a group $G$ acts on a graph $\G$ and $v\in V(\G)$, then we denote by $\ma v$ the \textit{orbit} containing $v$, $\ma v=\{gv\mid g\in G\}$, and by $G_v$ the \textit{stabilizer} of $v$, $G_v=\{g\in G\mid gv=v\}$.

Given a group $G$ and a set $S\subset G\setminus{\{\small{1}\}}$ that is inverse-closed, we define the \textit{Cayley graph} Cay$(G,S)$ to be the graph with vertex set $G$ and edges $\{x,sx\}$, for all $x \in G, s\in S.$

 A graph $\G$ is called a \textit{Graphical Regular Representation} (GRR) of a group $G$ if there exists some $S\subset G$ such that  $\te{Cay}(G,S)=\G$ and $\Aut(\G)\cong G$. The following theorem is known as the \textit{GRR-Theorem}.  It was proven by Godsil \cite{Godsil} for non-solvable groups and by Hetzel \cite{hetze} for solvable groups using previous results of several authors including \cite{im,nowit,nw,sabb,sabbb,w1,ww}.
\begin{theorem}{\cite{Godsil}}{\label{23b}} A group admits a GRR if and only if it is not an abelian group of exponent greater than 2, a generalised dicyclic group, or one of the 13 exceptional groups shown in Table \ref{table:kysymys}.
\end{theorem}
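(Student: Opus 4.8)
The plan is to treat the two implications separately---they are of quite different character---and, for the hard one, to follow the route by which the theorem was actually assembled from the work of many authors.

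\textbf{Necessity of the exceptions.} Suppose $\G=\te{Cay}(G,S)$ is a GRR. The regular representation embeds $G$ into $\Aut(\G)$; since a GRR has $|\Aut(\G)|=|G|$, this copy of $G$ is all of $\Aut(\G)$ and acts regularly, so the stabilizer $\Aut(\G)_1$ of the identity vertex is trivial. On the other hand $\Aut(\G)_1$ always contains $\Aut(G,S):=\{\theta\in\Aut(G):\theta(S)=S\}$, since any such $\theta$ fixes the vertex $1$ and sends an edge $\{x,sx\}$ to $\{\theta(x),\theta(s)\theta(x)\}$, again an edge. Hence $\Aut(G,S)=1$. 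It therefore suffices, for each $G$ in the first two exceptional classes, to exhibit a non-identity $\theta\in\Aut(G)$ with $\theta(g)\in\{g,g^{-1}\}$ for all $g\in G$; such a $\theta$ lies in $\Aut(G,S)$ for \emph{every} inverse-closed $S$, so no GRR can exist. For $G$ abelian of exponent greater than $2$ I would take $\theta\colon g\mapsto g^{-1}$, a non-trivial automorphism. For $G=\Dic(A,b^2)$, writing $y=b^2$, I would take $\theta$ to be the identity on $A$ and $g\mapsto gy$ on $G\setminus A=Ab$; a short computation shows $\theta$ is an automorphism, and since every $g\notin A$ satisfies $g^2=y$ and hence $g^{-1}=g^3=gy$, we get $\theta(g)\in\{g,g^{-1}\}$ for all $g$, while $\theta\neq1$ because $y\neq1$. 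The $13$ sporadic groups would be ruled out by inspecting their finitely many Cayley graphs one at a time.

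\textbf{Sufficiency.} This is the hard half. For any inverse-closed $S$ one automatically has $G\le\Aut(\te{Cay}(G,S))$ as a regular subgroup, so the task is to choose $S$ forcing equality, equivalently forcing $\Aut(\te{Cay}(G,S))_1=1$. I would split this into two sub-problems: (a) arrange that $\Aut(G,S)=1$, i.e.\ that no non-trivial \emph{group} automorphism stabilizes $S$; and (b) prove that, for such an $S$, the identity-vertex stabilizer contains no \emph{exotic} graph automorphism beyond $\Aut(G,S)$.

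Sub-problem (a) I would settle by a counting argument: as $|G|$ grows relative to $|\Aut(G)|$, the inverse-closed subsets of $G\setminus\{1\}$ not stabilized by any non-trivial group automorphism come to outnumber those that are, so a suitable $S$ exists for all but finitely many $G$---the genuine failures contributing part of the sporadic list. Sub-problem (b) is the real obstacle, and it is here that one is forced to split into cases. For $G$ solvable I would induct on the structure of $G$, building a GRR of $G$ from GRRs of subgroups and quotients via product and ``doubling'' constructions, together with the special cases (for $p$-groups, for nilpotent groups, and so on) settled by Imrich, Nowitz, Sabidussi, Watkins and others---this is Hetzel's line. For $G$ non-solvable I would instead analyse, using the structure theory of primitive permutation groups, the possible overgroups of the regular copy of $G$ inside $\te{Sym}(G)$ and eliminate each of them for a generic $S$---this is Godsil's line. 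In both branches the crux, which I expect to consume essentially all of the work, is the same: ensuring that the chosen (or inductively constructed) Cayley graph has no vertex-fixing automorphism except those coming from $\Aut(G,S)$.
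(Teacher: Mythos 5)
The paper does not prove this theorem at all: it is quoted as the GRR-Theorem of Hetzel (solvable case) and Godsil (non-solvable case), resting on a long chain of earlier papers of Imrich, Nowitz, Sabidussi and Watkins, and the present paper uses it as a black box. So the relevant comparison is between your proposal and that body of literature rather than anything in this text.

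Your \textbf{necessity} half is correct and essentially complete for the two infinite families, and it is the standard argument. The key observation --- that a GRR forces the identity-vertex stabilizer, hence $\Aut(G,S)=\{\theta\in\Aut(G):\theta(S)=S\}$, to be trivial, so it suffices to exhibit a non-identity automorphism $\theta$ with $\theta(g)\in\{g,g^{-1}\}$ for all $g$ --- is exactly how Watkins and Nowitz rule these groups out. Your verification for $\Dic(A,b^2)$ checks out: with $y=b^2$ one has $y^2=1$, $y$ central, and $g^2=y$ for every $g\notin A$, so the map fixing $A$ pointwise and sending $g\mapsto gy$ on $Ab$ is a non-trivial automorphism inverting every element outside $A$. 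Leaving the $13$ sporadic groups to a finite check is acceptable in principle, though note that the elementary abelian $2$-groups among them are \emph{not} covered by your inversion argument (inversion is trivial there), which is precisely why they appear on the sporadic list rather than in the ``exponent greater than $2$'' clause.

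The \textbf{sufficiency} half, however, is a research programme rather than a proof, and you say as much yourself. Sub-problem (a) is not in general settled by the counting heuristic you describe (for small or highly symmetric $G$ the automorphism-free inverse-closed sets need not exist or need not be found by counting --- this is where the sporadic exceptions and the delicate constructions of Nowitz--Watkins live), and sub-problem (b) --- excluding exotic vertex-fixing graph automorphisms --- is the entire content of Hetzel's thesis and Godsil's paper, amounting to hundreds of pages of case analysis. As a self-contained proof of the stated theorem your proposal therefore has a genuine gap in the forward direction; as a description of how the theorem is actually proved in the sources the paper cites, it is an accurate map of the terrain.
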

\begin{corollary}
If $G$ is a non-abelian
, non-generalised dicyclic group that is not one of the 13 groups shown in Table \ref{table:kysymys}, then $\alpha(G)\leq |G|.$
\end{corollary}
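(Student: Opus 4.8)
The plan is to read off the conclusion directly from the GRR-Theorem (Theorem \ref{23b}) together with the definition of $\alpha$. First I would verify that the hypotheses on $G$ exclude every class of exceptions appearing in that theorem. A non-abelian group is in particular not an abelian group of exponent greater than $2$, so the first exceptional class is vacuously avoided; the hypothesis explicitly removes the generalised dicyclic groups and the $13$ groups of Table \ref{table:kysymys}. Hence $G$ lies outside all three exceptional classes, and Theorem \ref{23b} guarantees that $G$ admits a GRR.

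Next I would unpack what a GRR provides. By definition there is an inverse-closed subset $S\subseteq G\setminus\{1\}$ such that the Cayley graph $\G=\te{Cay}(G,S)$ satisfies $\Aut(\G)\cong G$. Since $V(\G)=G$, this is a graph on exactly $|G|$ vertices whose automorphism group is isomorphic to $G$. Therefore $\alpha(G)$, being by definition the minimum number of vertices over all graphs with automorphism group isomorphic to $G$, satisfies $\alpha(G)\le |V(\G)|=|G|$, as claimed.

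There is no genuine obstacle here: the statement is a formal consequence of the GRR-Theorem and the definition of $\alpha$. The only point meriting a sentence of care is the observation that the phrase `non-abelian' already subsumes `not an abelian group of exponent greater than $2$', so that the three hypotheses listed in the corollary line up exactly with the three exceptional classes of Theorem \ref{23b}.
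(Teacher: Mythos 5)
Your proposal is correct and is exactly the argument the paper intends: the hypotheses rule out all three exceptional classes of Theorem \ref{23b}, so $G$ admits a GRR, which is a Cayley graph on $|G|$ vertices with automorphism group isomorphic to $G$, giving $\alpha(G)\leq |G|$. The paper treats this as immediate and gives no separate proof, so nothing is missing.
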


\begin{table}
\caption{The groups $G$ mentioned in the GRR-Theorem (Theorem \ref{23b}).}
\label{table:kysymys}
\begin{tabular} {|c | c | c |}
 \hline
 
  & $G$ & $|G|$\\ 
 \hline\hline
 $1-3$ &  ${C}_2\times C_2, \;\;C_2\times C_2\times C_2,\;\; C_2\times C_2\times C_2\times C_2$  & $4, 8, 16$\\
 \hline
 $4-6$ & ${D}_{6}, \;\; \it{D}_{8},\;\;\it{D}_{10}$ & $\;\;6, 8, 10\;\;$\\ 
 \hline
 $7$ & A$_4$ & $12$ \\ 
 \hline
  $8$ & $\la a,b,c\mid a^2=b^2=c^2=1,\;abc=bca=cab \ra$  & $16$\\ 
 \hline
  $9$ & ${G}'_{16}$ & $16$\\ 
 \hline
 $10$ & $\;\;\;\;\;\;\;\la a,b,c\mid a^3=b^3=c^2=1,\;ab=ba,\;(ac)^2=(bc)^2=1 \ra\;\;\;\;\;\;\;$   & $18$\\
 \hline
$11$ &  $\la a,b,c\mid a^3=c^3=1,\;ac=ca,\;bc=cb,\;b^{-1}ab=ac \ra$  & $27$\\
 \hline
$12-13$ & ${Q}_8\times C_3,\;\;\;Q_8\times C_4$ & $24, 32$\\

 \hline

\end{tabular}

\end{table}

Let us now state Babai's theorem.
\begin{theorem}{[Babai, \cite{Babai}]}\label{30} If $G$ is a group different from the cyclic groups of order 3, 4, 5 then $\alpha(G)\leq2 |G|$. 
\end{theorem}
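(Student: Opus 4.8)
The statement is Babai's bound $\alpha(G)\le 2|G|$, so the plan is to produce, for every finite group $G$ other than $C_3,C_4,C_5$, a graph $\G$ on at most $2|G|$ vertices with $\Aut(\G)\cong G$. I would first reduce to the hard cases using the GRR-Theorem (Theorem \ref{23b}): if $G$ admits a GRR then $\alpha(G)\le|G|\le 2|G|$ and we are done, so we may assume $G$ is an abelian group of exponent $>2$, a generalised dicyclic group, or one of the $13$ groups of Table \ref{table:kysymys}. The $13$ sporadic groups all have order at most $32$ and can be treated one at a time, each time exhibiting a graph within the budget (several already appear in Table \ref{table:kd}, and for instance $\mathrm{A}_4$, $G'_{16}$, $Q_8\times C_3$ satisfy $\alpha(G)<2|G|$). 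Thus the real content is the two infinite families.

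For $G$ abelian of exponent $>2$ or generalised dicyclic I would use a ``doubled'' construction on the vertex set $G\times\{0,1\}$, which has $2|G|$ vertices. Fix an inverse-closed generating set $T\subseteq G\setminus\{1\}$, a further inverse-closed set $T'$, and a cross set $S\subseteq G\setminus\{1\}$ that is \emph{not} required to be inverse-closed. Put an edge between $(x,0)$ and $(y,0)$ whenever $yx^{-1}\in T$, an edge between $(x,1)$ and $(y,1)$ whenever $yx^{-1}\in T'$, and an edge between $(x,1)$ and $(y,0)$ whenever $yx^{-1}\in S$. Right translations $(x,i)\mapsto(xg,i)$ are automorphisms, so $\rho(G)\le\Aut(\G)$ and $\Aut(\G)$ is transitive on each layer; taking $|T|\neq|T'|$ forces every automorphism to preserve the two layers (they then have different valencies). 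An automorphism fixing $(1,0)$ therefore restricts to an automorphism $\phi$ of $\te{Cay}(G,T)$ fixing $1$, and, provided $S$ has trivial right stabiliser so that the right $S$-translates $\{Sg:g\in G\}$ are pairwise distinct, the $S$-edges determine the induced permutation $\phi_1$ of the second layer; since $\phi_1$ must in turn preserve $\te{Cay}(G,T')$, the stabiliser of $(1,0)$ embeds into $\Aut(\te{Cay}(G,T))_1$ and is further cut down by the conditions imposed by $T'$ and $S$. Hence if $T,T',S$ can be chosen so that only the identity survives, then $\Aut(\G)=\rho(G)\cong G$ and $\alpha(G)\le 2|G|$.

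The crucial point is what the asymmetric cross set $S$ buys. The inversion $\iota\colon x\mapsto x^{-1}$ is an automorphism of \emph{every} Cayley graph of an abelian group of exponent $>2$ (and generalised dicyclic groups carry an analogous canonical involutory obstruction) — this is precisely why these families have no GRR — but $\iota$ maps $S$ to a set that, if $S$ is not symmetric up to translation, is not a right $S$-translate, so $\iota$ does not extend to $\G$. Choosing $S$ of size at least $3$ and translation-asymmetric kills this obstruction, and the remaining, finitely many spurious automorphisms of $\G$ (for each fixed $G$) are removed by further tuning $T,T'$. The groups $C_3,C_4,C_5$ are exactly the cases where this cannot be done: each has at most three non-identity elements, every subset is symmetric up to translation, and the budget $2|G|\in\{6,8,10\}$ leaves no room for an asymmetric $S$; one then checks directly (equivalently, from the known values of $\alpha$ for small cyclic groups) that $\alpha(G)>2|G|$ for these three groups, and that no other group fails.

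The main obstacle is the uniform existence claim of the previous paragraph: showing that for \emph{every} abelian group of exponent $>2$ and \emph{every} generalised dicyclic group, with exactly the three small cyclic exceptions, the connection sets $T,T'$ and the translation-asymmetric cross set $S$ can be chosen to leave only the identity in $\Aut(\G)_{(1,0)}$. This is where the factor $2$ is genuinely needed — it is, in effect, where the failure of the GRR property is repaired at the cost of doubling the vertex count — and it is also where the exceptional behaviour of $C_3$, $C_4$, $C_5$ must be isolated by a direct case analysis.
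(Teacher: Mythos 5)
First, a point of reference: the paper does not prove Theorem \ref{30} at all --- it is quoted from Babai's 1974 paper, and the paper merely reproduces Babai's graph (Construction \ref{con0}, two copies of $G$ with asymmetric cross-edges determined by a minimal generating set) with the remark that ``the proof appears in \cite{Babai}''; the cyclic case is delegated to Sabidussi/Arlinghaus. Your doubled construction on $G\times\{0,1\}$ with connection sets $T,T'$ and a non-inverse-closed cross set $S$ is, in shape, exactly Babai's construction: there $T=\{h_i^{-1}h_{i+1}\}^{\pm 1}$, $T'=\{h_1\}^{\pm 1}$ (or its complement, chosen to separate the valencies of the two layers), and $S=\{1,h_1,\dots,h_d\}$, whose asymmetry is precisely what kills the inversion map. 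So your intuition about where the factor $2$ comes from and why $C_3,C_4,C_5$ fail is correct. Your detour through the GRR-Theorem is logically admissible but backwards: Babai's argument is elementary and uniform over all non-cyclic groups, whereas the GRR-Theorem is a far deeper result, and invoking it does not actually shorten your remaining work since the hard families (abelian of exponent $>2$, generalised dicyclic) are exactly the ones your doubled construction must handle anyway.

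The genuine gap is that the entire content of the theorem is the sentence you defer: ``if $T,T',S$ can be chosen so that only the identity survives.'' You never exhibit such a choice, and the plausibility argument you give (asymmetric $S$ kills inversion, ``further tuning'' removes the rest) does not rule out other automorphisms of $\te{Cay}(G,T)$ fixing $1$ that happen to permute the $S$-translates compatibly; showing that the stabiliser of $(1,0)$ is trivial for a \emph{concrete, uniformly defined} choice of $T,T',S$ is where all the work lies, and it is exactly what Babai does via minimal generating sets. A second, smaller gap: your framework as stated needs $G$ non-cyclic (a minimal generating set of size $\ge 2$, or at least enough room for $T$ and $S$ to be independent), and you do not give a construction for cyclic $G$ of order $\ge 6$; the paper covers that case by citing the exact values of $\alpha(C_n)$ from \cite{arli,sabb,sabbb}, not by a doubling argument. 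As written, your text is a correct plan for rediscovering Babai's proof rather than a proof; to close it you must either instantiate $T,T',S$ explicitly and verify triviality of the stabiliser (including the cyclic case separately), or simply cite \cite{Babai} as the paper does.
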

The values of $\alpha(G)$ for cyclic groups $G$ and graph constructions can be found in \cite{arli}, which builds on the work of Sabidussi \cite{sabb}. For the non-cyclic groups, we will use the following construction, given by Babai in \cite{Babai}:
\begin{construction}{\label{con0}}
 Let $G$ be a non-cyclic group of order $|G|\geq 6$ and let $H=\{h_1,...,h_d\}$ be a minimal generating set of $G$. Let $G'$ be an isomorphic copy of $G$ with an isomorphism $g\longmapsto g'$ from $G$ to $G'$.
 We define the graphs $X_1$ and $X_3$ to be such that  
    \begin{align*}
&V(X_1)=G ,\phantom{'}\qquad E(X_1)=\big\{[gh_i,gh_{i+1}]{\mathlarger{\mid}}\;g\in G,\;i=1,...,d-1\big\}, \\
&V(X_3)=G' ,\qquad E(X_3)   =        \big\{[g'h_1',g']{\mathlarger{\mid}}\; g'\in G'\big\}.
    \end{align*}
 Let $\rho_{X_s}$ be the valency of the vertices of $X_s$, $s=1,3$. We define the graph $X_2$ to be \[X_2=\left\{\begin{array}{ll}
     X_3,\;&\;\text{     if }\rho_{X_1}\neq\rho_{X_3},\\
     \overline{X_3},\;&\;\text{     if }\rho_{X_1}=\rho_{X_3},
\end{array}\right.  \] where $\overline{X_3}$ is the complement graph of $X_3$.
     
Finally, let us define the graph $X$ such that 
 \begin{align*}&V(X)=V(X_1)\cup V(X_2), \\ &E(X)=E(X_1)\cup E(X_2)\cup\big\{\big[g',g\big],\big[g',gh_i\big]\mathlarger{\mathlarger{\mid}}\;g\in G, \;i=1,...,d\big\}.\end{align*} 
The map $g:V(X)\rightarrow V(X)$ such that
\[\;\;\;g(v)=\mathsmaller{\left\{\begin{array}{ll}
     gv,\;&\;\text{     if }v\in V(X_1),\\
     g'v,\;&\;\text{     if }v\in V(X_2),
\end{array}\right.}  \] is a graph automorphism for every $g\in G$, and $\Aut(X)\cong G$; the proof appears in \cite{Babai}.
\end{construction}

The inequality in Babai's Theorem \ref{30} does not hold for the three cyclic groups excluded.
\begin{example}{\label{opq}}
We will see shortly (Proposition \ref{27b}) that $\alpha(C_4)=10$.
A graph on 10 vertices that has automorphism group isomorphic to $C_4$ is shown in Figure 1. In particular, the automorphism group of this graph can be realised as the subgroup $\la b\ra$ of $S_{10}$, where $b=(1\; 2)(3\; 4\; 5\; 6)(7\; 8\; 9\;10)$
(\cite[Lemma 2.1.3.3.]{jess}).
\end{example}
\begin{figure}{\label{figure 1.}}
\centering
\begin{tikzpicture}
  [scale=.73,auto=left,every node/.style={circle,fill=blue!20}]
  \node (n3) at (1,10) {3};
  \node (n6) at (1,2)  {6};
  \node (n4) at (10,10)  {4};
  \node (n5) at (10,2) {5};
  \node (n7) at (3,8)  {7};
  \node (n8) at (8,8)  {8};
    \node (n10) at (3,4) {10};
  \node (n9) at (8,4)  {9};
      \node (n1) at (4.15,5) {1};
  \node (n2) at (6.85,5)  {2};

  \foreach \from/\to in {n1/n3,n1/n7,n1/n5,n1/n9,n2/n4,n2/n6,n2/n8,n2/n10,n3/n4,n3/n7,n3/n8,n3/n6,n4/n3,n4/n5,n4/n9,n4/n2,n4/n8,n5/n6,n5/n10,n5/n9,n6/n7,n6/n10}
    \draw (\from) -- (\to);
\end{tikzpicture}
\caption{}
\end{figure}

 \section{Proof of Theorem \ref{1}: abelian groups}
The aim of this section is to prove that Theorem \ref{1} holds for every abelian group $G$. 
\begin{proposition}\label{27d}
Let $G$ be an abelian group. Then one of the following holds: 
\begin{enumerate}[$i)$]
    \item $\alpha(G)\leq |G|,$ 
    \item$G$ is cyclic of order $p^k$ or $2p$ for some prime number $p$ $(|G|\neq2)$, 
    \item $G$ is one of the 10 abelian groups shown in Table \ref{table:kd}.
\end{enumerate}
  If $ii)$ or $iii)$ is true then $\alpha(G)>|G|.$
\end{proposition}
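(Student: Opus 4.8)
The plan is to reduce the statement to the known classification of $\alpha(G)$ for abelian groups due to Arlinghaus \cite{arli}. First I would recall that, by the GRR-Theorem (Theorem \ref{23b}) and its corollary, every abelian group of exponent at most $2$ (that is, every group $C_2^k$) either admits a GRR---hence has $\alpha(G)\le|G|$---or is one of $C_2$, $C_2\times C_2$, $C_2\times C_2\times C_2$, $C_2\times C_2\times C_2\times C_2$; and among these only small cases need separate attention, since for $C_2^k$ with $k$ large one can exhibit an explicit graph on at most $2^k$ vertices (indeed $C_2$ is realised by $K_2$ on $2$ vertices, and the three exceptional elementary abelian $2$-groups from Table \ref{table:kysymys} are easily checked to satisfy $\alpha\le|G|$ via ad hoc constructions). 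So the case $\exp(G)\le 2$ contributes nothing to the exception lists $ii)$ and $iii)$.

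\textbf{The main case.} For an abelian group $G$ of exponent greater than $2$, the GRR-Theorem gives no graph on $|G|$ vertices directly, so here I would invoke Arlinghaus's formula for $\alpha(G)$ in terms of the invariant factor (or primary) decomposition of $G$. Writing $G\cong C_{n_1}\times\cdots\times C_{n_t}$ appropriately, the formula expresses $\alpha(G)$ and one simply has to carry out the arithmetic comparison $\alpha(G)$ versus $|G|=\prod n_i$. The key structural point is that $\alpha$ is, up to small corrections, additive over a direct-product decomposition into coprime or suitably independent cyclic pieces, so that having at least two nontrivial factors, or a single cyclic factor whose order is \emph{not} a prime power and \emph{not} twice a prime, forces $\alpha(G)\le|G|$. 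I would organise the verification as: (a) $G$ cyclic: then $\alpha(G)>|G|$ exactly when $|G|=p^k$ or $|G|=2p$ with the documented exceptions (and $|G|\neq 2$), which is precisely alternative $ii)$ and is recorded in Propositions \ref{27} and \ref{27b}; the borderline cyclic groups with $|G|$ not of that form but still small---$C_{12},C_{15},C_{20},C_{21}$---give equality-type behaviour $\alpha(G)>|G|$ and land in Table \ref{table:kd}; (b) $G$ non-cyclic with $\exp(G)>2$: Arlinghaus's formula yields $\alpha(G)\le|G|$ for all but finitely many such $G$, and a finite search through the small non-cyclic abelian groups (order $\le 25$, say) isolates exactly $C_2\times C_4$, $C_3\times C_3$, $C_4\times C_4$, $C_5\times C_5$, $C_2\times C_2\times C_3$, $C_2\times C_3\times C_3$ as the ones with $\alpha(G)>|G|$, which are the remaining rows of Table \ref{table:kd}.

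\textbf{Assembling the dichotomy.} Combining (a) and (b): if $G$ is abelian and none of the listed exceptions occurs, then either $G$ has exponent $\le 2$ and admits a GRR (or is tiny and handled directly), or $G$ has exponent $>2$ and Arlinghaus's formula gives $\alpha(G)\le|G|$; in either case $i)$ holds. Conversely, for each group appearing under $ii)$ or $iii)$ I would cite the explicit value of $\alpha(G)$ (from Propositions \ref{27}, \ref{27b} for the cyclic prime-power and $2p$ cases, and from \cite{arli} together with Table \ref{table:kd} for the ten sporadic abelian groups) and observe that it strictly exceeds $|G|$. This establishes the final clause ``If $ii)$ or $iii)$ is true then $\alpha(G)>|G|$'' and completes the proof.

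\textbf{Main obstacle.} The only real work is the bookkeeping in step (b): one must be confident that Arlinghaus's formula is being applied correctly across all the relevant small decompositions and that the finite exceptional list is complete---in particular, that no non-cyclic abelian group of order slightly above $25$ sneaks in. I expect this to require a careful but routine case check against the formula, and the genuine mathematical content is entirely imported from \cite{arli}; the rest is assembly and citation.
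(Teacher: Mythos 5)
Your outline follows essentially the same route as the paper: reduce everything to Arlinghaus's determination of $\alpha$ for abelian groups, use the (sub)additivity $\alpha(\prod C_{q_i})\le\sum\alpha(C_{q_i})$ together with the closed formula for $\alpha(C_{p^r})$ to show $\alpha(G)\le|G|$ outside a finite range, and finish with a table lookup for the small cases. The one substantive caveat is that the step you defer as ``careful but routine bookkeeping'' is in fact the entire content of the paper's proof: to certify that no exception of order above $25$ ``sneaks in,'' the paper first extracts the uniform bounds $\alpha(C_{p^r})\le 3p^r$, $\alpha(C_{p^r})\le 2p^r$ for $p^r\ge 7$, and $\alpha(C_{p^r})\le p^r+\max\{6,2p\}$ from Proposition \ref{27b}, then argues by the number $s$ of primary cyclic factors (a dedicated lemma for $s=2$, explicit estimates for $s=3,4$, and the inequality $3s<2^{s-1}$ for $s\ge 5$), with Table \ref{iii} covering only the residual small parameter ranges. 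Without some such quantitative argument your claim that the exceptional list is complete is asserted rather than proved, so you should either reproduce these estimates or cite a completeness statement from \cite{arli} directly. A minor cosmetic difference: you route the exponent-$2$ groups through the GRR-Theorem, whereas the paper treats them uniformly inside the same primary-decomposition induction (e.g.\ $\alpha(C_2^4)\le 4\cdot\alpha(C_2)=8<16$), which avoids any appeal to GRRs in the abelian case; both work.
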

The value of $\alpha(G)$ was determined for every cyclic group $G$ by Sabidussi \cite{sabb,sabbb}, when $|G|$ is a prime number, and by Meriwether (unpublished, see \cite{sabbb}), in general. However, Arlinghaus \cite{arli} was the first to present an algorithm to compute $\alpha(G)$ when $G$ is cyclic or, more generally, abelian. Table \ref{iii} contains the value of $\alpha(G)$ for some small abelian groups, which we computed using Arlinghaus' algorithm \cite[Theorem 8.1]{arli}.

\begin{proposition}\label{27}\cite[Theorem 8.1]{arli}
Consider the abelian group $G= C_{q_1}\times C_{q_2}\times\cdots \times C_{q_s}$, where $q_i$ is a prime power, $i=1,...,s$. Then,
\begin{equation}\label{10.11}\alpha(G)\leq \alpha(C_{q_1})+\alpha(C_{q_2})+\cdots +\alpha(C_{q_s}),\end{equation}
\vspace{-7 mm}
\begin{equation}\label{popa}
    \alpha(C_{2}\times C_{q_2})=2+\alpha(C_{q_2}).\;\;\end{equation}
\end{proposition}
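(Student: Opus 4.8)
The statement is \cite[Theorem~8.1]{arli}; I outline how its two parts are obtained, and where the difficulty lies. Inequality \eqref{10.11} is an upper bound obtained by gluing together minimal graphs of the cyclic factors, and the equality \eqref{popa} combines that upper bound with a matching lower bound: the $\leq$ direction of \eqref{popa} is the case $q_1=2$ of \eqref{10.11} together with $\alpha(C_2)=2$ (realised by $K_2$), so the new content is the lower bound $\alpha(C_2\times C_q)\geq\alpha(C_q)+2$.

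For \eqref{10.11}, the plan is to build a single graph on $\alpha(C_{q_1})+\cdots+\alpha(C_{q_s})$ vertices realising $C_{q_1}\times\cdots\times C_{q_s}$. Choose a minimal graph $\Gamma_i$ with $\Aut(\Gamma_i)\cong C_{q_i}$ for each $i$; the automorphism group of the disjoint union $\Gamma_1\sqcup\cdots\sqcup\Gamma_s$ contains $\prod_i\Aut(\Gamma_i)\cong\prod_i C_{q_i}$, with equality exactly when no connected component of $\Gamma_i$ is isomorphic to a connected component of $\Gamma_j$ for $i\neq j$. The structural input is that a minimal graph of $C_{p^k}$ with $p^k\geq 3$ is connected --- otherwise its automorphism group, being that of a disconnected graph, is a nontrivial direct product of wreath products $\Aut(T)\wr S_{m_T}$ over the isomorphism types $T$ of its components, and for such a product to equal a cyclic group of prime-power order $\geq 3$ the graph must contain a rigid component that can be removed without affecting the automorphism group, contradicting minimality. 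Hence the only possible obstruction is a repeated factor $q_i=q_j$, which I would resolve by inserting, between the copies of the common minimal graph, a $(\prod_i C_{q_i})$-invariant edge set laid out according to a fixed rigid pattern on the copies, so that no coordinate permutation preserves it --- and when only two copies occur and the minimal graph is not self-complementary, simply replacing one copy by its complement suffices; either way no further vertices are used. The point requiring care is to check that these linkages introduce no spurious automorphisms, so that the automorphism group of the final graph is exactly $\prod_i C_{q_i}$.

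For the lower bound in \eqref{popa}, let $\Gamma$ be any graph with $\Aut(\Gamma)=\langle t\rangle\times\langle s\rangle$, where $t$ has order $2$ and $s$ has order $q=p^k$, and put $n=|V(\Gamma)|$. Since $t\neq 1$ it moves at least two vertices, so the fixed-point set $\mathrm{Fix}(t)=\{v\in V(\Gamma):tv=v\}$ has $|\mathrm{Fix}(t)|\leq n-2$; and since $s$ commutes with $t$, the group $\langle s\rangle$ acts on $\mathrm{Fix}(t)$, hence on the induced subgraph $\Gamma[\mathrm{Fix}(t)]$. The plan is to extract from $\Gamma$ a graph on at most $n-2$ vertices whose automorphism group is exactly $C_q$; this yields $\alpha(C_q)\leq n-2$, hence $\alpha(C_2\times C_q)\geq\alpha(C_q)+2$, matching the upper bound. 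The main obstacle --- and the technical heart of the proposition --- is that $\Gamma[\mathrm{Fix}(t)]$ need not do the job: the $\langle s\rangle$-action on $\mathrm{Fix}(t)$ can be unfaithful, and then its kernel contains the unique subgroup of order $p$, forcing $\mathrm{Fix}(t)\subseteq\mathrm{Fix}(s^{q/p})$; and even when the action is faithful, $\Gamma[\mathrm{Fix}(t)]$ may acquire extra automorphisms. Handling this calls for a case analysis according to the fixed-point structure of the subgroups of $C_q$ on $\Gamma$, replacing $\Gamma[\mathrm{Fix}(t)]$ --- when it fails --- by a graph obtained by folding the $2$-orbits of $t$ together with a bounded number of bookkeeping gadgets, and checking in each branch that the vertex count stays at most $n-2$ while the automorphism group is $C_q$. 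This is precisely the case analysis carried out in the algorithmic proof of \cite[Theorem~8.1]{arli}, which I would follow.
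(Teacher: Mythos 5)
The paper does not prove this proposition: it is imported verbatim from Arlinghaus \cite[Theorem 8.1]{arli} and used as a black box, so there is no in-paper argument to measure yours against, and your final deferral to \cite{arli} is consistent with what the paper itself does. Read as a proof, however, your outline has a step that genuinely fails and a second half that is only a plan.

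For (\ref{10.11}), the disjoint-union strategy with ``rigid linkages and no further vertices'' breaks down as soon as a prime power repeats more than once. Take $G=C_2\times C_2\times C_2$: each factor's minimal graph is $K_2$, so your construction must live on the six vertices of $K_2\sqcup K_2\sqcup K_2$ with the three coordinate involutions acting by swapping the endpoints of each matching edge. Any edge set between two distinct copies that is invariant under this $C_2\times C_2\times C_2$ is empty or complete bipartite, so every invariant supergraph is the matching plus complete joins between some pairs of copies; checking the four cases (no join gives $C_2\wr S_3$, one join gives $C_4\sqcup K_2$ with group of order $16$, two joins leave the two outer copies interchangeable, three joins give $K_6$) shows every such graph has automorphism group of order at least $16$. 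So no rigid invariant linkage exists on those six vertices, even though $\alpha(C_2^{\,3})=6$; the bound (\ref{10.11}) is correct but is not obtained by decorating a disjoint union of minimal graphs, and producing the right graph on exactly $\alpha(C_{q_1})+\cdots+\alpha(C_{q_s})$ vertices is the actual content of the theorem. (Your connectivity claim for minimal graphs of $C_{p^k}$ is also asserted rather than proved, though the two-factor complement trick you mention does handle cases like $C_2\times C_2$ and $C_3\times C_3$.) For the lower bound in (\ref{popa}) you propose restricting to $\mathrm{Fix}(t)$, immediately list the ways this fails (unfaithful $\langle s\rangle$-action, spurious automorphisms of the induced subgraph), and defer the repair to \cite{arli}; that is a statement of the difficulty, not a resolution of it. As a gloss on the cited theorem your write-up is reasonable, but it is not a self-contained proof.
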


\begin{proposition}\label{27b}\cite[Theorem 5.4]{arli}
Let $p$ be a prime number and $r$ be a positive integer. Then \vspace{-1.5 mm}
\[
\alpha(C_{p^r})=\left\{ 
\begin{array}{ll}
     2,\;&\;\text{if }p^r=2,\\
     p^r+2p, \;&\; \text{if }p=3,5, \\
     p^r+6, \;&\;\text{if }p=2,\;r\geq 2,\\
     p^r+p, \;&\;\text{if }p\geq7.
     
\end{array} 
\right.  \thinspace \thinspace \thinspace \thinspace \thinspace \thinspace \thinspace \thinspace \thinspace \thinspace \thinspace \thinspace \thinspace \thinspace \thinspace \thinspace \thinspace \thinspace \thinspace \thinspace 
\]

\end{proposition}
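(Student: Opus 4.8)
The plan is to prove the two inequalities $\alpha(C_{p^r})\le f(p,r)$ and $\alpha(C_{p^r})\ge f(p,r)$ separately, where $f(p,r)$ denotes the claimed value, handling the four ranges of $(p,r)$ in parallel. The case $p^r=2$ is immediate, since $K_2$ has automorphism group $C_2$; so assume throughout that $p^r>2$. The starting point for the lower bound is the GRR-Theorem (Theorem \ref{23b}): if a graph $\G$ had $\Aut(\G)\cong C_{p^r}=\la\sigma\ra$ on exactly $p^r$ vertices, then $\sigma$ would act faithfully on $p^r$ points with orbits of $p$-power size summing to $p^r$, forcing a single orbit of size $p^r$ on which $\la\sigma\ra$ acts regularly; thus $\G$ would be a GRR of $C_{p^r}$, which cannot exist as $C_{p^r}$ is abelian of exponent greater than $2$. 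Hence $\alpha(C_{p^r})\ge p^r+1$, and the real work is to improve this to $p^r+p$ (resp.\ $p^r+6$ when $p=2$, resp.\ $p^r+2p$ when $p=3,5$).

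For the lower bound I would take a graph $\G$ with $\Aut(\G)\cong C_{p^r}=\la\sigma\ra$, decompose $V(\G)$ into $\sigma$-orbits — all of $p$-power size, with at least one orbit $A$ of size exactly $p^r$ — and examine the permutations of $A$ that extend to automorphisms of $\G$. Since the $\la\sigma\ra$-invariant subgraph on $A$ is a circulant $\te{Cay}(\mathbb{Z}_{p^r},S)$ with $S$ inverse-closed, the permutations of $A$ preserving it form a group containing the dihedral group $\la\sigma,-1\ra$ of order $2p^r$, and possibly more via the multipliers in $\mathbb{Z}_{p^r}^{\times}$ (a group of order $p^{r-1}(p-1)$) that stabilize $S$. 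Because $\Aut(\G)$ is cyclic and acts regularly on $A$, \emph{none} of these nontrivial permutations — in particular not the reflection $x\mapsto -x$ — can extend to $\G$; so the $\sigma$-orbits other than $A$, together with the $\la\sigma\ra$-invariant data connecting them to $A$ and to each other, must obstruct all of them at once. The crux is an extremal estimate showing this is impossible with fewer than $p$ extra vertices (resp.\ $6$ when $p=2$, resp.\ $2p$ when $p=3,5$): with that few vertices the available invariant connection data is too impoverished to pin the residual symmetry down to $C_{p^r}$, and this is exactly where the different values of $f(p,r)$ come from, the dichotomy between $p\ge 7$ and the small primes being governed by how much room the multiplier group $\mathbb{Z}_{p^r}^{\times}$ leaves once the unavoidable reflection is accounted for. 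This extremal case analysis — carried out by Arlinghaus' algorithm \cite[Theorem 5.4]{arli}, refining Sabidussi \cite{sabb,sabbb} — is the step I expect to be the main obstacle.

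For the upper bound I would, in each of the four cases, exhibit an explicit graph on exactly $f(p,r)$ vertices with automorphism group $\cong C_{p^r}$. The natural template is a vertex set built from a $\sigma$-orbit of size $p^r$ together with a small extra part — of total size $p$, $6$, or $2p$ as appropriate — carrying further $\sigma$-orbits and/or $\sigma$-fixed vertices; one puts a circulant on the large orbit, a suitable structure on the extra part, and $\la\sigma\ra$-equivariant connections arranged so that valencies separate the parts (as in Construction \ref{con0}) while the remaining adjacencies force every automorphism to act as a translation on the large orbit, hence to lie in $\la\sigma\ra$. The $10$-vertex graph of Example \ref{opq}, whose automorphism group is $C_4$ and whose $\sigma$-orbits have sizes $2,4,4$, realises the case $p=2$, $r=2$. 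Checking that each such graph has \emph{full} automorphism group equal to $C_{p^r}$, and not a larger group, is the routine-but-delicate verification; together with the lower bounds it gives the stated value of $\alpha(C_{p^r})$.
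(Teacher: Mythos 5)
The paper does not prove this statement at all: it is quoted as Theorem 5.4 of Arlinghaus \cite{arli} (building on Sabidussi and Meriwether), so there is no internal proof to compare yours against. Judged on its own terms, your proposal is a correct reduction and a correct diagnosis of where the difficulty sits, but it is not a proof: both halves stop exactly where the content begins. What you do establish is sound --- $K_2$ settles $p^r=2$; faithfulness of a cyclic $p$-group action forces a regular orbit (every nontrivial subgroup of $C_{p^r}$ contains the unique subgroup of order $p$, so some point stabilizer must be trivial), whence a graph on exactly $p^r$ vertices would be a GRR of $C_{p^r}$, contradicting Theorem \ref{23b}; and the reflection $x\mapsto -x$ of the circulant induced on a regular orbit cannot extend to $\G$ when $\Aut(\G)\cong C_{p^r}$, since it would fix a vertex of that orbit without being the identity there. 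But the step from $\alpha(C_{p^r})\geq p^r+1$ to $p^r+p$, $p^r+6$ or $p^r+2p$ --- the ``extremal estimate'' showing that fewer extra vertices cannot simultaneously obstruct the reflection and all surviving multipliers in $\mathbb{Z}_{p^r}^{\times}$ --- \emph{is} the theorem, and you explicitly defer it to Arlinghaus' algorithm. The same holds for the upper bound: apart from Example \ref{opq} for $C_4$, no graph on $f(p,r)$ vertices is exhibited or verified to have full automorphism group $C_{p^r}$, and the constructions are not uniform across the four cases (for $p=3,5$ a single extra orbit of size $p$ provably does not suffice, which is why the answer there is $p^r+2p$ rather than $p^r+p$; that dichotomy must itself be proved, not merely named).

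Completing your outline would amount to reproving Arlinghaus' classification, a substantial case analysis on the possible orbit structures and invariant connection data; within this paper the intended justification is simply the citation, exactly as the paper gives it. So the verdict is: no error in what you assert, but a genuine gap at the quantitative core of both the lower and the upper bound.
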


\begin{table}
    \caption{The values of $\alpha(G)$ for certain abelian groups $G$.}
    \label{iii}
   \hspace{-10 mm} \begin{minipage}{.5\linewidth}
$$G= C_{p_1^{r_1}}\times C_{p_2^{r_2}} \;\;(p_1^{r_1}{\small \leq{p_2^{r_2}})}$$
      \vspace{3 mm}
      \centering
\begin{tabular} {|c||c | c |c  |c | c |c |}
 \hline
 
\footnotesize{\diagbox[width=1.25cm, height=1.3cm]{{$p_1^{r_1}$}}{\raisebox{4.9pt}{ $p_2^{r_2}$}}}& \thinspace 2\thinspace&3&4&5&7&8 \\ 
 \hline\hline
 2&4&11&12&17&16&16  \\
 \hline 
 3& &18&18&21&23&22 \\ 
 \hline 
 4 & & & 20&25&24&24  \\ 
 \hline
5 & & & &30&29&29 \\ 
 \hline

\end{tabular}
    \end{minipage}%
    \begin{minipage}{.5\linewidth}
      \centering
        $$\;\;\;\;\;\;\;\;\;\;\;\;\;\;\;\;\;G= C_2\times C_{p_2^{r_2}}\times C_{p_3^{r_3}}  \;\;(p_2^{r_2}{\small \leq{p_3^{r_3}})\;}$$
\begin{tabular} {|c||c | c |c  |c | c |c |c | c |c |}
 \hline
 
\footnotesize{\diagbox[width=1.25cm, height=1.3cm]{{$p_2^{r_2}$}}{\raisebox{4.9pt}{ $p_3^{r_3}$}}}& \thinspace 2\thinspace&3&4&5&7&8&9&11&13 \\ 
 \hline\hline
 2&6&13&14&19&18&18&19&26&30  \\
 \hline 
 3& &20&20&23&25&24& 23&33&37 \\ 
 \hline 
 4 & & & 22&27&26&26&26&34&38  \\ 
 \hline

\end{tabular}
\vspace{9.5 mm}
    \end{minipage} 
\end{table}

Proposition \ref{27b} gives rise to the following inequalities that are essential for the proof of Proposition \ref{27d}: 
 \begin{equation}\label{ararar}
   \alpha(C_{p^r})\leq 3p^r,\;\;\;\;\;\;\;\;\;\; \;\;\;\;\;\;\;\;\;\;\; \;\;\;\;\;\;\;\;\;\end{equation}
   \vspace{-6 mm}
   \begin{equation}\label{arara}
\alpha(C_{p^r})\leq 2p^r,  \;\;\; \text{ if } p^r\geq 7,\;\;\;\;\;\;\;\;\;\;\; 
    \end{equation}
       \vspace{-6 mm}
    \begin{equation}\label{ararat}
  \alpha(C_{p^r})\leq p^r+\te{max}\big\{6, 2p\big\}.  \;\;\;\;   \;\;\;\;\;
    \end{equation}

 In preparation for proving Proposition \ref{27d}, we establish the following lemma.
\begin{lemma}\label{09e} Proposition \ref{27d} holds when 
$G$ is a direct product of two cyclic groups of prime-power order.
\end{lemma}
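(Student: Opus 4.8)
The plan is to reduce to a finite computation by combining the abelian $\alpha$-formulas of Propositions \ref{27}, \ref{27b} with the subadditivity inequality \eqref{10.11} and the exact formula \eqref{popa} for factors of $C_2$. Write $G=C_{p_1^{r_1}}\times C_{p_2^{r_2}}$ with $p_1^{r_1}\le p_2^{r_2}$, set $n=|G|=p_1^{r_1}p_2^{r_2}$, and put $a_i=\alpha(C_{p_i^{r_i}})$. If $G$ is cyclic of prime-power order, then $G$ falls under case $ii)$ of Proposition \ref{27d} and there is nothing to prove (its $\alpha$-value being recorded in Proposition \ref{27b}); so assume $G$ is not cyclic of prime-power order. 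The goal is to show that either $n\ge a_1+a_2$ (whence $\alpha(G)\le n$ by \eqref{10.11}, giving case $i)$), or $G$ is one of the finitely many groups in Table \ref{table:kd} (case $iii)$), and in the latter situation the claimed value of $\alpha(G)$ is what Arlinghaus' algorithm outputs, as tabulated in Table \ref{iii}.

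First I would dispose of the case $p_1^{r_1}=2$. Here $G=C_2\times C_{p_2^{r_2}}$ with $p_2^{r_2}\ge 3$ (the case $p_2^{r_2}=2$ being $C_2\times C_2$, which is abelian of exponent $2$ and handled by its GRR-status — actually it appears in Table \ref{table:kysymys}, but it satisfies $\alpha\le|G|$ trivially, e.g.\ $\alpha(C_2\times C_2)=4$; in any event it is not a prime-power cyclic group so it must land in case $i)$, and indeed $4\le 4$). By \eqref{popa}, $\alpha(G)=2+\alpha(C_{p_2^{r_2}})$, and one compares $2+a_2$ with $n=2p_2^{r_2}$ using Proposition \ref{27b}: the inequality $2+a_2\le 2p_2^{r_2}$ holds precisely when $a_2\le 2p_2^{r_2}-2$, which by the four cases of Proposition \ref{27b} fails only for the small values $p_2^{r_2}\in\{3,4,5,9\}$ (giving $C_2\times C_3$, $C_2\times C_4$, $C_2\times C_5$, $C_2\times C_9$), and for each of these one reads off the exact value from \eqref{popa} and checks it against Table \ref{table:kd}/Table \ref{iii}.

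Next, for $p_1^{r_1}\ge 3$, I would run the same comparison with the weaker bound \eqref{10.11}: we want $a_1+a_2\le n=p_1^{r_1}p_2^{r_2}$. Using the bound $a_i\le 3p_i^{r_i}$ from \eqref{ararar}, and the sharper $a_i\le 2p_i^{r_i}$ from \eqref{arara} when $p_i^{r_i}\ge 7$, a short case analysis on the sizes of $p_1^{r_1}$ and $p_2^{r_2}$ shows that $a_1+a_2\le n$ holds outside a bounded range of parameters. Concretely: if $p_1^{r_1}\ge 7$ then $a_1+a_2\le 2p_1^{r_1}+2p_2^{r_2}\le p_1^{r_1}p_2^{r_2}$ since $p_2^{r_2}\ge p_1^{r_1}\ge 7$ forces $p_1^{r_1}p_2^{r_2}-2p_1^{r_1}-2p_2^{r_2}=(p_1^{r_1}-2)(p_2^{r_2}-2)-4\ge 5\cdot 5-4>0$; if $p_1^{r_1}\in\{3,4,5\}$ then $a_1\le 3p_1^{r_1}\le 15$ and $a_2\le \max\{3p_2^{r_2},2p_2^{r_2}\}$, so the inequality can fail only when $p_2^{r_2}$ is also small, leaving finitely many pairs $(p_1^{r_1},p_2^{r_2})$ with both entries in, say, $\{3,4,5,7,8,9,\dots\}$ up to an explicit cutoff. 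For each of these finitely many exceptional pairs — and this is exactly the content of Table \ref{iii} — I would invoke Arlinghaus' algorithm \cite[Theorem 8.1]{arli} to compute $\alpha(G)$ exactly, observe that the groups for which the computed value exceeds $n$ are precisely those listed in Table \ref{table:kd}, and that for the rest $\alpha(G)\le n$ as required.

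The main obstacle is not conceptual but bookkeeping: one must be careful that the crude bounds \eqref{ararar}--\eqref{arara} genuinely confine the exceptions to the finite list that Table \ref{iii} covers, i.e.\ that no pair $(p_1^{r_1},p_2^{r_2})$ slips through with $a_1+a_2>n$ yet lies outside the tabulated range. This requires checking the boundary cases (e.g.\ $p_1^{r_1}=3$ against $p_2^{r_2}=7,8,9,11,\dots$, and $p_1^{r_1}=4,5$ similarly) by hand with the exact formulas of Proposition \ref{27b} rather than the crude inequalities, to confirm that $\alpha(G)\le n$ kicks in early enough. Once that finite verification is in place, the identification of the exceptional groups with the abelian entries of Table \ref{table:kd}, and the matching of their $\alpha$-values with Table \ref{iii}, completes the proof of the lemma.
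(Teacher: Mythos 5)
Your overall strategy is the same as the paper's: combine the subadditivity inequality (\ref{10.11}) and the exact formula (\ref{popa}) with the explicit values from Proposition \ref{27b} to show $\alpha(G)\leq |G|$ outside a bounded range of parameters, then settle the remaining cases by the tabulated values of Arlinghaus' algorithm. The case $p_1^{r_1}\geq 7$ is handled correctly and the reduction for $p_1^{r_1}\in\{3,4,5\}$ is the right idea, even if you leave the cutoff vague where the paper pins it down (e.g.\ $\alpha(G)\leq 9+2p_2^{r_2}\leq 3p_2^{r_2}$ once $p_2^{r_2}\geq 9$).

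There is, however, a genuine error in your treatment of $p_1^{r_1}=2$, and it infects the framing of the whole argument. You assert that $2+\alpha(C_{p_2^{r_2}})\leq 2p_2^{r_2}$ "fails only for the small values $p_2^{r_2}\in\{3,4,5,9\}$" and that the exceptions form a finite list inside Table \ref{table:kd}. Both claims are false. By Proposition \ref{27b}, for every prime $p\geq 7$ one has $\alpha(C_2\times C_p)=2+\alpha(C_p)=2+2p>2p$, so the inequality fails for \emph{every} prime $p_2^{r_2}=p$, giving the infinite family $C_2\times C_p\cong C_{2p}$; these groups are not in Table \ref{table:kd} but fall under case $ii)$ of Proposition \ref{27d} (cyclic of order $2p$), a case your stated dichotomy "either $\alpha(G)\leq n$ or $G$ is in Table \ref{table:kd}" omits entirely. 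Conversely, $9$ does not belong on your failure list: $\alpha(C_2\times C_9)=2+(9+6)=17\leq 18$. The fix is not deep -- you must separate out $|G|=2p$ as the paper does, observing $\alpha(C_{2p})=2+2p>|G|$ and assigning it to case $ii)$, and then verify $2+\alpha(C_{p_2^{r_2}})<2p_2^{r_2}$ for $r_2>1$, $p_2^{r_2}>8$ -- but as written your case analysis both misses an infinite family of exceptions and wrongly includes a non-exception, so the proof does not close.
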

\begin{proof} Let $G=C_{p_1^{r_1}}\times C_{p_2^{r_2}}$ for some prime powers $p_1^{r_1}, p_2^{r_2}$ such that $p_1^{r_1}\leq p_2^{r_2}$.
Using Table \ref{iii} we deduce that the Lemma holds when $p_1^{r_1}\leq 5$ and $  p_2^{r_2}\leq 8$. If $p_1^{r_1}>5 $ then the inequalities (\ref{10.11}) and (\ref{arara}) imply that
\begin{equation*}
\alpha(G) \leq\alpha(C_{p_1^{r_1}})+\alpha(C_{p_2^{r_2}})\leq 2p_1^{r_1}+2p_2^{r_2}\leq 4p_2^{r_2}<|G|.\end{equation*}
Hence we make the assumption that $p_1^{r_1}\leq 5$ and $p_2^{r_2}>8$.

If $|G|=2p_2$ then (\ref{popa}) and Proposition \ref{27b} imply that \hspace{1 mm}$\alpha(C_{2p_2})= 2+2p_2$,\hspace{1 mm} thus $\alpha(G)> |G|$. 
On the other hand, if $|G|=2p_2^{r_2}$ and $r_2>1$, then the inequality
\begin{equation}\label{nki} 2+\te{max}\big\{6, 2p_2\big\}< p_2^{r_2}\end{equation}
holds;
indeed, we assumed that $p_2^{r_2}>8$, so (\ref{nki}) holds in case $p_2=2$; if $p_2\geq 3$ then $2+2p_2<3p_2\leq p_2^{r_2}$.
It follows from (\ref{popa}),
(\ref{ararat}) and (\ref{nki}) that 
\[\alpha(C_{2p_2^{r_2}})\leq 2+p_2^{r_2}+\te{max}\big\{6, 2p_2\big\}< 2p_2^{r_2}=|G|.\]

If $p_1^{r_1}=3$ then by (\ref{10.11}), (\ref{arara}) and Proposition \ref{27b}, we get that $\alpha(G)\leq 9+2p_2^{r_2}\leq 3p_2^{r_2}=|G|$.

Finally, if $4\leq p_1^{r_1}\leq 5$ then it is implied by (\ref{10.11}), (\ref{arara}) and Proposition \ref{27b} that
\[\vspace{-8 mm}\alpha(G)\leq 15+ 2p_2^{r_2}< 4p_2^{r_2}\leq |G|.\] \end{proof}

{\textit{Proof of Proposition \ref{27d}}.} If $|G|=1$ then $\alpha(G)=|G|$.
 Let $G=C_{p_1^{r_1}}\times C_{p_2^{r_2}}\times \cdots\times C_{p_s^{r_s}}$, where $p_1^{r_1}\leq \cdots \leq p_s^{r_s}$ are prime powers.

If $s=1$ or $s=2$ then the statements in Proposition \ref{27d} hold for $G$ as a consequence of Proposition \ref{27b} or
Lemma \ref{09e}, respectively.

Let $s=3$. If $p_1^{r_1}p_2^{r_2}\geq 9$, then using inequalities (\ref{10.11}) and (\ref{ararar}) we conclude that \[\alpha(G)\leq 3p_1^{r_1}+3p_2^{r_2}+3p_3^{r_3}\leq 9p_3^{r_3}\leq |G|.\]
Assume now that $p_1^{r_1}p_2^{r_2}< 9;$ thus, $p_1^{r_1}=2$ and $2\leq p_2^{r_2}\leq 4$. If $p_3^{r_3}<16$ then using Table \ref{iii} we verify that the claim in Proposition \ref{27d} holds for $G$. If $p_3^{r_3}\geq 16$ instead, then (\ref{10.11}) and (\ref{ararar}) together with Proposition \ref{27b} imply that
\[\alpha(G)\leq 2+3p_2^{r_2}+3p_3^{r_3}< 4p_3^{r_3}\leq |G|.\]

Let $s=4.$ If $p_1^{r_1}p_2^{r_2}p_3^{r_3}\geq 12$, then by (\ref{10.11}) and (\ref{ararar}) we have \[\alpha(G)\leq 3p_1^{r_1}+3p_2^{r_2}+3p_3^{r_3}+3p_4^{r_4}\leq 12p_4^{r_4}\leq |G|.\] Otherwise $p_1^{r_1}=p_2^{r_2}=p_3^{r_3}=2$, in which case (\ref{10.11}), (\ref{ararar}) and Proposition \ref{27b} show that \vspace{-3 mm}
\[\alpha(G)\leq 2+2+2+3p_4^{r_4}< 8p_4^{r_4}= |G|.\]

Finally, let us assume that $s\geq 5.$ Then, using (\ref{10.11}) and (\ref{ararar})
we conclude that
\[\alpha(G)\leq3p_1^{r_1}+3p_2^{r_2}+\cdots +3p_s^{r_s}\leq 3s{p_s}^{r_s}.\]
Furthermore, since $3s<2^{s-1}$ and $p_i^{r_i}\geq 2$ for every $i\in \{1,...,s-1\}$, we have that
\[3s{p_s}^{r_s}< 2^{s-1}{p_s}^{r_s}\leq {p_1}^{r_1}{p_2}^{r_2}\cdots{p_s}^{r_s}=|G|.\] Hence $\alpha(G)<|G|.$ \vspace{-8 mm}
\begin{flushright}
$\square$
\end{flushright}

\section{Proof of Theorem \ref{1}: the bound $\alpha(G)\leq |G|$}

In this section we prove the bound $\alpha(G)\leq |G|$ for groups $G$ that are non-abelian and do not satisfy $iii),iv)$ in Theorem \ref{1}, as summarised in the following theorem.

\begin{theorem}{\label{3}}
Let $G$ be a non-abelian group such that
\begin{enumerate}[$i)$]
     \item$G$ is not a generalised quaternion group,
     \item $G$ is not a generalised dicyclic group of the form $Q_{2^r}\times C_2$,
    \item$G$ is not one of the groups shown in Table \ref{table:kd}.
\end{enumerate}
Then $\alpha(G)\leq |G|$.

\end{theorem}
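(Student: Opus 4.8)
The plan is to reduce Theorem \ref{3} to the classification provided by the GRR-Theorem (Theorem \ref{23b}) and its corollary, and then handle the remaining finite list of exceptional groups by explicit graph constructions. First I would observe that if $G$ is a non-abelian, non-generalised-dicyclic group which is not among the 13 exceptions in Table \ref{table:kysymys}, then $G$ admits a GRR, hence $\alpha(G)\leq |G|$ immediately, and there is nothing to prove. So the real content is in two disjoint cases: (a) $G$ is non-abelian generalised dicyclic but not of the form $Q_{2^r}$ or $Q_{2^r}\times C_2$; (b) $G$ is one of the 13 groups in Table \ref{table:kysymys} but not one of the groups excluded in hypotheses $i)$--$iii)$ of Theorem \ref{3} (that is, not $A_4$, not $G'_{16}$, not $Q_8\times C_3$).

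For case (b) the list is finite and small, so I would simply exhibit, for each such group $G$, a graph on at most $|G|$ vertices with automorphism group isomorphic to $G$. Concretely the groups to treat are $D_6$, $D_8$, $D_{10}$ (dihedral), the group $\langle a,b,c\mid a^2=b^2=c^2=1,\,abc=bca=cab\rangle$ of order $16$, the order-$18$ group $\langle a,b,c\mid a^3=b^3=c^2=1,\,ab=ba,\,(ac)^2=(bc)^2=1\rangle$, the order-$27$ group $\langle a,b,c\mid a^3=c^3=1,\,ac=ca,\,bc=cb,\,b^{-1}ab=ac\rangle$, and $Q_8\times C_4$. For the dihedral groups this is classical (the values of $\alpha(D_{2m})$ are known from \cite{gl1,gl,hag,djmc}, and in each of these three cases $\alpha\le |G|$). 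For the four larger groups I would build the graphs by hand: take a suitable Cayley-type graph or a small modification of one (adding gadgets of distinct valencies to break unwanted symmetries while leaving the $G$-action intact), and then verify $\Aut\cong G$ — the verification reduces to checking that no vertex-orbit admits extra symmetry and that the rigidifying gadgets are pairwise non-isomorphic as rooted subgraphs. This is the part where the bulk of the case-by-case work lies, but each individual check is routine and bounded in size.

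For case (a), the generalised dicyclic groups, I expect the main conceptual work. Write $G=\Dic(A,b^2)$ with $A$ abelian containing an element of order $2k$, $k\ge 2$. The subgroup $A$ has index $2$ in $G$ and $G\setminus A=Ab=Ab^{-1}$ consists of elements of order $4$. The standard idea is to start from a Cayley graph $\mathrm{Cay}(G,S)$ for a well-chosen inverse-closed $S$ and then break the surplus automorphisms. The generic obstruction to a GRR here is the automorphism of $G$ inverting $A$ and fixing the coset $Gb$ pointwise-ish (more precisely, the automorphism $\iota$ with $\iota(a)=a^{-1}$ for $a\in A$ and $\iota(b)=b$), which always preserves any connection set and therefore always gives an extra automorphism of the Cayley graph. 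I would neutralise this by attaching to the two cosets $A$ and $Ab$ distinct decorations — for instance, replace $\mathrm{Cay}(G,S)$ by a graph on $|G|$ vertices (or close to it) where the induced subgraph on $A$ and the induced subgraph on $Ab$ are forced to be non-isomorphic, exploiting that $A$ is abelian of exponent $>2$ while $Ab$ is a coset of elements of order $4$. One must then check that after this modification the only graph automorphisms are the left translations by $G$, using that $A$ itself — being abelian of exponent $>2$ — nearly has a GRR and the remaining symmetry of a Cayley graph on $A$ is controlled. The hardest sub-case is likely when $A$ is small or has many automorphisms commuting with inversion, so that the decoration must be chosen carefully; the groups $Q_{2^r}$ and $Q_{2^r}\times C_2$ are precisely where this construction provably cannot be pushed below $|G|+1$ and $n+3$ respectively, which is consistent with their being excluded — so the argument must use in an essential way that $A$ is \emph{not} cyclic-of-$2$-power-order-alone, i.e. that $G$ is not on the excluded list, to guarantee enough room in $A$ to place an asymmetric gadget.

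In summary: the routine half is the GRR reduction plus a bounded table of small-group constructions; the substantive half is a uniform construction for generalised dicyclic groups $\Dic(A,b^2)$ with $A$ not forcing $G\in\{Q_{2^r},Q_{2^r}\times C_2\}$, where the key step — and the main obstacle — is to rigidify a Cayley graph on $G$ by decorating the two cosets of $A$ differently, then prove that this kills exactly the inversion-type automorphism and nothing survives beyond $\mathrm{Aut}(X)\cong G$, all while keeping the vertex count at most $|G|$.
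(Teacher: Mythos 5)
Your reduction via the GRR-Theorem and the plan to dispatch the finitely many Table \ref{table:kysymys} exceptions by explicit constructions is exactly how the paper begins (Proposition \ref{3aa}), and that half is fine, modulo one omission: the generalised dicyclic groups $Q_{2^r}\times C_2\times C_2\times C_2$ also need ad hoc treatment, because the quotient-based argument below breaks for them.

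The genuine gap is in your case (a). You identify the obstruction to a GRR for $G=\Dic(A,b^2)$ as an automorphism relating the two cosets $A$ and $Ab$, and you propose to kill it by making the induced subgraphs on $A$ and on $Ab$ non-isomorphic. But the universal surplus automorphism of \emph{every} Cayley graph of a generalised dicyclic group (Nowitz \cite{nowit}) is the map $\phi$ with $\phi(g)=g$ for $g\in A$ and $\phi(g)=b^2g=g^{-1}$ for $g\in G\setminus A$: since every $s\in S\setminus A$ satisfies $s^{-1}=b^2s$, one checks directly that $\phi$ preserves all edges $\{x,sx\}$. This $\phi$ fixes $A$ \emph{pointwise} and maps $Ab$ to itself, so it is completely unaffected by any decoration that merely distinguishes the two cosets from each other; your rigidification targets an automorphism that is not the one causing the failure, and the construction as described would still have automorphism group strictly containing $G$. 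Breaking $\phi$ requires distinguishing $g$ from $b^2g$ inside the coset $Ab$ in a way compatible with the left $G$-action, and it is not clear this can be done on $|G|$ vertices by gadget-attachment at all. The paper avoids the issue entirely by a different construction: it writes $A=X\oplus\langle y\rangle$ with $b^2\in\langle y\rangle$ (Lemma \ref{30a}), notes that $G/X$ is generalised quaternion (or $C_4$) and $G/\langle y\rangle$ is generalised dihedral with $X\cap\langle y\rangle=1$, builds small graphs realising these two quotients (Babai's construction for the first, a GRR via Lemma \ref{a33} for the second), and joins them by a coset-indexed bipartite edge set so that $\Aut(\G)$ is the subdirect product recovering $G$. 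The vertex count is roughly $2|G|/|X|+|G|/|\langle y\rangle|\le|G|$, well below $|G|$ in general — no rigidified Cayley graph on $G$ itself is ever used. You would need either to supply this quotient idea or a genuinely new way to defeat $\phi$ within a $|G|$-vertex graph.
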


By the GRR-Theorem (Theorem \ref{23b}), in order to prove Theorem \ref{3}, we only need to consider the cases when
$G$ is generalised dicyclic and when $G$ is one of the non-abelian groups that appear in Table \ref{table:kysymys} but not in Table \ref{table:kd}. We will do this in 
Propositions \ref{3aa} and \ref{3a}.
In particular, in Proposition \ref{3aa}, we consider the groups $D_6,\;D_8,\; D_{10}$, and
\begin{equation}\label{eq:longeq}
\begin{split}
G_1=&\;\la a,b,c\mid a^2=b^2=c^2=1,abc=bca=cab \ra,
     \\G_2=&\;\la a,b,c\mid a^3=b^3=c^2=1,ab=ba,(ac)^2=(bc)^2=1 \ra ,
    \\G_3=&\;\la a,b,c\mid a^3=c^3=1,ac=ca,bc=cb,b^{-1}ab=ac\ra,
    \\G_4=&\; Q_8\times C_4 ,
    \\G_{r+2}=&\;Q_{2^{r}}\times C_2\times C_2\times C_2, \qquad r\geq 3;
\end{split}    
\end{equation}
the remaining groups are addressed in Proposition \ref{3a}.

Let us start with two lemmas that will be used in the proof of Proposition \ref{3a}.
\begin{lemma}{\label{a33}}
Let $G=\Dih(X)$ be a generalised dihedral group of order $2k$, where $k\geq 6,\;k\neq 9$, that is not the group $C_2\times C_2 \times C_2\times C_2$. Then there exists a GRR for $G$.
\end{lemma}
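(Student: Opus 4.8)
The plan is to reduce the statement to the classical GRR-Theorem (Theorem \ref{23b}) by arguing that a generalised dihedral group $\Dih(X)$ of the stated kind is never one of the forbidden cases in that theorem. Recall that the forbidden groups are: abelian groups of exponent $>2$, generalised dicyclic groups, and the 13 small exceptions of Table \ref{table:kysymys}. So I would proceed by eliminating each of these possibilities in turn for $G=\Dih(X)$.

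First I would handle the infinite families. A generalised dihedral group $\Dih(X)$ is abelian if and only if $X$ has exponent at most $2$, in which case $\Dih(X)=X\times C_2$ is itself an elementary abelian $2$-group, hence of exponent $2$; so $\Dih(X)$ is never an abelian group of exponent greater than $2$. Next, $\Dih(X)$ is never generalised dicyclic: a generalised dicyclic group $\Dic(A,b^2)$ has a unique involution among the elements outside $A$-type structure—more usefully, in $\Dic(A,b^2)$ every element outside $A$ has order $4$ (since $(ab)^2 = b^2 \neq 1$ for all $a\in A$), so such a group has a cyclic Sylow $2$-structure forcing a single involution when $|G|$ is a $2$-power and, in general, the elements inverting $A$ are never involutions; by contrast $\Dih(X)$ always contains the involution $b$ inverting $X$, together with many involutions of the form $xb$ when $X$ has even order, and in fact $b$ itself is an involution acting by inversion, which cannot happen in $\Dic(A,b^2)$ where the inverting elements square to $b^2\neq 1$. (I would phrase this cleanly: in $\Dih(X)$ the coset $Xb$ consists of elements $xb$ with $(xb)^2 = x x^{-1} = 1$, so \emph{every} element of $Xb$ is an involution, whereas in a generalised dicyclic group the analogous coset consists entirely of elements of order $4$.)

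It then remains to check that $\Dih(X)$ is not one of the 13 exceptional groups of Table \ref{table:kysymys}. Since $|G| = 2k$ with $k\geq 6$ and $k\neq 9$, we have $|G|\geq 12$ and $|G|\neq 18$; this already removes $C_2\times C_2$, $C_2^3$, $D_6$, $D_8$, $D_{10}$, and $G_3$ (order $18$) and the $\la a,b,c\mid a^3=b^3=c^2=1,\dots\ra$ group (order $18$). For the remaining candidates—$A_4$ (order $12$), the three order-$16$ groups including $C_2^4$ and $G'_{16}$, $Q_8\times C_3$ (order $24$), and $Q_8\times C_4$ (order $32$)—I would argue structurally: $A_4$ has no subgroup of index $2$, so it is not generalised dihedral; $C_2^4$ is explicitly excluded in the hypothesis; $G'_{16}$ and the other order-$16$ exception are not generalised dihedral because a generalised dihedral group of order $16$ must be $\Dih(X)$ with $|X|=8$, giving only $D_{16}$, $D_8\times C_2$, or $\Dih(C_4\times C_2)$, none of which coincides with $G'_{16}$ (which has an element of order $8$ not inverted by the order-$2$ generator) or with the $abc=bca=cab$ group; and $Q_8\times C_3$, $Q_8\times C_4$ are not generalised dihedral since $Q_8$ has no index-$2$ subgroup that is abelian and complemented by an inverting involution—more directly, these groups have a unique involution in the $Q_8$ factor pattern and no involution acting by inversion on an index-$2$ abelian subgroup. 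I would organise this final part as a short case check, possibly citing the classification of small groups for the order-$16$ and order-$24,32$ cases.

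The main obstacle I anticipate is \emph{not} any deep mathematics but getting the bookkeeping airtight for the 13 exceptions: I must be careful that "generalised dihedral of order $2k$" with the excluded values $k=6,9$ and the excluded group $C_2^4$ really does avoid every entry of Table \ref{table:kysymys}, since several of those entries (the order-$16$ ones especially) are given by presentations rather than familiar names, and one must verify by a structural invariant—such as "has an abelian subgroup of index $2$ inverted by an involution"—that they are not generalised dihedral. Once that case analysis is complete, the conclusion is immediate: $G$ falls outside all exception classes of Theorem \ref{23b}, hence admits a GRR.
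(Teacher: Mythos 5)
Your overall strategy is the same as the paper's: invoke the GRR-Theorem and rule out, one by one, the exception classes (abelian of exponent $>2$, generalised dicyclic, the 13 small groups). Within that frame you differ in two places, and both of your routes work but need a little tightening. For the dicyclic exclusion, your key observation --- every element of the outer coset $Xb$ of $\Dih(X)$ is an involution, while every element of the outer coset of $\Dic(A,c^2)$ has order $4$ --- is exactly the right ingredient, but as written you speak of ``the analogous coset'' as if the two index-$2$ subgroups coincide; since $A$ need not equal $X$, you should either finish by counting (all order-$4$ elements of $\Dih(X)$ lie in $X$, so there are at most $|G|/2-1$ of them, whereas $\Dic(A,c^2)$ has at least $|G|/2$) or, as the paper does, use these order facts to place $c\in X$ and $b\in A$ and derive $c^2=1$ from the two inversion relations $bcb^{-1}=c^{-1}$ and $cbc^{-1}=b^{-1}=b$. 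For the remaining exceptional groups of orders $16$, $24$ and $32$, you propose an ad hoc identification against the classification of small generalised dihedral groups; the paper instead uses a single structural invariant that you should adopt, namely that the centre of a non-abelian generalised dihedral group has exponent $2$ (a central $g$ lies in $X$, so $bgb=g^{-1}=g$), while each of those four groups has a central element of order $3$ or $4$ --- this disposes of $G'_{16}$, the $abc=bca=cab$ group, $Q_8\times C_3$ and $Q_8\times C_4$ uniformly and saves the bookkeeping you flag as the main obstacle. Two small housekeeping points: the order-$27$ entry of the table is excluded simply because $|G|=2k$ is even (your list conflates it with the order-$18$ group), and note that there are really only two non-abelian generalised dihedral groups of order $16$ ($\Dih(C_4\times C_2)\cong D_8\times C_2$), not three.
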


\begin{proof}
By the GRR-Theorem, it suffices to prove that $G$ is non-generalised dicyclic and not one
of the groups appearing in Table \ref{table:kysymys}.

Let $G=\la X,b\ra$, $ b^2=1$.
Suppose that $G=\Dic(A,c^2),$  for some $A\leqslant G,$ $c\in G$.
Then the order of $c$ is 4 and the order of $b$ is 2, hence $c\in X$, $b\in A$. It follows from the properties of generalised dicyclic and generalised dihedral groups that
$$bcb^{-1}=c^{-1}\qquad \te{and}\qquad cbc^{-1}=b^{-1}.$$
The equalities given above imply that $c^2=1$, which is a contradiction.

The restriction $k\neq 9$, implies that $G$ is not the group of order 18 in Table \ref{table:kysymys}. Moreover, since $|G|=2k\geq 12,$ $G$ is not among the groups $C_2\times C_2$, $C_2\times C_2\times C_2$, $D_6$, $D_8$, $D_{10}$ or the group of order 27 in  Table \ref{table:kysymys}. On the other hand, the group $A_4$ has no abelian subgroup of index 2, hence it is not generalised dihedral. The remaining 4 suitable groups given in Table \ref{table:kysymys} contain a central element of order 3 or 4. However, if $g$ is in the center of $G$ and $G$ is non-abelian then $g\in X$, hence $bgb=g^{-1}$. Furthermore, $bgb=g$, as $g$ is central. Therefore, $g$ has order 2.
\end{proof}
\begin{lemma}\label{30a}
Let $G$ be an abelian $2$-group and let $c\in G$ be an element of order $2$. Then there exists some $y\in G,\;A<G$ such that 
\[ G= \la y \ra\oplus A \;\;\;\te{and}\;\;\;c\in \la y \ra .\]
\end{lemma}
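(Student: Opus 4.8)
The plan is to use the structure theorem for finite abelian groups, being careful to choose the cyclic direct summand so that it absorbs the prescribed element $c$ of order $2$. First I would write $G$ as a direct sum of cyclic $2$-groups, $G=\la x_1\ra\oplus\cdots\oplus\la x_t\ra$ with $|x_i|=2^{e_i}$ and $e_1\geq e_2\geq\cdots\geq e_t\geq 1$. The element $c$ of order $2$ can then be expressed in coordinates as $c=(c_1,\dots,c_t)$ where each $c_i\in\la x_i\ra$ is either trivial or the unique involution $2^{e_i-1}x_i$ of $\la x_i\ra$; let $j$ be the smallest index with $c_j\neq 0$ (such $j$ exists since $c\neq 1$).

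Next I would replace the $j$-th generator $x_j$ by a new generator $y$ whose cyclic span still has order $2^{e_j}$, contains $c$, and together with the remaining $x_i$ ($i\neq j$) still gives a direct sum decomposition of $G$. The natural choice is $y=x_j+\sum_{i>j: c_i\neq 0}2^{e_i-1}x_i$ (adjusting only coordinates in summands of order dividing $2^{e_j}$, which is automatic since $e_i\leq e_j$ for $i>j$ — here I use that the $e_i$ are non-increasing, which is exactly why I picked the \emph{smallest} such $j$). One checks $|y|=2^{e_j}$ because its $x_j$-coordinate has full order $2^{e_j}$, and $2^{e_j-1}y$ equals $c$ because doubling $2^{e_j-1}$ times kills every term except those of order exactly $2^{e_j}$, i.e.\ the $x_j$-term and each added $2^{e_i-1}x_i$ with $e_i=e_j$, which together reproduce the nonzero coordinates of $c$. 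Wait — I should double-check that last point: if $e_i<e_j$ then $2^{e_j-1}\cdot(2^{e_i-1}x_i)=0$, so those added terms get annihilated and do \emph{not} contribute to $2^{e_j-1}y$; hence $c$ and $2^{e_j-1}y$ might differ in the low-order coordinates. The fix is to instead verify directly that $\la y\ra$ contains $c$ by exhibiting the right multiple: since $c_i=2^{e_i-1}x_i$ for the relevant $i$, and $\la y\ra\ni 2^{e_j-e_i}y$ has $i$-th coordinate $2^{e_j-e_i}x_j$-junk plus $2^{e_j-1}x_i=c_i$, a cleaner route is to argue abstractly.

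So the cleaner approach I would actually carry out: proceed by induction on $|G|$. If $G$ is cyclic we are done with $A=0$. Otherwise write $G=\la x\ra\oplus H$ with $|x|$ maximal among orders of elements of $G$ (so $\la x\ra$ is a direct summand of maximal cyclic order, which exists by the structure theorem). Write $c=c'+c''$ with $c'\in\la x\ra$, $c''\in H$. If $c''=0$ take $y=x$, $A=H$. If $c''\neq 0$, then $c''$ has order $2$ in $H$, so by induction $H=\la z\ra\oplus B$ with $c''\in\la z\ra$; since $|x|\geq|z|$ there is a well-defined homomorphism sending $z\mapsto z$, $x\mapsto x$, and the automorphism of $\la x\ra\oplus\la z\ra$ fixing $x$ and sending $z\mapsto z+ (|z|/2)\cdot(|x|\text{-scaled copy})$ — concretely, set $y=x+m$ where $m\in\la x\ra$ is the element of order $2$ if $c'\neq 0$, else... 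Actually the genuinely clean statement: replace $\la x\ra\oplus\la z\ra$ by $\la x+z\ra\oplus\la z\ra$ when $|x|=|z|$, or by $\la x\ra\oplus\la z\ra$ when $|x|>|z|$ but then adjust by noting $c=c'+c''$ and that we can write $G=\la x'\ra\oplus A$ with $x'=x+z$ and verify $c=(|x'|/2)x'$ up to an element we fold into the choice. The main obstacle, and the step I'd spend the most care on, is precisely this bookkeeping: choosing the generator $y$ so that simultaneously $\la y\ra$ is a direct summand of $G$ \emph{and} $c\in\la y\ra$, given that $c$ may have a nonzero component in the maximal-order summand and nonzero components elsewhere. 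Once the right $y$ is pinned down, verifying $\la y\ra\oplus A=G$ reduces to a determinant/change-of-basis check in the relevant two summands, and $c\in\la y\ra$ follows by computing the appropriate power of $y$; both are routine.
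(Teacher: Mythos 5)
You correctly identify the crux of the lemma --- choosing a cyclic direct summand $\la y\ra$ that simultaneously complements the rest of $G$ and contains $c$ --- but you never resolve it, and both concrete constructions you sketch fail. Your first attempt bases $y$ on the summand of \emph{largest} order among those where $c$ has a nonzero coordinate and adds the involutions of the lower summands: as you yourself notice, $2^{e_j-1}y$ then kills every added term, and in fact no multiple of such a $y$ can equal $c$ at all, since matching the involution in the $j$-th coordinate forces the multiplier $k$ to be even (when $e_j\geq 2$) while matching the involution in a lower coordinate forces $k$ to be odd. Your inductive fallback has the same defect: $\la x+z\ra$ contains $c'+c''$ only when $|x|=|z|$ (if $|x|>|z|$ then $2^{n-1}(x+z)=c'$ because $2^{n-1}z=0$), and at exactly the point where $|x|>|z|$ your argument trails off into ``adjust by\dots an element we fold into the choice.'' Ending with ``once the right $y$ is pinned down, the rest is routine'' concedes that the one nontrivial step is missing.

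The fix, which is the paper's entire proof, is to go in the opposite direction: order the summands so that $c=\big(a_1^{2^{r_1-1}},\dots,a_s^{2^{r_s-1}},1,\dots,1\big)$ with $r_1\leq\dots\leq r_s$, i.e.\ base the construction on the summand of \emph{minimal} order among those where $c$ is nonzero, and set $y=\big(a_1,a_2^{2^{r_2-r_1}},\dots,a_s^{2^{r_s-r_1}},1,\dots,1\big)$. The exponents $2^{r_i-r_1}$ are chosen precisely so that $y^{2^{r_1-1}}$ has $i$-th coordinate $a_i^{2^{r_i-1}}$, giving $c\in\la y\ra$ on the nose; and since the first coordinate of $y$ is $a_1$ itself, $y$ has order $2^{r_1}$ and $\la y\ra$ meets $\la a_2\ra\oplus\cdots\oplus\la a_n\ra$ trivially, so $A=\la a_2\ra\oplus\cdots\oplus\la a_n\ra$ works. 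In your inductive framing this amounts to taking $y=z\,x^{2^{n-m}}$ (modifying the \emph{small} generator by a power of the large one), not $x+z$ or $x$ plus an involution.
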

\begin{proof} We may choose $a_1,a_2,...,a_n,$ such that
$G=\la a_1\ra\oplus \la a_2\ra\oplus \cdots \oplus \la a_n\ra$ and
$c=\big(a_1^{2^{r_1-1}},\;a_2^{2^{r_2-1}},...,\;a_s^{2^{r_s-1}},1,1,...,1\big),\;\te{where}\;0<r_1\leq r_2\leq\cdots \leq r_s\;\te{and } a_i\te{ has order }2^{r_i}.$

Let $y=\big(a_1,\;a_2^{2^{r_2-r_1}},...,\;a_s^{2^{r_s-r_1}},1,1,...,1\big). $
Then $c\in \la y\ra$. Furthermore, the order of $y$ is equal to the order of $a_1$, which is $2^{r_1}$.
Hence $\la y\ra\cap\big( \la a_2\ra \oplus \cdots \oplus \la a_n\ra\big)=\{1\}$ and 
 $G=\la y\ra\oplus A,$
where $A=\la a_2\ra \oplus \cdots \oplus \la a_n\ra.$
\end{proof}

Let us now define a collection of graphs, one for each group appearing in (\ref{eq:longeq}).
\begin{construction}\label{4grrrrr}
Let us first define the graph $\G_1$ on 16 vertices and 52 edges. Let
$V(\G_1)=V_1\cup V_2,$ where $V_1=\{1,2,...,8\}$ and $V_2=\{1',2',...,8'\}$. Let $E(\G_1)$ be such that, for $v,w\in V_1$,
\vspace{-2 mm}\begin{align*}
v\sim w &\iff(v,w) \in\bigcup_{i,j\;\in\{0,1\}}\big(\{1+i+4j,3+i+4j\}\times \{5+i-4j,7+i-4j\}\big) ;\\
 v'\sim w'& \iff v,w\in\bigcup_{i\in\{0,1\}}\big\{1+4i,2+4i,3+4i,4+4i\big\},\qquad v\neq w; \\
v\sim w'& \iff \mathsmaller{\left\{\begin{array}{ll}
w-v=0,4, \qquad\te{or}\\
v-w\equiv 2\;(\te{mod  }{4}),v>4,w\leq4, \qquad\te{or}\\
v-w\equiv \pm 1\;(\te{mod }{4})\te{ and }(v>4\iff w>4).
\end{array}\right.}
\end{align*}

\end{construction}

\begin{construction}{\label{con3}}\label{4grrrr}
Let us now define the graph $\G_2$, which has 18 vertices and 99 edges. Let $V(\G_2)=W_1\cup W_2,$ where $W_1=\{1,2,...,9\},$ $W_2=\{1',2',...,9'\}$, and
$E(\G_2)$ is such that,
for $v,w$ $\in W_1$,
\vspace{-2 mm}\begin{align*}v'\sim w'&\iff \mathsmaller{\left\{\begin{array}{ll}
\forall k\in \{0,1,2\},v>3k\iff w>3k, \qquad v\neq w,\qquad\te{or}\\
    w-v\equiv \pm\; 3\; \te{(mod }9), \qquad \te{or}\\
    {[v,w]}\in\big\{[1, 6],[2, 4],[3, 5]\big\};
\end{array}\right.}  \\
v\sim w&\iff v'\nsim w',\qquad v\neq w;\\
v\nsim w'& \iff v,w\in \{1,2,3\},\qquad\te{or }v,w\in \{4,5,6\}.\end{align*}
\end{construction}

\begin{construction}\label{4grrr}
Let us also construct the graph $\G_3$ on 27 vertices and 171 edges. Let $V(\G_3)=\ma 1\cup \ma {1'}\cup \ma {1''},$ where $\ma 1=\{1,2,...,9\},$ $\ma {1'}=\{1',2',...,9'\}$ and $\ma {1''}=\{1'',2'',...,9''\}$. The edge set of $\G_3$ is such that, for every $ v,w\in \ma 1$, we have 
\vspace{-2 mm}\begin{align*}
v'\nsim w';& \;v''\nsim w'';\\
v\sim w &\iff v\neq w;\\
v\sim w''&\iff \mathsmaller{\left\{\begin{array}{ll}
w-v\equiv 1\;(\te{mod 3}),\qquad\te{or}\\
w-v\equiv 0,3\;(\te{mod 9});
\end{array}\right.}\\
v\sim w'&\iff  \mathsmaller{\left\{\begin{array}{ll}
w-v\equiv 0,2k,4k\;(\te{mod 9}),v\equiv k\;(\te{mod 3})\te{ for some }k\in\{1,2\}, \qquad\te{ or}\\
w-v\equiv 0,\pm 1\;(\te{mod 9}),
v\equiv 0\;(\te{mod 3});
\end{array}\right.}  
\\v'\sim w''&\iff w\sim v'.\end{align*}
\end{construction}
\begin{construction}\label{4grr}
Let $\G$ be the graph on 10 vertices with automorphism group Aut$(\G)\cong C_4$ given in Example \ref{opq}.
Let $\G'$ be a graph on 16 vertices constructed according to Babai's Construction \ref{con0} for the group $ Q_8$. 
We define $\G_4$ to be the graph $\G_4=\G\cup \G'$. 
\end{construction}
\begin{construction}{\label{4gr}}
Let $r\geq 3$. We let $\G$ be a graph on $2^{r+1}$ vertices constructed according to Babai's Construction \ref{con0} for the generalised quaternion group $ Q_{2^{r}}$ and $\G'$ to be a graph on 6 vertices such that $\Aut(\G')\cong C_2\times C_2\times C_2$, which exists since $\alpha(C_2\times C_2\times C_2)=6$ (see 
Table \ref{iii} and \cite[Theorem 8.1]{arli} for a proof).
Then, the graph $\G_{r+2}=\G\cup \G'$ on $2^{r+1}+6$ vertices has automorphism group $\Aut(\G_{r+2})\cong G_{r+2}$, since $\G$
 is a connected component of $\G_{r+2}$ of size $2^{r+1}> 6$ and
$\Aut(\G)\cong  Q_{2^{r}},\;\Aut(\G')\cong C_2\times C_2\times C_2$.
\end{construction}

\begin{proposition}{\label{3aa}}
If $G$ is the dihedral group $D_{2n}$, where $3\leq n\leq 5,$ or one of the groups $G_1, G_2, G_3, G_4, G_{r+2}$ $(r\geq 3)$ in (\ref{eq:longeq}) then $\alpha(G)\leq |G|$.
\end{proposition}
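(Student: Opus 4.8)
The plan is to handle each group in Proposition~\ref{3aa} separately, since the statement is really a finite list of cases, four of which are small sporadic groups and one of which is an infinite family that has already been dealt with in Construction~\ref{4gr}. For the dihedral groups $D_{2n}$ with $3\le n\le 5$ (these are $D_6,D_8,D_{10}$, precisely the small dihedral exceptions of the GRR-Theorem that do \emph{not} appear in Table~\ref{table:kd}), the claim $\alpha(G)\le |G|$ is known classically: one cites the references \cite{gl1,gl,hag,djmc} where $\alpha$ was computed for all dihedral groups, or exhibits the small explicit graphs directly. Indeed $\alpha(D_6)=5<6$, $\alpha(D_8)=6<8$, $\alpha(D_{10})=6<10$ are small enough to display a witnessing graph by hand, so this case costs nothing beyond bookkeeping.

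For the four sporadic groups $G_1,G_2,G_3,G_4$, the idea is simply to verify that the explicitly constructed graphs $\G_1,\G_2,\G_3,\G_4$ of Constructions~\ref{4grrrrr}--\ref{4grr} have the right automorphism group. Each of $\G_1,\G_2,\G_3$ has $|G_i|$ vertices ($16,18,27$ respectively), so once one checks $\Aut(\G_i)\cong G_i$ the bound $\alpha(G_i)\le |G_i|$ follows immediately. For $\G_4=\G\cup\G'$ one uses that $\G$ is a graph on $10$ vertices with $\Aut(\G)\cong C_4$ (Example~\ref{opq}) and $\G'$ is Babai's graph on $16$ vertices with $\Aut(\G')\cong Q_8$ (Construction~\ref{con0} applies since $Q_8$ is non-cyclic of order $\ge 6$); since the two components have coprime-to-automorphism... more precisely since $|V(\G)|=10\ne 16=|V(\G')|$, no automorphism of $\G_4$ mixes the components, so $\Aut(\G_4)\cong \Aut(\G)\times\Aut(\G')\cong C_4\times Q_8\cong Q_8\times C_4 = G_4$, on $26<32$ vertices. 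The family $G_{r+2}=Q_{2^r}\times C_2\times C_2\times C_2$ is already fully treated inside Construction~\ref{4gr}: $\G_{r+2}=\G\cup\G'$ on $2^{r+1}+6$ vertices has a connected component $\G$ of size $2^{r+1}>6$ with $\Aut(\G)\cong Q_{2^r}$ and the other part has $\Aut\cong C_2^3$, so again the component sizes differ, $\Aut(\G_{r+2})\cong Q_{2^r}\times C_2^3$, and $2^{r+1}+6 < 2^{r+3}=|G_{r+2}|$. So I would simply invoke that construction.

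The real work, and the main obstacle, is the automorphism-group computation for the three sporadic Cayley-type graphs $\G_1,\G_2,\G_3$. For each one I would (i) identify an explicit subgroup of $\Aut(\G_i)$ isomorphic to $G_i$ by checking that the relevant permutations of the vertex set preserve the edge relation --- here the bipartition-like structure $V_1\cup V_2$ (resp.\ $W_1\cup W_2$, resp.\ $\ma1\cup\ma{1'}\cup\ma{1''}$) is designed so that a copy of the group acts with the orbits as indicated, and one verifies the generators from the presentations in (\ref{eq:longeq}) act as graph automorphisms; and (ii) prove there are no extra automorphisms. Step (ii) is the delicate part. The standard route is degree/structure arguments: show the valency sequence or the induced subgraphs on the parts force any automorphism to preserve the parts $V_1,V_2$ (for $\G_1$ one compares $\rho_{\G_1}$ restricted to $V_1$ versus $V_2$, exploiting that $V_2$ induces a disjoint union of two $K_4$'s while $V_1$ does not), then show the action on each part is rigid enough --- e.g.\ $\G_1[V_2]=K_4\sqcup K_4$ so automorphisms permute these two $K_4$'s and within them; the cross-edges then pin down the permutation up to the group $G_1$. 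Similar case analyses, using the clique structure on $W_2$ and on $\ma1$, handle $\G_2$ and $\G_3$. This is essentially a (careful but routine) finite verification; I would organize it by first establishing invariance of the orbit decomposition, then bounding $|\Aut(\G_i)|$ by $|G_i|$ via the rigidity of the cross-edge pattern, concluding $\Aut(\G_i)\cong G_i$ and hence $\alpha(G_i)\le|G_i|$.
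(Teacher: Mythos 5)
Your proposal is correct and follows essentially the same route as the paper: the dihedral cases are disposed of by a small explicit graph (the paper uses the $n$-cycle, which gives $\alpha(D_{2n})\le n$ for $3\le n\le 5$; your quoted exact values $5,6,6$ are incorrect --- the correct values are $3,4,5$ --- but only the inequality matters here), the groups $G_1,\dots,G_4$ are handled via Constructions \ref{4grrrrr}--\ref{4grr}, and $G_{r+2}$ via the disjoint-union argument of Construction \ref{4gr}. The one substantive difference is in how $\Aut(\G_i)\cong G_i$ is certified for $i=1,2,3$: you sketch a by-hand rigidity argument (valency and clique-structure invariants forcing the orbit decomposition, then cross-edges pinning down the action), whereas the paper simply verifies these finite cases (together with $\G_4$) using GAP; your outline is plausible but not executed, so both texts leave the same computational core without a written proof. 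Your explicit argument for $\G_4$ --- two connected components of distinct sizes $10$ and $16$, hence $\Aut(\G_4)\cong C_4\times Q_8\cong G_4$ on $26<32$ vertices --- is a clean self-contained replacement for the GAP check in that one case, mirroring exactly the argument the paper itself gives for $\G_{r+2}$.
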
 
\begin{proof}
The $n$-cycle has full automorphism group $D_{2n}$ so $\alpha(D_{2n})< |D_{2n}|$ for $3\leq n\leq 5$.

The graphs $\G_i$ in Constructions \ref{4grrrrr}$-$\ref{4gr} 
are designed to have at most $|G_i|$ vertices and automorphism groups $\Aut(\G_i)\cong G_i$,
for each $i\geq 1$. We omit the proof that $\Aut(\G_i)\cong G_i$ for $1\leq i\leq 4$, which we verified using the mathematical software GAP \cite{gap}.
\end{proof}

We will now show that Theorem \ref{3} also holds for the generalised dicyclic groups $G$ that are different from $Q_{2^r}\times C_2\times C_2\times C_2, r\geq 3$, completing the proof of Theorem \ref{3}.
\begin{proposition}{\label{3a}}
Let $G$ be a generalised dicyclic group such that
\begin{enumerate}[$i)$]
     \item$G$ is not a generalised quaternion group,
     \item $G$ is not a generalised dicyclic group of the form $Q_{2^r}\times C_2$ or $Q_{2^r}\times C_2\times C_2\times C_2$,
    \item$G$ is not one of the groups $\Dic_3,\; \Dic_5,\;\Dic_6,\;{G}_{16}$ that appear in Table \ref{table:kd}.

\end{enumerate}
Then $\alpha(G)\leq |G|$.

\end{proposition}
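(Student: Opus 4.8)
The plan is to reduce to a setting where Lemma \ref{a33} applies. Let $G=\Dic(A,b^2)$ be a generalised dicyclic group satisfying hypotheses $i)$–$iii)$; write $G=\langle A,b\rangle$ with $b^4=1$, $b^2\in A\setminus\{1\}$, and $bab^{-1}=a^{-1}$ for all $a\in A$. Since $b^2$ is an element of order $2$ in the abelian group $A$, and since $A$ need not be a $2$-group in general, I would first split $A$ as $A=A_2\oplus A_{\mathrm{odd}}$ where $A_2$ is the $2$-part (containing $b^2$) and $A_{\mathrm{odd}}$ has odd order. Applying Lemma \ref{30a} to the abelian $2$-group $A_2$ with the order-$2$ element $c=b^2$, I get $A_2=\langle y\rangle\oplus A'$ with $b^2\in\langle y\rangle$. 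So $A=\langle y\rangle\oplus (A'\oplus A_{\mathrm{odd}})$, and the subgroup $\langle y,b\rangle$ is itself dicyclic (in fact generalised quaternion, since $\langle y\rangle$ is cyclic of $2$-power order $\geq 4$ as it contains $b^2$), while $B:=A'\oplus A_{\mathrm{odd}}$ is a complement: $G=\langle y,b\rangle\cdot B$ with $B$ normal, $B\cap\langle y,b\rangle=\{1\}$, and $b$ inverting $B$. The key structural observation is that $G\cong Q_{2^r}\times B$ precisely when $b$ inverts every element of $A$ and $y$ is central-ish; more carefully, $G$ is the semidirect-or-direct product of $Q_{2^r}$ and the abelian group $B$ with $b$ acting by inversion on $B$ — i.e. $G\cong \Dih(B)\ast$-like glued to $Q_{2^r}$ along $b$. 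I would make this precise: $G=Q_{2^r}\times_{\langle b\rangle}\Dih(B)$, the "central-ish" product identifying the involution $b$ in both factors.

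The main case division is then on the structure of $\Dih(B)$ (equivalently on $|B|$). If $|B|=1$ then $G=Q_{2^r}$, excluded by $i)$. If $B\cong C_2$ then $G=Q_{2^r}\times C_2$, excluded by $ii)$. If $B\cong C_2\times C_2\times C_2$ then $G=Q_{2^r}\times C_2\times C_2\times C_2$, also excluded by $ii)$. This leaves the generic case. I would handle it via a disjoint-union construction analogous to Constructions \ref{4grr} and \ref{4gr}: take a GRR (or near-optimal graph) $\Gamma_1$ for the dicyclic part $\langle y,b\rangle$ — wait, $Q_{2^r}$ has no GRR, so instead I would use Babai's Construction \ref{con0} to get a graph $\Gamma_1$ on $2\cdot 2^r$ vertices with automorphism group $Q_{2^r}$ — together with a graph $\Gamma_2$ realising $\Dih(B)$ (which has a GRR by Lemma \ref{a33} once $|B|\geq 6$ and $|B|\neq 9$ and $B\ncong C_2^4$) or, more efficiently, realising $B$ itself and then... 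Actually the cleanest route: realise $G$ directly. Since $G=Q_{2^r}\times_{\langle b\rangle}\Dih(B)$ does not obviously split as a direct product, the disjoint-union trick does not immediately apply. So instead I would argue: either $G$ itself is non-dicyclic and not in Table \ref{table:kysymys}, in which case the GRR-Theorem gives $\alpha(G)\leq|G|$ directly — but $G$ IS dicyclic by hypothesis, so this fails. Hence I must genuinely build a graph. The right move is: show $\alpha(\Dih(B))\leq 2|B|$ cheaply (it has a GRR for $|B|$ large, giving $\leq|B|$; small cases handled separately), build a graph $\Gamma$ for $Q_{2^r}$ on $2^{r+1}$ vertices by Construction \ref{con0}, and then take a "product-like" or disjoint-union graph whose automorphism group is forced to be $G$ because the two pieces share only the common involution — this requires a more delicate gadget than a plain disjoint union.

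I expect the main obstacle to be exactly this last point: realising the amalgamated (non-direct) product $Q_{2^r}\times_{\langle b\rangle}\Dih(B)$ on at most $|G|=2^{r-1}\cdot|B|$ vertices. The disjoint-union constructions of \ref{4grr}–\ref{4gr} only work because there the group genuinely IS a direct product $Q_{2^r}\times C_2$ or $Q_{2^r}\times C_2^3$. In the amalgamated case I would instead exploit that $\Dih(B)$ has a GRR (for $|B|\geq 6$, $|B|\neq 9$, $B\ncong C_2^4$, by Lemma \ref{a33}) — so on $|B|$ vertices — and attach to it, at the vertex corresponding to $1$ or along the involution's action, a small rigid gadget carrying the $Q_{2^r}$ structure; the vertex count $|B|+2^{\text{something}}$ must come out $\leq 2^{r-1}|B|$, which is comfortable once $r$ or $|B|$ is not minimal, leaving a finite list of small $G$ to check by hand (or cite GAP, as in Proposition \ref{3aa}). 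The truly tricky subcases are the borderline ones where $|B|\in\{3,4,5,9\}$ or $B\cong C_2^4$, i.e. where Lemma \ref{a33} does not apply to $\Dih(B)$; these yield finitely many groups $G$ of bounded order, and I would dispatch them individually, checking in each that the relevant construction produces a graph on $\leq|G|$ vertices and confirming $\Aut\cong G$ by computer, consistent with the paper's stated methodology.
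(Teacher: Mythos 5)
Your reduction is exactly the paper's: split $A$ into its $2$-part and odd part, apply Lemma \ref{30a} to extract $\langle y\rangle$ containing $b^2$, and write $A=X\oplus\langle y\rangle$ with $X=B_2\oplus A_{2'}$. You also correctly diagnose the central difficulty — $G$ is not a direct product, so the disjoint-union trick of Constructions \ref{4grr}--\ref{4gr} fails — and correctly see that Lemma \ref{a33} will supply a GRR for the generalised dihedral piece outside finitely many small cases. But the proof stops exactly where the real work begins: ``attach a small rigid gadget carrying the $Q_{2^r}$ structure'' is not a construction, and the specific variant you float (attaching the gadget at the vertex corresponding to $1$, or along the involution's action) would not work — a GRR is vertex-transitive, and grafting anything onto a single distinguished vertex destroys that transitivity, so the resulting automorphism group collapses toward a point stabilizer rather than staying isomorphic to $G$. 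Your structural description of $G$ as a central product amalgamated over the ``involution'' $b$ is also off ($b$ has order $4$), which matters because it points you at the subgroup $\langle y,b\rangle$ rather than at the objects the proof actually needs.

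The missing idea is to work with the two \emph{quotients} rather than the subgroup and its complement: since $X\cap\langle y\rangle=\{1\}$, the group $G$ embeds as a subdirect product in $(G/X)\times(G/\langle y\rangle)$, where $G/X\cong Q_{2^{r+1}}$ (or $C_4$ when $r=1$) and $G/\langle y\rangle\cong\Dih(X)$. The paper (Construction \ref{ip1}) builds a graph $\G_1$ realising $G/X$ (Babai's Construction \ref{con0}, or the $10$-vertex $C_4$ graph) and a graph $\G_2$ realising $G/\langle y\rangle$ (a GRR via Lemma \ref{a33} for $|X|\geq 6$, $|X|\neq 9$; Babai's construction for $3\leq|X|\leq 5$; an ad hoc graph for $|X|=9$), partitions each vertex set into the two pieces $T_1,T_2$ and $S_1,S_2$ corresponding to the cosets of $A$ in $G$, and joins $T_i$ completely to $S_i$. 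A valency count forces every automorphism to preserve $V(\G_1)$ and $V(\G_2)$, and the join then forces the two restrictions to be compatible on the common quotient $G/A\cong C_2$, pinning the automorphism group down to the diagonal copy of $G$; the vertex count $\max\{2^{r+2},10\}+(1+\lfloor 5/|X|\rfloor)\cdot 2|X|\leq|G|=2^{r+1}|X|$ uses hypotheses $ii)$ and $iii)$ to exclude the cases where it would fail. Without this quotient-join mechanism your outline cannot be completed as written.
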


The rest of this section concerns the proof of Proposition \ref{3a}.

 Let $G=\Dic(A,b^2)$ be a generalised dicyclic group as in Proposition \ref{3a} and let \[A=A_2\oplus A_{2'},\] where $A_2$ is the Sylow 2-subgroup of $A$
 and $A_{2'}$ is the Hall $2'$-subgroup of $A$.
Then, by Lemma \ref{30a}, there exist $y\in A_2$ and $B_2<A_2$ such that \vspace{-1 mm} \[A_2= \la y \ra \oplus B_2 \;\;\;\;\te{and}\;\;\; b^2\in \la y \ra.\] 
Setting $X=B_2\oplus A_{2'}$, we get $A=X\oplus \la y \ra.$

\vspace {1.5 mm} Let $r,k$ be such that $\la y \ra\cong C_{2^r}, |X|=k$. We note that the quotient group $G/X$ is isomorphic to the generalised quaternion group $Q_{2^{r+1}}$, for $r>1$, and to the cyclic group $ C_4$, for $r=1$. 
 Moreover, the quotient group $G/{\la y\ra}$ is isomorphic to the generalised dihedral group $\Dih(X)$.

\begin{construction}{\label{ip1}}
 We will construct a graph $\G$ such that $\Aut(\G)\cong G.$ We start by defining two graphs, $\G_1$, $\G_2$, with the property that $\Aut(\G_1)\cong G/X$, $\Aut(\G_2)\cong G/\la y\ra$.

For $r\geq 2$,
we let $\G_1$ be the graph with vertex set $V(\G_1)=G/X\cup (G/X)'$ that arises from Babai's Construction \ref{con0} for the generalised quaternion group $G/X$ with respect to the minimal generating set $H=\{yX,bX \}$.
Furthermore, we partition the set of vertices $G/X$ of $\G_1$ into the sets $T_1,\;T_2$, where $ T_i=\{y^nb^{(i-1)}X\mid n\in\mathbb{N}\}, i=1,2$. 
For $r=1$, we define $\G_1$ to be the graph with automorphism group isomorphic to the cyclic group $C_4$ that was presented in Example \ref{opq}.
Likewise, we partition its vertex set into the sets $T_i$, where $T_i=\{2k+i\mid 0\leq k\leq 4\}$,
$i=1,2.$

 Let us describe the graph $\G_2$ for all values of $k$. The conditions $i),ii)$ in Proposition \ref{3a} ensure that $k\geq 3$.
 For $3\leq k\leq 5$, we construct $\G_2$ with vertex set $V(\G_2)=G/\la y\ra\cup (G/\la y\ra)'$ according to Babai's Construction \ref{con0} for the generalised dihedral group $G/\la y\ra$, with respect to some minimal generating set $K=\{k_1,k_2,...,k_d \}$ of $G/\la y\ra$ such that $k_1=b\la y\ra$.
For $k\geq 6,\;k\neq 9$,  we choose a GRR for $G/\la y\ra$, which exists by assumption $ii)$ in Proposition \ref{3a} and Lemma \ref{a33}, and define the graph $\G_2$ to be either this GRR or its complement, with the additional property that the number
$k+\rho_{\G_2}(v)$ is even, for $v\in V(\G_2)$.
Furthermore, for $k\geq3,\;k\neq 9$,
we partition the set of vertices $ G/\la y\ra$ of $\G_2$ into $S_i=\{xb^{(i-1)}\la y\ra\mid x\in X\}$, $i=1,2$.
Finally, for $k=9$, we let $\G_2$ be the graph on 18 vertices presented in Proposition \ref{3aa}, and $S_i=W_i$, where $W_i$ is as in Construction \ref{con3}, for $i=1,2$.

Let us now define the graph $\Gamma$ such that \begin{align*}
  &  V(\Gamma)=V(\Gamma_1)\cup V(\Gamma_2); \\& E(\Gamma)=E(\Gamma_1)\cup E(\Gamma_2)\cup E,\qquad\text{where} \\& E=\Big\{\big[t_i,s_i\big] \mid t_i\in T_i,\;s_i\in S_i,\;i=1,2\Big\}.
\end{align*}

\end{construction}

\begin{lemma}
Let
the sets of vertices $V(\G_1),\;V(\G_2)$ of $\G$ be fixed by $\phi$, for every $\phi\in \Aut(\G)$. Then $\Aut(\G)\cong G.$
\end{lemma}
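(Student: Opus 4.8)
The plan is to analyze an arbitrary $\phi\in\Aut(\G)$ under the hypothesis that $\phi$ fixes each of $V(\G_1)$ and $V(\G_2)$ setwise, and to show that the restriction maps give a group homomorphism $\Aut(\G)\to\Aut(\G_1)\times\Aut(\G_2)$ whose image is exactly the ``diagonal'' copy of $G$. First I would observe that, since $\phi$ fixes $V(\G_1)$ setwise and $\G_1$ is an induced subgraph of $\G$ (the $\G_1$--$\G_2$ edges are all contained in the extra set $E$), the restriction $\phi|_{V(\G_1)}$ lies in $\Aut(\G_1)\cong G/X$, and similarly $\phi|_{V(\G_2)}\in\Aut(\G_2)\cong G/\la y\ra$. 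Composing with these isomorphisms, $\phi$ determines a pair $(\bar\phi_1,\bar\phi_2)\in (G/X)\times(G/\la y\ra)$; the map $\phi\mapsto(\bar\phi_1,\bar\phi_2)$ is clearly an injective homomorphism (injective because $V(\G_1)\cup V(\G_2)=V(\G)$), so it remains to identify its image.

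The key point is that the bipartite ``gluing'' edges $E$ force $\bar\phi_1$ and $\bar\phi_2$ to be compatible. Concretely, I would argue in two directions. (Containment of $G$.) Each $g\in G$ acts on $G/X$ by left translation (a graph automorphism of $\G_1$, since $\G_1$ came from Babai's Construction~\ref{con0}, whose automorphisms include all left translations) and on $G/\la y\ra$ by left translation (an automorphism of $\G_2$ for the same reason, or because $\G_2$ is a GRR on which $G/\la y\ra\cong\Dih(X)$ acts regularly); left multiplication by $g$ sends the partition class $T_i$ of $G/X$ into the class $T_{i'}$ and $S_i$ into $S_{i'}$ for the \emph{same} relabeling $i\mapsto i'$ (because passing to $b$--cosets commutes with the two quotient maps $G\to G/X$ and $G\to G/\la y\ra$), hence preserves $E$. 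So this diagonal action of $G$ sits inside $\Aut(\G)$ and hence inside the image. (The image is no larger.) Conversely, given $\phi\in\Aut(\G)$ with the stated property, $\bar\phi_1$ permutes $\{T_1,T_2\}$ and $\bar\phi_2$ permutes $\{S_1,S_2\}$, and because $\phi$ must send the edge set $E$ to itself, the induced permutation of $\{1,2\}$ must agree on the $T$-side and the $S$-side. A counting/connectedness argument on the bipartite graph $(T_i\cup S_i,\,E)$ — each $t_i\in T_i$ has all its $E$-neighbours in $S_i$ and vice versa, and $|T_i|=2^r=|\la y\ra|$ while $|S_i|=k=|X|$, with $T_i$ and $S_i$ of different sizes so the two sides are distinguishable — pins down which $S$-class each $T$-class is matched to, forcing $\bar\phi_1$ and $\bar\phi_2$ to lie over a common element of the two-element quotient $G/A\cong C_2$. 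Identifying $\{(\bar g_1,\bar g_2)\in (G/X)\times(G/\la y\ra):\bar g_1,\bar g_2$ have the same image in $G/A\}$ with $G$ (via the natural map $G\hookrightarrow G/X\times G/\la y\ra$, which is injective since $X\cap\la y\ra=\{1\}$, and whose image is exactly this fibre product — here one uses $|G|=|X|\cdot|\la y\ra|\cdot 2 = k\cdot 2^r\cdot 2$ and $A=X\oplus\la y\ra$), we conclude $\Aut(\G)\cong G$.

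The main obstacle I expect is making the ``the two relabelings agree'' step fully rigorous, i.e. genuinely ruling out an automorphism $\phi$ whose $\G_1$-part fixes $T_1,T_2$ but whose $\G_2$-part swaps $S_1,S_2$ (or vice versa). This requires checking that the bipartite matching between the $T_i$'s and $S_i$'s is rigid: one has to verify, from the actual structure of $E$ together with the internal edge structure of $\G_1$ and $\G_2$, that a vertex of $T_i$ cannot be carried to a vertex whose $E$-neighbourhood lies in $S_{3-i}$. Here the size discrepancy $|T_i|=2^r\neq k=|S_i|$ does the heavy lifting — $\phi$ preserves valencies, so it cannot map a $T$-vertex to an $S$-vertex or exchange the roles of the two sides of $E$ — but one should also confirm that the two classes $T_1,T_2$ are themselves distinguishable inside $\G_1$ in a way consistent with $E$ (for the generalised quaternion and $C_4$ cases this is where the specific Construction~\ref{con0}/Example~\ref{opq} structure enters), and likewise for $S_1,S_2$ in $\G_2$ (using that $S_1$ is the image of $X$ and carries the GRR/Cayley structure while the gluing treats $S_1,S_2$ asymmetrically via $T_1,T_2$). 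Everything else — the homomorphism property, injectivity, and the fibre-product identification with $G$ — is routine bookkeeping.
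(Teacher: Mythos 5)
Your proposal is correct and is essentially the paper's argument in different packaging: the paper also embeds $G$ diagonally in $\Aut(\G)$ via the actions of $G/X$ on $V(\G_1)$ and $G/\langle y\rangle$ on $V(\G_2)$, and its key step --- that after correcting by $g_1$ the automorphism fixes $T_1$ pointwise, hence preserves the neighbour set $S_1$, which forces the second correcting element $g_2\in\langle X,b\rangle$ to lie in $X$ --- is precisely your observation that the gluing edges $E=\bigcup_i(T_i\times S_i)$ force the two restrictions to induce the same element of $G/A$. Your identification of the image with the fibre product $(G/X)\times_{G/A}(G/\langle y\rangle)\cong G$ (using $X\cap\langle y\rangle=\{1\}$ and an order count) simply replaces the paper's explicit factorisation $\phi=g_1g_2$; the content is the same.
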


\begin{proof}
By construction, the groups $G/X$ and $G/\la y\ra$ act on $V(\G_1)$ and $V(\G_2)$, respectively. Using these actions, we associate a map $g:V(\G)\rightarrow V(\G)$ to every $g\in G$ by setting

\[g(v)=\begin{cases}
gX(v),& \text{if $v\in V(\G_1)$}\\
g\la y \ra(v),& \text{if $v\in V(\G_2)$}.
\end{cases}\]
In other words, the map $g$ is defined to satisfy $g\restriction_{V(\Gamma_1)}=gX$ and $g\restriction_{V(\Gamma_2)}=g\langle y \rangle$.

Let $g\in G$.  We will show that the map $g$ is an automorphism of $\G$.
If $v,w\in V(\G_i)$ for some $i\in \{1,2\}$ then $g$ is an automorphism of $\G$, since 
\[v\sim w \;\te{ in } \G\iff v\sim w\; \te{ in } \G_i\iff g\restriction_{V(\G_i)}v\sim g\restriction_{V(\G_i)}w \;\te{ in } \G_i\iff gv\sim gw\;\te{ in } \G. \]
Let $v\in V(\G_1), w\in V(\G_2)$. 
By construction of $\G$, $g(T_i\times S_i)=T_j\times S_j,$ where i=j if and only if $g\in \la X,y\ra,$ $i,j\in \{1,2\}$. Thus,
\[ [v, w]\in E(\G) \iff (v,w)\in \bigcup_{i=1}^{2}(T_i\times S_i) \iff (gv,gw)\in \bigcup_{i=1}^{2}(T_i\times S_i). \]
In other words, $v\sim w \iff gv\sim gw$.
Since $\la y\ra\cap X=\{1\}$, we have $G\leq \Aut(\G)$. 

For the opposite inclusion,
let $\phi \in \Aut(\G)$. Since, by assumption, $\phi$ fixes $V(\G_1)$, the restriction $\phi\restriction_{V(\G_1)}$ of $\phi$ is an automorphism of $\G_1$. Since $\Aut(\G_1)\cong G/X$, we have that $\phi\restriction_{V(\G_1)}= g_1 X$, for some $g_1\in  G$. Then the automorphism
 $g_1^{-1}\phi$ acts trivially on $V(\G_1)$.
As the set of vertices $T_1$ is fixed by $g_1^{-1}\phi$, the set consisting of all neighbours of $T_1$ is also fixed by the same automorphism.
 However, $g_1^{-1}\phi$ fixes all neighbours of $T_1$, except, possibly, from elements of the set $S_1$. Therefore, $S_1$ is fixed by $g_1^{-1}\phi$.

Likewise, we consider the restriction of the automorphism $g_1^{-1}\phi$ on $\G_2$ to conclude that there exists some $g_2\in G $ such that the automorphism $g_2^{-1}g_1^{-1}\phi$ acts trivially on $V(\G_2)$; without loss of generality, we let $g_2\in \la X,b\ra$.
Since the graph automorphisms $g_1^{-1}\phi$ and $g_2^{-1}g_1^{-1}\phi$ fix the set $S_1$, we conclude that $g_2\in X$.
However, $X$ acts trivially on $V(\G_1)$. Therefore, the graph automorphism $g_2^{-1}g_1^{-1}\phi$ is the identity automorphism of $\G$. Thus, $$\phi=g_1 g_2 \in G $$ and hence $\Aut(\G)\leq G.$
\end{proof}

\begin{lemma}
Let $\phi\in \Aut(\G)$. The sets of vertices $V(\G_1),\;V(\G_2)$ are fixed by $\phi$.
\end{lemma}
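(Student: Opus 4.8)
The plan is to distinguish $V(\G_1)$ from $V(\G_2)$ by a graph-theoretic invariant that $\phi$ must preserve, namely local structure around vertices, and then argue that no vertex of $V(\G_1)$ can be mapped to a vertex of $V(\G_2)$ or vice versa. First I would recall the gross structure of $\G$: it is built from $\G_1$ (a Babai-type graph or the $C_4$-graph of Example~\ref{opq}, on $2|G/X|$ or $10$ vertices) and $\G_2$ (a Babai-type graph, a GRR, or the $18$-vertex graph $\G_2$ of Construction~\ref{con3}, on $2|G/\langle y\rangle|$ or $18$ vertices), joined only by the edge set $E=\{[t_i,s_i]\mid t_i\in T_i,\ s_i\in S_i\}$, a disjoint union of two complete bipartite graphs between $T_i\subseteq V(\G_1)$ and $S_i\subseteq V(\G_2)$. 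The key point is that within $V(\G_1)$ the graph $\G_1$ already has the Babai construction's ``two copies'' structure (one copy $G/X$ of low valency, a second copy $(G/X)'$), and similarly for $V(\G_2)$, and the cross-edges $E$ only touch the copies $G/X$ and $G/\langle y\rangle$, not the primed copies.

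The main steps would be: (1) identify the primed vertices $(G/X)'$ and $(G/\langle y\rangle)'$ as the vertices not incident to any edge of $E$, together with whatever degree/adjacency property distinguishes them inside $\G_1$ and $\G_2$ respectively; since the cross-edges raise the valency of exactly the vertices in $G/X\cup G/\langle y\rangle$, one computes valencies to see that the two ``primed blocks'' have different valencies from each other and from everything else, forcing $\phi$ to permute each block to itself or swap the primed blocks only if the sizes match — and here a valency or component-size count rules out any swap. (2) Having pinned down $(G/X)'\cup G/X = V(\G_1)$ versus $(G/\langle y\rangle)'\cup G/\langle y\rangle = V(\G_2)$ as the two halves, argue that $\phi$ sends $V(\G_1)$ to $V(\G_1)$: the cleanest way is that each $t\in G/X$ has its $E$-neighbourhood equal to all of $S_1$ or all of $S_2$ (a clique or independent set of a specific size in $\G_2$), which is a configuration not realised by cross-edges from the $\G_2$ side unless the relevant cardinalities $|T_i|=2^r\cdot(\text{something})$ and $|S_i|=k$ collide; a direct comparison of these numbers, using $|G/X|\in\{4\}\cup\{2^{r+1}\}$ and $|G/\langle y\rangle|=2k$, shows they are incompatible except in cases already excluded. (3) Conclude that $\phi(V(\G_1))=V(\G_1)$ and $\phi(V(\G_2))=V(\G_2)$.

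Concretely, I expect the argument to run through a short case analysis on $r$ and $k$: for $r\ge 2$ the graph $\G_1$ is a Babai graph on $2^{r+1}$ vertices, so $V(\G_1)$ and $V(\G_2)$ have sizes $2^{r+1}$ and $2k$ (or $18$ when $k=9$); one shows that even if $2^{r+1}=2k$, the valency sequences of $\G_1$ and $\G_2$ differ, because $\G_2$ is either a GRR (hence a Cayley graph, with controlled valency) or the specific dense graphs in Constructions~\ref{con3}, whereas $\G_1$ is the sparse Babai graph of valency governed by $|H|=2$. The degenerate low-order cases $r=1$ (where $\G_1$ is the $10$-vertex $C_4$-graph) and $k\in\{3,4,5\}$ are handled by hand using the explicit graphs. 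The main obstacle, and the place where real care is needed, will be ensuring the invariant actually separates the two vertex classes in every remaining case — i.e. ruling out a ``coincidental'' isomorphism between a piece of $\G_1$ (with its attached cross-edges) and a piece of $\G_2$ when the two halves happen to have equal size; this is exactly why $\G_2$ was constructed with the parity condition ``$k+\rho_{\G_2}(v)$ even'' and why the GRR-versus-complement choice was made in Construction~\ref{ip1}, and the proof should invoke those design choices to force the valencies apart. Once the vertex classes are separated, fixing each of $V(\G_1)$ and $V(\G_2)$ setwise is immediate, and the previous lemma then gives $\Aut(\G)\cong G$.
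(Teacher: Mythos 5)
Your proposal follows essentially the same route as the paper: the paper partitions each half into its unprimed and primed blocks, computes valencies via $\rho(v)=\rho_{\G_i}(v)+\nu_{\G_j}(v)$, and runs a case analysis on $r$ and $k$ in which the parity condition on $k+\rho_{\G_2}(v)$ forces $\rho(x_1)=5+k$ and $\rho(y_1)=\rho_{\G_2}(y_1)+2^r$ to differ, finishing with exactly the cross-neighbourhood count you sketch in step (2) ($2^r$ neighbours in $G/X$ for a vertex of $S_i$ versus $3$ for a primed vertex of $\G_1$). The degenerate cases are handled as you predict (explicit valency analysis for $3\le k\le 5$ and for $r=1$, and GAP for $r=1,\,k=9$), so your outline matches the paper's proof in all essentials.
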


\begin{proof}
First, we partition $V(\G_1)$, $V(\G_2)$ into the sets $V(X_i),\; V(Y_i)$, where $i\in\{1,2\}$, and 
\[ 
\;V(X_1)= \left\{
\begin{array}{ll}
      G/X,  &\; \te{if }  \;r\geq2\\
      V(\G_1), &\; \te{if }\;r=1, 
\end{array} 
\right. 
\;\;\;\;\;\;\;\;\;\;\;\;\;\;\;
\;V(X_2)= \left\{
\begin{array}{ll}
      (G/X)',  &\; \te{if }  \;r\geq2\\
      \emptyset, &\; \te{if }\;r=1, 
\end{array} 
\right. 
\]
\[\;\;\;\;\;\;\;
\;\;V(Y_1)= \left\{
\begin{array}{ll}
      G/\la y \ra,  &\; \te{if }  \;k\neq 9\\
      V(\G_2), &\; \te{if }\;k=9, 
\end{array} 
\right. 
\;\;\;\;\;\;\;\;\;\;\;\;\;\;\;
\;V(Y_2)= \left\{
\begin{array}{ll}
      (G/\la y \ra)', &\; \te{if }\;3\leq k \leq 5 \\
      \emptyset,  &\; \te{if }  \;k\geq 6.
\end{array} 
\right. 
\]
We will examine all possible values of $r,k$ in order to show that $V(\G_2)$ is fixed by $\phi$, using the automorphisms' property to preserve the valency of the vertices permuting.

The valency of a vertex $v\in V(\G_i)$ in $\G$ is
\begin{equation}\label{15.}
\rho(v)=\rho_{\G_i}(v)+\nu_{\G_j}(v),\end{equation} where $\nu_{\G_j}(v)$ is the number of neighbours of $v$ that lie in $V(\G_j)$ and $i,j\in\{1,2\}, i\neq j$.
\medskip\\
\textbf{Case 1:} Assume that $r\geq 2,\;k\geq 6$. Let $x_i\in V(X_i),\;y_i\in V(Y_i),\;i\in\{1,2\}$. Then, by (\ref{15.}), 
\begin{align*}
& \rho(x_1)=5+k, &
\rho(y_1)=\rho_{\G_2}(y_1)+2^r,   
\end{align*} which, together with the assumption that the number $k+ \rho_{\G_2}(y_1)$ is even, implies that $\rho(x_1)\neq \rho(y_1)$.

Suppose now that $\phi(v)\in V(\G_1)$, for some $v\in V(\G_2)$. Then $\rho(v)= \rho(y_1)\neq \rho(x_1)$, hence $\phi(v)\in V(X_2)$. Since $\rho(\phi(v))=\rho(x_2)\neq \rho(x_1)$, the set $V(X_1)$ is fixed by $\phi.$ Therefore, the number of neighbours of $v$ that lie in $V(X_1)$, which is $2^r$, is equal to the number of neighbours of $\phi(v)$ in $V(X_1)$, which is 3; a contradiction. Hence $V(\G_2)$ is fixed by $\phi$.
\medskip\\
\textbf{Case 2:} Suppose that $3\leq k\leq 5$; we assume that $G$ satisfies Proposition \ref{3a}, $iii)$, thus $r\geq 2$.
If $x_i\in V(X_i),\;y_i\in V(Y_i),\;i\in\{1,2\}$, then, by (\ref{15.}),
\begin{align*}
&\rho(x_1)=5+k, & \rho(y_1)= \rho_{\G_2}(y_1)+2^r, &
\\ &\rho(x_2)= 2^{r+1}, & \rho(y_2)= d+2 . \;\;\;\;\; \;\;\;\;\;&
\end{align*}
We will show that the sets of vertices $V(Y_2)$ and $V(Y_1)$ are fixed by $\phi$.
Considering all abelian groups of order $3,4$ or $5$, we conclude that the size of a minimal generating set of a generalised dihedral group of size $2k$, such that $3\leq k \leq 5$, is between 2 and 3; hence $4\leq \rho(y_2)\leq 5$.
On the other hand, by construction, min$\{\rho(x_1),\rho(x_2),\rho(y_1)\}>5 $. As graph automorphisms preserve the valency of the vertices they permute, $V(Y_2)$ is fixed by $\phi.$ It is implied that the set of neighbours of $V(Y_2)$, which is $V(Y_1)$, is also fixed by $\phi$.
\medskip\\
\textbf{Case 3:} Assume now that $r=1,\;k\geq 6,\;k\neq 9$.
First, we will compute the valency of each vertex of $\G$. By (\ref{15.}),
the valency of the vertices that lie in the sets of vertices $\{1,2\}$, $\{3,4,5,6\}$, $\{7,8,9,10\}$ is $k+4$, $k+5$, $k+3$, respectively, 
and the valency of the vertices $v\in V(\G_2)$ is $\rho _{\G_2}(v)+5$. 

Suppose that $\phi(v)\in V(\G_1)$ for some $v\in V(\G_2).$ Without loss of generality, we assume that $v\in S_1$. Graph automorphisms preserve the valency of the vertices they permute so \begin{align*}& \rho_{\G_2}(v)+2=k+j,& \text{for some $j\in\{0,1,2\}.$}\end{align*}
The graph $\G$ was constructed so that the number $\rho_{\G_2}(y_1)+ k$ is even, hence $j$ is even. In other words, the set of vertices $\{1,2\}$ is fixed by $\phi$, as is either the set $\{7,8,9,10\}$ or the set $\{3,4,5,6\}$. The vertex $1\in V(\G_1)$ is connected to all other vertices in $T_1=\{1,3,5,7,9\}$ and no vertex in $T_2$. Furthermore, the set of neighbours of $v$ that lie in $V(\G_1)$ is $T_1$. These properties combine to say that $\phi(v)$ is adjacent to either the vertices $1,3+2j,5+2j$ or the vertices $2,4+2j,6+2j$. However, there exists no vertex in $V(\G_1)$ that is adjacent to any of these triplets of vertices. Thus, $\phi$ fixes $V(\G_2)$. 
\medskip\\
\textbf{Case 4:}
Finally, let $r=1,\;k=9.$ We confirmed that the graph $\G$ on 28 vertices constructed has the desired property using the mathematical software GAP \cite{gap}.
\end{proof}
The last step in the proof of Proposition \ref{3a} is to show that the order of the graph $\G$ constructed is bounded by the order of the group $G$.
\begin{lemma}
The graph $\G$, defined in Construction \ref{ip1}, has at most $|G|$ vertices.
\end{lemma}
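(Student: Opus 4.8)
The strategy is to count the vertices of $\G=\G_1\cup\G_2$ by adding $|V(\G_1)|$ and $|V(\G_2)|$, express both in terms of the parameters $r$ and $k$ introduced before Construction \ref{ip1} (recall $\la y\ra\cong C_{2^r}$, $|X|=k$, so $|G|=2^{r+1}k$), and check the inequality $|V(\G_1)|+|V(\G_2)|\leq 2^{r+1}k$ in each of the construction's cases. First I would record, straight from Construction \ref{ip1}, that $\G_1$ has $2\cdot 2^{r+1}$ vertices when $r\geq 2$ (it is Babai's $X$ together with $X'$ on the group $G/X\cong Q_{2^{r+1}}$ of order $2^{r+1}$), and $10$ vertices when $r=1$ (the $C_4$-graph of Example \ref{opq}). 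Similarly $\G_2$ has $2\cdot 2k$ vertices when $3\leq k\leq 5$ (Babai's construction on $G/\la y\ra\cong\Dih(X)$ of order $2k$), exactly $2k$ vertices when $k\geq 6$ and $k\neq 9$ (a GRR for $G/\la y\ra$ or its complement, on $|G/\la y\ra|=2k$ vertices), and $18$ vertices when $k=9$ (the graph $\G_2$ of Construction \ref{4grrrr}, note $18=2\cdot 9=2k$).

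Then I would dispose of the cases. \textbf{Case $r\geq 2$, $k\geq 6$:} here $|V(\G)|=2^{r+2}+2k$, and since $r\geq 2$ gives $2^{r+2}=4\cdot 2^r\leq 2^r k$ (using $k\geq 6>4$) while $2k\leq 2^{r+1}k - 2^r k$ would need checking more carefully — better: $2^{r+1}k - (2^{r+2}+2k) = 2^{r+1}(k-2) - 2k = 2k(2^r-1) - 2^{r+2} \geq 2k\cdot 3 - 2^{r+2}$, and one shows $6k\geq 2^{r+2}$ fails in general, so instead I would argue directly that $2^{r+2}+2k\leq 2^{r+1}k$ is equivalent to $2^{r+1}\leq k(2^{r+1}-2)=2k(2^r-1)$, i.e. $2^r\leq k(2^r-1)$, which holds since $k\geq 2$ and $2(2^r-1)\geq 2^r$ for $r\geq 1$. \textbf{Case $r\geq 2$, $3\leq k\leq 5$:} $|V(\G)|=2^{r+2}+4k\leq 2^{r+1}k$ iff $2^{r+1}+4\leq 2^{r+1}\cdot\frac{k}{2}\cdot\frac{?}{}$ — concretely, $2^{r+1}k-2^{r+2}-4k=2^{r+1}(k-2)-4k\geq 2^{r+1}(k-2)-4k$, and for $k\geq 3$ this is $\geq 2^{r+1}-4k\geq 8-20$ only for small $r$, so here I would use $r\geq 2$ sharply: for $k=3$, $2^{r+1}\cdot 3 - 2^{r+2}-12 = 2^{r+1}-12\geq 16-12>0$; for $k=4,5$ the slack only grows. \textbf{Case $r=1$:} then $|G|=4k$ and $|V(\G_1)|=10$; with $k\geq 6$ we get $|V(\G)|=10+2k\leq 4k$ iff $k\geq 5$, true, and with $k=9$ we get $10+18=28=|G|$, i.e. equality (this is the $28$-vertex graph flagged in Case 4 of the previous lemma).

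The one genuinely delicate point is the boundary behaviour: in the cases $r=1,k\geq 5$ and especially $r=1,k=9$ the bound is tight or nearly tight, so the arithmetic must be done exactly rather than with crude estimates, and one must keep straight which version of $\G_2$ (GRR vs.\ complement vs.\ the special $18$-vertex graph) is in play — but in all of them $|V(\G_2)|$ equals $2k$ or $4k$, never more, so no case actually exceeds $|G|=2^{r+1}k$. I would close by remarking that the inequality is in fact an equality precisely when $r=1$ and $k=9$, consistent with the value $\alpha(\Dic(A,b^2))$ being realized on $|G|$ vertices there. The write-up is then just a short case table; the main obstacle is purely bookkeeping — correctly pinning down $|V(\G_1)|$ and $|V(\G_2)|$ from the (somewhat involved) definition in Construction \ref{ip1} in every branch of the $(r,k)$ split.
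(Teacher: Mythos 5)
Your approach is the same as the paper's: compute $|V(\G_1)|$ and $|V(\G_2)|$ in each branch of Construction \ref{ip1} and verify $|V(\G_1)|+|V(\G_2)|\leq |G|=2^{r+1}k$ case by case. Your vertex counts are right, and your handling of the cases $r\geq 2,\ k\geq 6$ and $r=1,\ k\geq 6$ is fine. But there is a genuine gap in the small-$k$ cases: the inequality is simply \emph{false} for several parameter pairs, and these must be excluded by appealing to hypothesis $iii)$ of Proposition \ref{3a}, which you never invoke. Concretely, for $(r,k)=(2,3)$ one gets $|V(\G)|=2^{4}+12=28>24=|G|$, and for $(r,k)=(1,3),(1,4),(1,5)$ one gets $22>12$, $26>16$, $30>20$ respectively. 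These pairs correspond exactly to $G=\Dic_6$, $\Dic_3$, $G_{16}$, $\Dic_5$, the groups excluded in Proposition \ref{3a}, $iii)$; the paper uses this to conclude $r\geq 3$ when $k=3$ and $r\geq 2$ when $4\leq k\leq 5$. Your case split omits $r=1$ with $3\leq k\leq 5$ entirely, and your estimate for $k=3$ (``$2^{r+1}-12\geq 16-12$'') silently assumes $2^{r+1}\geq 16$, i.e.\ $r\geq 3$, under a case you labelled $r\geq 2$. Without the exclusion argument the proof does not go through.

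Two smaller corrections: for $r=1,\ k=9$ you write $10+18=28=|G|$, but $|G|=2^{r+1}k=36$, so this case is strict inequality, not equality; and consequently your closing remark that equality holds precisely at $(r,k)=(1,9)$ is wrong --- equality occurs at $(r,k)=(2,4)$, where $|V(\G)|=16+16=32=|G|$. Neither slip affects the truth of the bound, but the equality claim should be dropped or fixed.
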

\begin{proof} The graph $\G$ was constructed so that the set $V(\G_1) $ has size 10, for $r=1$, and $2|G/X|$, for $r\geq 2$. Moreover, the size of $V(\G_2) $ is $2|G/\la y\ra|$, for $3\leq k\leq 5$, and $|G/\la y\ra|$, for $k\geq 6.$ Thus,
\[|V(\G)|=|V(\G_1)|+|V(\G_2)|={\te{max}}\Big\{2\;\frac{|G|}{|X|},\;10\Big\}+\Big(1+\Big\lfloor\frac{5}{k}\Big\rfloor\Big)\;\frac{|G|}{|\la y \ra|},\]
where $\lfloor\frac{5}{k}\rfloor $ is the integer part of the real number $\frac{5}{k}$. Let us now explain why $|V(\G)|\leq |G|$.\\
If $k=3$ then the assumption that $r\geq 3$
\big(Proposition \ref{3a}, $iii)$\big)
implies that  $\;|V(\G)|= \frac{2}{3}|G|+\frac{1}{2^{r-1}}|G|<|G|.$ 
Similarly, if $4\leq k\leq 5$ then $r\geq2$, 
hence $|V(\G)|\leq \frac{1}{2}|G|+\frac{1}{2^{r-1}}|G|\leq|G|.$
Finally, if $k\geq 6$ then $|V(\G)|\leq \frac{1}{3}|G|+\frac{1}{2^{r}}|G|<|G|.$
\end{proof}

\section{Proof of Theorem \ref{1}: the bound $\alpha(G)>|G|$}

In this section we prove the bound $\alpha(G)> |G|$ and compute $\alpha(G)$ for groups $G$ that are non-abelian and satisfy one of $iii),iv)$ in Theorem \ref{1}.

\begin{theorem}{\label{203}}
Let $G$ be a group such that one of the following holds:
\begin{enumerate}[$i)$]
     \item$G$ is a generalised quaternion group,
     \item $G$ is a generalised dicyclic group of the form $Q_{2^r}\times C_2$,
    \item$G$ is one of the non-abelian groups that appear in Table \ref{table:kd}.
\end{enumerate}
Then $\alpha(G)> |G|$;
indeed, if $i)$ holds, $\alpha(G)=2|G|$; if $ii)$ holds, $\alpha(G)=|G|+2$; if $iii)$ holds, $\alpha(G)$ is as shown in Table \ref{table:kd}.
\end{theorem}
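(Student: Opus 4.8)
The plan is, for each of the two infinite families and the seven groups of Table~\ref{table:kd}, to prove matching upper and lower bounds on $\alpha$: a construction realising the stated value, and an argument that no smaller graph works. Every lower bound begins from the remark that if $\Aut(\G)\cong G$ then $G$ acts faithfully on $V(\G)$, so the orbit lengths are indices of subgroups of $G$ and the point stabilisers control the local structure of $\G$; the one tool that forbids a graph on which $G$ acts \emph{regularly} is the GRR-Theorem (Theorem~\ref{23b}), and it applies to every group here, each being either generalised dicyclic or one of the groups of Table~\ref{table:kysymys}.

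First I would dispose of $G=Q_{2^r}$: the equality $\alpha(G)=2|G|$ is already known~\cite{grav}, so the proof merely records it. Its upper bound is Babai's Theorem~\ref{30}, realised by Construction~\ref{con0} (which is connected, $Q_{2^r}$ being non-cyclic of order $\ge 8$), and its lower bound uses the unique involution $z$ of $Q_{2^r}$, which lies in every non-trivial subgroup: in any $\G$ with $\Aut(\G)\cong Q_{2^r}$, every stabiliser is trivial or contains $z$, so $z$ acts non-trivially and a regular orbit exists, and since $Q_{2^r}$ has no GRR this forces $|V(\G)|\ge 2^{r+1}$. For $G=Q_{2^r}\times C_2$ the upper bound $\alpha(G)\le|G|+2=2^{r+1}+2$ is realised by $\G'\sqcup K_2$, where $\G'$ is the connected graph on $2^{r+1}$ vertices given by Construction~\ref{con0} for $Q_{2^r}$; the two components being connected of distinct orders, $\Aut(\G'\sqcup K_2)\cong\Aut(\G')\times\Aut(K_2)\cong Q_{2^r}\times C_2$. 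For the lower bound the structural fact I would exploit is that all three involutions of $G$ are central (they are the non-trivial elements of $Z(G)\cong C_2\times C_2$), so every non-trivial subgroup of $G$ meets $Z(G)$, and every point stabiliser is therefore trivial or contains a central involution. If $\Aut(\G)\cong G$ with $|V(\G)|\le|G|+1<2|G|$ then at most one orbit is regular. When an orbit $R$ is regular, the at most one further vertex is $G$-fixed with a $G$-invariant, hence empty or full, neighbourhood in $R$, so $\G$ is a Cayley graph of the GRR-less group $G$ up to one vertex that does not alter $\Aut$, a contradiction. When no orbit is regular, $V(\G)$ is covered by the fixed-point sets $\mathrm{Fix}(z),\mathrm{Fix}(c),\mathrm{Fix}(zc)$, and I would study the action of $G$ on each of these three ``sheets'' — each central involution acting freely off its own fixed set, with $G/\la z\ra$, $G/\la c\ra\cong Q_{2^r}$ and $G/\la zc\ra\cong Q_{2^r}$ acting on the respective sheets — in order to produce an automorphism of $\G$ outside $G$.

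For the groups of Table~\ref{table:kd} the problem is finite ($|G|\le 25$). For the upper bounds I would give an explicit graph on the stated number of vertices with automorphism group $G$: for $Q_8\times C_3$ one can take the disjoint union of a graph realising $Q_8$ on $16$ vertices and one realising $C_3$ on $\alpha(C_3)=9$ vertices (Proposition~\ref{27b}), which has $16+9=25$ vertices and automorphism group $Q_8\times C_3$ since no component of one can coincide with a component of the other; for $\Dic_3,\Dic_5,\Dic_6,G_{16}$ one uses a layered construction in the spirit of Construction~\ref{ip1} (a block modelled on a near-regular action joined along a bipartite part to a block modelled on a proper quotient), and for $A_4$ and $G'_{16}$ an analogous ad hoc construction; the identity $\Aut(\G)\cong G$ is then verified directly, with GAP~\cite{gap} for the smallest instances. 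For the lower bounds I would enumerate, for each such $G$ and each $n\le\alpha(G)-1$, the admissible orbit partitions of a faithful $G$-action on $n$ points and rule each out: the regular case at once by Theorem~\ref{23b}, and the intransitive cases by the valency-and-neighbourhood bookkeeping already used in Section~4, with GAP for the remaining finite checks.

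The difficulty is concentrated in the lower bounds. For the finite groups of Table~\ref{table:kd} the case analysis is long but routine once organised by orbit partition and combined with the no-GRR property. The one point demanding a genuinely new argument is the ``no regular orbit'' configuration for the family $Q_{2^r}\times C_2$: one must show that three central involutions whose fixed-point sets together cover at most $2^{r+1}+1$ vertices always leave room for a graph automorphism outside $G$. I would spend most of the effort there, using that two of the three sheets carry a faithful action of a quotient isomorphic to $Q_{2^r}$, so that the bound $\alpha(Q_{2^r})=2^{r+1}$ together with minimal-permutation-degree considerations forces those sheets to be fairly large, and deriving the desired extra automorphism from the rigidity this imposes on the remaining part of $\G$.
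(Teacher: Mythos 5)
Your overall architecture (upper bounds by construction, lower bounds by orbit analysis, the GRR-Theorem to dispose of regular orbits) matches the paper's, and your upper-bound constructions are essentially the ones used there. But the lower bounds, which carry all the content of Theorem \ref{203}, are not actually proved in your proposal, and in two places the reasoning you do give would fail. First, for $G=Q_{2^r}$ you argue that a regular orbit exists and that, since $Q_{2^r}$ has no GRR, this forces $|V(\G)|\ge 2|G|$. That inference is invalid: the absence of a GRR only rules out $|V(\G)|=|G|$ exactly, and says nothing about a graph consisting of one regular orbit plus a few further vertices, so it cannot exclude $|G|<|V(\G)|<2|G|$. (Your citation of \cite{grav} rescues the statement, but not the argument.) The paper closes exactly this gap with Lemma \ref{pq14}: assuming there is only one orbit of size $2^{r+1}$, it exhibits an explicit extra automorphism, namely the map multiplying the half-orbit $B=\{y^kbw\}$ by the central involution $b^2$ and fixing everything else, and verifies case by case that adjacency is preserved.

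Second, and decisively, you yourself flag that the ``no regular orbit'' configuration for $Q_{2^r}\times C_2$ demands a genuinely new argument, and what you offer for it (covering $V(\G)$ by the fixed-point sets of the three central involutions, then ``minimal-permutation-degree considerations'' and ``rigidity'') is a plan rather than a proof; note also that the action of $G$ on each sheet factors through the corresponding quotient but need not be faithful for it, which undercuts the degree bound you want to invoke. The paper's Proposition \ref{p2066} avoids the regular/non-regular dichotomy entirely: it chooses $w$ with $b^2\notin G_w$, a second vertex $u$ with stabilizer in $\{\la x\ra,\la b^2x\ra\}$ when one exists, and shows directly that the map sending $B=\{y^kx^lbz\mid z\in\{w,u\}\}$ to $b^2B$ and fixing the rest is an automorphism --- the same rogue-automorphism device as in Lemma \ref{pq14}. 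That device, adapted group by group, is also what drives the lower bounds for $\Dic_3$, $\Dic_5$, $\Dic_6$, $G_{16}$, $G'_{16}$, $Q_8\times C_3$ and $A_4$; your plan to replace those arguments by the valency-and-neighbourhood bookkeeping of Section 4 plus GAP misidentifies the task (that bookkeeping constrains automorphisms from above, whereas here one must \emph{produce} an automorphism outside $G$), and an exhaustive machine check over all faithful actions and invariant graphs on up to $24$ vertices cannot simply be asserted. In short, the missing idea throughout is the explicit construction of an extra automorphism supported on half of a suitably chosen orbit.
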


The proof of Theorem \ref{203} is the subject of this section. Specifically, we compute the value of $\alpha(G)$ when $G$ is contained in one of the families of groups
mentioned in 
Theorem \ref{203}, $i),ii)$, in Propositions \ref{2t} and \ref{p2066}. Then, we calculate $\alpha(G)$ for the non-abelian groups $G$ that are shown in Table \ref{table:kd} in Propositions \ref{2c}, \ref{k3}, \ref{p5s} and \ref{arlu}.

Let us start by presenting two lemmas that will be used throughout the section.


\begin{lemma}\label{pq14}
Let $G$ be the dicyclic group of order $2^{r+1}q$, where $q$ is an odd prime or $q=1$. Let $\G$ be a graph such that $G\cong \Aut(\G)$ and consider the action of $G$ on the vertex set $V(\G)$. If every orbit has size at most $\te{\normalfont max}\{{2^{r+1},2^rq}\}$ then there exist at least 2 orbits of size $2^{r+1}$. 
\end{lemma}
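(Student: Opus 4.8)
The plan is to analyze the possible orbit sizes under the action of $G=\Dic_{2^rq/2}$ on $V(\G)$ and use a counting argument together with the structure of $G$. First I would note that, since $\Aut(\G)\cong G$ acts faithfully on $V(\G)$, the pointwise stabilizer of $V(\G)$ is trivial, so the intersection of all vertex stabilizers $G_v$ is trivial. Each orbit $\mathcal{O}v$ has size $|G|/|G_v| = 2^{r+1}q/|G_v|$, and the hypothesis that every orbit has size at most $\max\{2^{r+1},2^rq\}$ forces each $|G_v|$ to be divisible by a suitable factor: if $2^{r+1}\ge 2^rq$ (i.e.\ $q\le 2$, so $q=1$) then every $|G_v|\ge q=1$ trivially, and if $2^rq > 2^{r+1}$ (i.e.\ $q\ge 3$) then every orbit size $\le 2^rq$ means $|G_v|\ge 2$, so every vertex stabilizer is nontrivial.

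The key structural fact I would exploit is that $G=\Dic_{2^rq/2}=\langle a,b\mid a^{2^rq}=1,\ b^2=a^{2^{r-1}q},\ bab^{-1}=a^{-1}\rangle$ has a \emph{unique} involution, namely $z=a^{2^{r-1}q}=b^2$, which is central; indeed every subgroup of order $2$ must be $\langle z\rangle$, and more relevantly every nontrivial subgroup of $G$ contains $z$ (this is the characteristic property of generalised quaternion / dicyclic groups of $2$-power-times-odd order — every element of order $2$ equals $z$, and any nontrivial subgroup has even order when... ) — more carefully, I would argue: the cyclic subgroup $\langle a\rangle$ has index $2$, so any subgroup $H$ either lies in $\langle a\rangle$, in which case $H$ cyclic of even order contains $z$ unless $H$ has odd order, or $H\not\le\langle a\rangle$, in which case $H$ contains some $a^ib$, whose square is $a^ia^{-i}b^2=z$, so $z\in H$. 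Thus the only subgroups avoiding $z$ are the subgroups of the odd-order cyclic group $\langle a^{2^r}\rangle\cong C_q$. Consequently, if $G_v$ is a vertex stabilizer with $z\notin G_v$, then $G_v\le C_q$, so $|G_v|\in\{1,q\}$ and the orbit $\mathcal{O}v$ has size $2^{r+1}q$ or $2^{r+1}$; the first is excluded by hypothesis (it exceeds $\max\{2^{r+1},2^rq\}$), leaving orbit size exactly $2^{r+1}$.

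Now the main step: since the vertex stabilizers intersect trivially and $z$ lies in every nontrivial subgroup of $G$, there must exist at least one vertex $v$ with $z\notin G_v$, i.e.\ $G_v=1$ or $G_v\le C_q$ nontrivially; combined with the previous paragraph, there is at least one orbit of size $2^{r+1}$. To get a \emph{second} such orbit, I would suppose for contradiction there is exactly one orbit $\Omega$ of size $2^{r+1}$ and all other orbits have stabilizers containing $z$. Then $z$ fixes every vertex outside $\Omega$ pointwise. The restriction of $z$ to $\Omega$: here $G_v=1$ or $G_v\le C_q$; if $G_v\le C_q$ then $z\notin G_v$ so $z$ acts nontrivially on $\Omega$, but then $z$ is a nonidentity automorphism supported entirely on $\Omega$. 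I would then derive a contradiction by exhibiting an extra automorphism: a graph automorphism that acts as $z$ on $\Omega$ and trivially elsewhere, or by noting $z$ must be an involution of $\Aut(\G)$ whose support is a single orbit, and then building a distinct involution (e.g.\ extending a suitable permutation of $\Omega$) — this uses that $2^{r+1}>q$ type size comparisons to rule out coincidences. The cleanest route: the support of $z$ being exactly $\Omega$ of size $2^{r+1}$ forces the induced permutation on $\Omega$ to be a product of $2^r$ transpositions (as $z$ has order $2$ and acts semiregularly on $\Omega$ with $G_v\cap\langle z\rangle$ considerations); one then checks that the subgroup of $\Aut(\G)$ of elements trivial outside $\Omega$ must, together with $G$ acting on $\Omega$, generate something strictly larger than $G$ unless there is a second free-ish orbit absorbing $z$, contradicting $\Aut(\G)\cong G$.

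The main obstacle I anticipate is making the last contradiction fully rigorous: ruling out the configuration where $z$ is supported on a single orbit of size $2^{r+1}$ requires carefully tracking how $\Aut(\G)$ decomposes across orbits and showing no ``hidden'' automorphism is forced — in particular handling the interaction between the $C_q$-type stabilizers on that orbit and the central involution, and ensuring the count of orbits of size $2^{r+1}$ is genuinely at least two rather than the argument only producing one orbit plus a parity obstruction. I expect the resolution to hinge on the observation that $z\in\Aut(\G)$ and that, in a vertex-transitive-on-$\Omega$ situation, if $\Omega$ were the unique orbit on which $z$ acts nontrivially then one could compose $z$ with an element of $G$ to produce an automorphism fixing $\Omega$ pointwise but moving other vertices in a way incompatible with $|\Aut(\G)|=|G|$.
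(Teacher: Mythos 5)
Your first half is sound and matches the paper: faithfulness gives a vertex $w$ with $b^2\notin G_w$, the unique-involution property of dicyclic groups forces $|G_w|$ odd, hence $2^{r+1}$ divides $|\ma w|$, and the hypothesis on orbit sizes then pins $|\ma w|=2^{r+1}$. The gap is in the second half, and you have correctly located it yourself: knowing that $b^2$ is an involution of $\Aut(\G)$ whose support is exactly the single orbit $\Omega=\ma w$ is \emph{not} by itself a contradiction, and none of your proposed finishes closes it. Composing $b^2$ with elements of $G$ only produces elements of $G$; a counting or "support" argument does not force $\Aut(\G)$ to be larger than $G$; and there is no reason a priori that an involution of $\Aut(\G)$ cannot have support equal to one orbit. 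What is needed is the explicit construction of a permutation of $V(\G)$ that preserves adjacency but is \emph{not} induced by any element of $G$.

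The missing idea is the classical "half-orbit flip" (the same device Nowitz and Watkins use to show generalised dicyclic groups have no GRR). Writing $G=\la y,x,b\ra$ with $\la y,x\ra$ the abelian index-$2$ subgroup and $B=\{y^kbw\mid k\in\N\}$ the half of $\ma w$ lying in the coset $\la y,x\ra b\cdot w$, one defines $\phi$ to be multiplication by $b^2$ on $B$ and the identity everywhere else. Adjacency is preserved: within $B$ because $b^2\in\Aut(\G)$; between $\ma w\setminus B$ and $B$ because for $g_1\in\la y,x\ra$ and $g_2\in\la y,x\ra b$ the dicyclic relation gives $g_1g_2^{-1}g_1=b^2g_2$, so $g_1w\sim g_2w\iff g_1w\sim b^2g_2w$; and between $B$ and vertices outside $\ma w$ because (under your contradiction hypothesis) every such vertex has $b^2$ in its stabilizer. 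Then $\phi$ fixes $w$ but moves $bw$, whereas every element of $G$ fixing $w$ lies in $G_w=G_{bw}=\la x\ra$ and hence fixes $bw$; so $\phi\notin G$, contradicting $\Aut(\G)\cong G$. Without this construction (or an equivalent one), your argument does not yield the second orbit of size $2^{r+1}$.
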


\begin{proof} Let $G=\la y,x,b\mid y^{2^r}=x^q=1,y^{2^{r-1}}=b^2,yx=xy,byb^{-1}=y^{-1},bxb^{-1}=x^{-1} \ra.$\\
As the action of $G$ on $V(\G)$ is faithful, there exists a vertex $w$ of $\G$ such that $b^2\notin G_{w}$. Then, since $b^2$ is the only element of order 2, by Cauchy's Theorem, $2$ does not divide $|G_w|$. Therefore, by the orbit-stabilizer lemma, $2^{r+1}$ divides $|\mathcal{O}_w|.$ Thus $|\mathcal{O}_w|=2^{r+1}$, since $|\mathcal{O}_w|\leq \te{\normalfont max}\{{2^{r+1},2^rq}\}$. 
Suppose that $\mathcal{O}_w$ is the only orbit of size $2^{r+1}$.

Let $B=\{y^kbw\mid k\in \N\}$.
We will show that the map $\phi:V(\G)\rightarrow V(\G)$, where
\[ 
\phi(v)= {\left\{ 
\begin{array}{ll}
      b^2v,\;&\; \text{  if }v\in B,\\
      v, \;&\;\text{  if }v\notin
     B,
\end{array} 
\right. }\]
is an automorphism of $\G$. 
Indeed, the property $v_1\sim v_2\iff \phi(v_1)\sim \phi(v_2)$ holds for every $v_1,v_2\in V(\G)$ as
\begin{itemize}
    \item for $v_1,v_2\in B$, $ b^2\in \Aut(\G)$ hence $v_1\sim v_2\iff b^2v_1\sim b^2v_2$,
    \item for $v_1,v_2\notin B$, clearly $v_1\sim v_2\iff \phi(v_1)\sim \phi(v_2)$, 
    \item for $v_1=g_1w\notin B,v_2=g_2w\in B,g_1\in\la y,x\ra,g_2\in\la y,x\ra b$, we have $g_1w\sim g_2w\iff g_1g_2^{-1}g_1w \sim g_1w \iff b^2g_2w \sim g_1w$, since $ g_1g_2^{-1}\in \Aut(\G)$ and  $g_1g_2^{-1}g_1=b^2g_2$,
    \item for $v_1\notin \ma w,v_2\in B,$ we have that $v_1\sim v_2\iff v_1\sim b^2v_2$, as $b^2\in \Aut(\G) $ and $b^2\in G_{v_1}$, by the assumption that $2$ divides $|G_{v_1}|$.
\end{itemize}
We have reached a contradiction since
$G_w=G_{bw}=\la x\ra$. Hence there exists a second orbit of size $2^{r+1}.$
\end{proof}

\begin{lemma}\label{ika2}
Let $G=\Dic_q$, $q\in\{3,5\}$, and let $\G$ be a graph on at most $4q+4$ vertices such that $\Aut(\G)\cong G$. Then, there is no orbit of size $|G|=4q$ in the action of $G$ on $V(\G)$.
\end{lemma}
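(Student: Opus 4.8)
The plan is to argue by contradiction: suppose $\G$ is a graph on at most $4q+4$ vertices with $\Aut(\G)\cong G=\Dic_q$ and suppose the action of $G$ on $V(\G)$ has an orbit $\ma w$ of size $|G|=4q$. The remaining vertices, of which there are at most $4$, split into further orbits; since $|G|=4q$ with $q\in\{3,5\}$, the divisors of $|G|$ that are at most $4$ are $1,2,3,4$ (for $q=3$) or $1,2,4$ (for $q=5$), so these small orbits are severely constrained. First I would record the structure of $G$ via a presentation $G=\la y,b\mid y^{2q}=1,\;y^q=b^2,\;byb^{-1}=y^{-1}\ra$, noting that $b^2=y^q$ is the unique element of order $2$ and that every subgroup of order $2$ equals $\la b^2\ra$. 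On the orbit $\ma w$ of full size $4q$, the stabilizer $G_w$ is trivial, so $G$ acts regularly there; in particular, restricting to $\ma w$, the induced subgraph is (an orientation-free version of) a Cayley graph $\mathrm{Cay}(G,S)$ for some inverse-closed $S\subseteq G\setminus\{1\}$.

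The key idea, mirroring the swap-automorphism trick in Lemma \ref{pq14}, is to produce a non-identity automorphism of $\G$ that is \emph{not} in $G$, contradicting $\Aut(\G)\cong G$ (which forces $|\Aut(\G)|=4q$, so $G$ must already be the full automorphism group). I would look for an involution $\phi$ that acts as $b^2$ on a suitable $b^2$-invariant "half" of $\ma w$ and trivially elsewhere, exactly as in Lemma \ref{pq14}: take $B=\{y^k b\,w\mid k\in\N\}$ (the coset $\la y\ra b\cdot w$), define $\phi$ to be multiplication by $b^2$ on $B$ and the identity off $B$, and check the adjacency-preservation case by case — the cases being $v_1,v_2\in B$; $v_1,v_2\notin B$ but both in $\ma w$; one in $B$ and one in $\ma w\setminus B$; and finally $v_1$ outside $\ma w$ altogether with $v_2\in B$. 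For the last case one needs $b^2\in G_{v_1}$ for every vertex $v_1$ outside $\ma w$: this is where the hypothesis $|V(\G)|\le 4q+4$ does its work. A vertex outside $\ma w$ lies in an orbit of size at most $4$, hence of size $1,2,3,$ or $4$; by the orbit–stabilizer lemma its stabilizer has index $1,2,3$ or $4$, so order $4q,2q,\frac{4q}{3}$ or $q$; in each case — using that $\frac{4q}{3}$ is an integer only when $q=3$, giving order $4$ — the stabilizer has even order except possibly when it has order $q$ (index $4$). So the genuinely delicate point is ruling out, or handling, orbits of size exactly $4$ on which $b^2$ acts nontrivially.

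I expect that size-$4$ orbit to be the main obstacle, and I would dispatch it as follows. If there is an orbit $\mathcal{O}$ of size $4$ with $G_v$ of order $q$ for $v\in\mathcal{O}$, then $G_v$ is the (unique, by Sylow) subgroup $\la y^2\ra$ of order $q$ if $q$ is odd — more precisely $G_v$ is a subgroup of order $q$, which is normal in $G$, so it fixes \emph{every} vertex of $\mathcal{O}$ pointwise and the induced action of $G/G_v\cong C_4$ on $\mathcal{O}$ is regular. Now count: apart from $\ma w$ (size $4q$) the only way to use up at most $4$ further vertices is a single orbit of size $4$ (or $\le 4$ fixed/size-$2$/size-$3$ pieces); I would argue that in the presence of such an $\mathcal{O}$ one can still build an automorphism outside $G$ by combining the $b^2$-swap on $B\subseteq\ma w$ with a compatible $4$-cycle-type adjustment on $\mathcal{O}$, or alternatively show directly that $\ma w$ together with $\mathcal{O}$ cannot have $G$ as full automorphism group because the $G_v=\la y^2\ra$-fixed structure forces extra symmetry. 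The cleanest route is probably: since $\la y^2\ra\cong C_q$ is normal and acts trivially on all of $V(\G)\setminus\ma w$ while acting freely (in $q$-cycles) on $\ma w$, any permutation of $\ma w$ commuting with the $G$-action and fixing the "$\la y^2\ra$-orbit partition" extends by the identity off $\ma w$ to an automorphism; and because $b^2$ alone already gives such a permutation when restricted to the half $B$, we recover the contradiction. Assembling these cases — the generic even-stabilizer case handled verbatim as in Lemma \ref{pq14}, plus the single exceptional size-$4$ orbit handled by normality of the order-$q$ subgroup — completes the proof.
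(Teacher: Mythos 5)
Your setup matches the paper's: assume an orbit $\ma w$ of size $4q$ (so $G_w=1$), let $B$ be the image of the nontrivial coset of the index-two cyclic subgroup, and try to show that the map which is $b^2$ on $B$ and the identity elsewhere is an automorphism. You also correctly isolate the only real obstruction: a possible orbit of size $4$ with stabilizer of odd order $q$, on which $b^2$ acts without fixed points. But that is exactly where your argument stops being a proof. Your first suggestion (``a compatible $4$-cycle-type adjustment on $\mathcal{O}$'') is the right idea --- it is what the paper does, by first normalising (replace $\G$ by its complement so that the base point $w$ of the big orbit has at most two neighbours in the size-$4$ orbit $\ma u$, and arrange $w\sim u$ if it has any) and then defining the correction on $\ma u$ by an explicit involution, $b^ku\mapsto b^{-k}u$ or $b^ku\mapsto b^{(-1)^k l}b^ku$, the choice depending on which of $bu,b^3u$ the vertex $w$ sees --- but you do not carry any of this out, and that choice together with the verification of its compatibility with the $b^2$-swap on $B$ is the actual content of the lemma.

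Your proposed ``cleanest route'' rests on a false claim. You assert that $b^2$ restricted to $B$ respects the $\la y^2\ra$-orbit partition of $\ma w$, so that extending by the identity off $\ma w$ gives an automorphism. In $\Dic_q$ with $q$ odd we have $b^2=y^q\notin\la y^2\ra$, so $b^2$ sends the $\la y^2\ra$-orbit of $y^kbw$ to the different orbit of $y^{k+q}bw$. Concretely, for $v_1\in B$ and $v_2=b^ju$ in the size-$4$ orbit, the map that is $b^2$ on $B$ and the identity on $\ma u$ preserves the edge $[v_1,v_2]$ only if $v_1\sim b^ju\iff v_1\sim b^{j+2}u$, which is not automatic; this is precisely the failure your normality argument was meant to rule out, and it cannot. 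So the exceptional size-$4$ orbit must be handled by genuinely modifying the map on $\ma u$, not by the identity extension; as it stands the proposal has a gap at its central step.
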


\begin{proof} Let $G=\la x,b \ra$, where $x^q=b^4=1$. 

Suppose that there is a vertex $v\in V(\G)$ with stabilizer $G_v=\{1\}$.
If there exists an orbit of size 4, let $u\in V(\G)$ be a vertex with stabilizer $G_u=\la x \ra$. By possibly replacing the graph $\G$ with its complement, $\overline{\G}$, we assume that $v$ is adjacent to up to two vertices in the orbit $\ma u$, if it exists. Without loss of generality, we also assume that if $v$ is adjacent to a vertex in $\ma u$ then
$v\sim u$. 
Let $B=\big\{x^kb^lv\mid k\in \N,l\in\{1,3\}\big\}$ and let $\phi:V(\G)\rightarrow V(\G)$,\vspace{-1 mm}
\[ 
\phi(w)={\left\{ 
\begin{array}{ll}
      b^2w,\; &\; \text{  if }w\in B,
      \\b^{(-1)^k l}w, \;&\;\text{  if }  w=b^ku \te{ and }
     v \sim b^lu,
       \te{ where}\;k\in\mathbb{N}, l\in\{1,3\},
          \\b^{-k}w, \;&\;\text{  if }  w=b^ku\te{ and }
      v \nsim b^lu,  \forall\;l\in\{1,3\},
      \te{ where}\;k\in\mathbb{N},\\
      w, \;&\;\text{  if }2 \text{  divides } |G_{w}| \text{ or }w\in \ma{_v}\setminus B.
\end{array} 
\right.} \]
We will show that $\phi\in \Aut(\G)$.
The property $v_1\sim v_2\iff \phi(v_1)\sim \phi(v_2)$ holds for every $v_1,v_2\in V(\G)$, as
\begin{itemize}
    \item if $v_1,v_2\in B$ then $v_1\sim v_2\iff b^2v_1\sim b^2v_2$, since $ b^2\in \Aut(\G)$,

      \item if $v_1,v_2$ are fixed by $\phi$ then clearly $v_1\sim v_2\iff \phi(v_1)\sim \phi(v_2)$, 
    \item if $v_1=g_1v\notin B,v_2=g_2v\in B,g_1\in \la x,b^2\ra,g_2\in \la x,b^2\ra b$, then $g_1v\sim g_2v\iff g_1g_2^{-1}g_1v \sim g_1v \iff b^2g_2v \sim g_1v$, since $ g_1g_2^{-1}\in \Aut(\G)$ and  $g_1g_2^{-1}g_1=b^2g_2$,
    \item  if $v_1\in \ma {u}$ then $v_2\in \ma {v}\cup \ma{u}$ 
($\ma{v}$ and $\ma{u}$ are the only orbits, as $|V(\G)|\leq 4q+4$);    $\phi$ was constructed to preserve adjacency and non-adjacency between $v_1$ and $ v_2$,
    \item if $v_1\in B$, $2$ divides $|G_{v_2}|$
    then $b^2\in G_{v_2} $, hence $v_1\sim v_2\iff b^2v_1\sim v_2$.
\end{itemize}
We have reached a contradiction, since $\phi$ fixes $v$ but not $bv$ and $G_v=G_{bv}=\{1\}$.
\end{proof}
Using Lemma \ref{pq14} we recover the following result, which was first proven in \cite{grav}.
\begin{proposition}{\label{2t}}
The generalised quaternion group $Q_{2^{r+1}}$ satisfies $\alpha (Q_{2^{r+1}})=2^{r+2}.$
\end{proposition}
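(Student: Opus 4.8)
The plan is to establish the matching upper and lower bounds for $\alpha(Q_{2^{r+1}})$ separately. The upper bound $\alpha(Q_{2^{r+1}})\leq 2^{r+2}=2|Q_{2^{r+1}}|$ is immediate from Babai's Theorem \ref{30}, since $Q_{2^{r+1}}$ is not one of the three excluded cyclic groups; alternatively one may invoke Construction \ref{con0} directly. So the heart of the argument is the lower bound: no graph on at most $2^{r+2}-1$ vertices can have automorphism group isomorphic to $Q_{2^{r+1}}$.

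For the lower bound I would argue by contradiction. Suppose $\G$ is a graph with $\Aut(\G)\cong G:=Q_{2^{r+1}}$ and $|V(\G)|\leq 2^{r+2}-1$. Consider the natural (faithful) action of $G$ on $V(\G)$ and decompose $V(\G)$ into $G$-orbits. By the orbit-stabilizer lemma each orbit has size dividing $|G|=2^{r+1}$, so each orbit has size a power of $2$ that is at most $2^{r+1}$. The key point is that $G$ has a unique involution, namely $b^2$ in the presentation $\la y,b\mid y^{2^r}=1,\;y^{2^{r-1}}=b^2,\;byb^{-1}=y^{-1}\ra$ (this is the defining feature of generalised quaternion groups). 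Since the action is faithful, $b^2$ must move some vertex $w$; then by Cauchy's theorem $2\nmid |G_w|$, forcing $G_w=\{1\}$ and hence $|\ma w|=2^{r+1}$. Now apply Lemma \ref{pq14} with $q=1$: its hypothesis is precisely that every orbit has size at most $\max\{2^{r+1},2^r\cdot 1\}=2^{r+1}$, which holds here, so Lemma \ref{pq14} yields at least two orbits of size $2^{r+1}$. This gives $|V(\G)|\geq 2\cdot 2^{r+1}=2^{r+2}$, contradicting the assumption $|V(\G)|\leq 2^{r+2}-1$. Hence $\alpha(Q_{2^{r+1}})\geq 2^{r+2}$, and combined with Babai's bound we get equality.

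The main obstacle — already packaged into Lemma \ref{pq14}, which we are free to use — is showing that a single orbit of size $2^{r+1}$ cannot suffice, i.e. that one cannot "glue on" a configuration of small orbits so that $\Aut(\G)$ collapses down to exactly $G$. Morally, if $\ma w$ were the only $G_w=\{1\}$ orbit, then the swap $\phi$ that applies $b^2$ to the "half" $B=\{y^k b w\mid k\in\N\}$ of $\ma w$ and fixes everything else is forced to be a graph automorphism (because $b^2$ acts freely only on $\ma w$, and on every other vertex $b^2$ is in the stabilizer, so $b^2$ preserves adjacency to those vertices), yet $\phi\notin G$ since $G_w=G_{bw}=\la x\ra=\{1\}$ — contradicting $\Aut(\G)\cong G$. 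Everything in this step is the content of Lemma \ref{pq14}, so the proof of the proposition itself reduces to verifying the hypothesis of that lemma, which is the orbit-size bound forced by $|V(\G)|<2^{r+2}$ and the unique-involution property. Once that is checked the proposition follows in a few lines.
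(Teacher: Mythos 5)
Your proposal is correct and follows exactly the paper's argument: the upper bound comes from Babai's Theorem \ref{30} and the lower bound from Lemma \ref{pq14} applied with $q=1$, whose orbit-size hypothesis holds automatically since every orbit size divides $|Q_{2^{r+1}}|=2^{r+1}$. The paper's proof is just these two sentences, so your additional unpacking of the lemma's role is accurate but not a different route.
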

\begin{proof}
By Babai's Theorem \ref{30}, $\alpha (Q_{2^{r+1}})\leq 2^{r+2}$.
The inequality $\alpha (Q_{2^{r+1}})\geq 2^{r+2}$ follows from Lemma \ref{pq14}.
\end{proof}

\begin{proposition}{\label{p2066}}
The generalised dicyclic group $Q_{2^{r+1}}\times C_2$ satisfies $\alpha(Q_{2^{r+1}}\times C_2)= 2^{r+2}+2$.
\end{proposition}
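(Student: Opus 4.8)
The plan is to establish the two inequalities $\alpha(Q_{2^{r+1}}\times C_2)\le 2^{r+2}+2$ and $\alpha(Q_{2^{r+1}}\times C_2)\ge 2^{r+2}+2$ separately. For the upper bound, I would take a graphical regular representation $\G$ of $Q_{2^{r+1}}$ on $2^{r+1}$ vertices — which exists whenever $2^{r+1}\ge 8$, i.e. $r\ge 2$, by the GRR-Theorem — together with a graph $\G'$ on $2$ vertices realising $C_2$ (a single edge), and set $\G\cup\G'$; since $2^{r+1}>2$, the component of size $2^{r+1}$ is preserved by every automorphism, so $\Aut(\G\cup\G')\cong Q_{2^{r+1}}\times C_2$, giving $\alpha\le 2^{r+1}+2=\tfrac12(2^{r+2})+2$; this is even better than claimed, so actually the content of the proposition is the lower bound, and I should double-check the small case $r=1$, where $Q_4\times C_2$ is not meaningful ($Q_4$ is not a quaternion group), or $r=2$ where $Q_8$ has no GRR and the construction must instead invoke Construction~\ref{con0} as in Construction~\ref{4grr}, still yielding a component of size $16>2$.

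Wait — re-examining, the claim is $\alpha(Q_{2^{r+1}}\times C_2)=2^{r+2}+2$, i.e. $2|Q_{2^{r+1}}|+2$, not $|Q_{2^{r+1}}\times C_2|+2$. So the disjoint-union idea is wrong: a GRR of $Q_{2^{r+1}}$ does not exist in a way that helps, and more importantly the relevant comparison is with $2^{r+2}$. The correct reading of Theorem~\ref{203}$(ii)$ is that $\alpha(Q_{2^r}\times C_2)=|Q_{2^r}\times C_2|+2 = 2^{r+1}+2$; with the reindexing $Q_{2^{r+1}}\times C_2$ this is $2^{r+2}+2$, consistent. So for the upper bound I would still use $\G\cup\G'$ where $\G$ is a graph on $2^{r+2}$ vertices with $\Aut(\G)\cong Q_{2^{r+1}}$ coming from Babai's Construction~\ref{con0} (since $Q_{2^{r+1}}$ is a generalised dicyclic group and has no GRR), and $\G'$ the single edge; the size-$2^{r+2}$ component is $>2$, so it is setwise fixed, and $\Aut(\G\cup\G')\cong Q_{2^{r+1}}\times C_2$, giving $\alpha\le 2^{r+2}+2$.

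For the lower bound, which is the main obstacle, I would suppose $\G$ is a graph on at most $2^{r+2}+1$ vertices with $\Aut(\G)\cong Q_{2^{r+1}}\times C_2$ and derive a contradiction. Write $G=Q_{2^{r+1}}\times C_2 = \langle y,x,b\,\rangle$-style presentation with the central $C_2 = \langle x\rangle$ now of order $2$ (so $q=2$ in the notation of Lemma~\ref{pq14}, but $2$ is not odd — so Lemma~\ref{pq14} does not apply directly and I would need an analogous argument). The key structural fact is that $G$ has a unique subgroup of order $2$ inside the quaternion factor that is central, call its generator $z=b^2$; together with $x$ these generate a central $C_2\times C_2$. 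Faithfulness of the action forces some vertex $w$ with $z\notin G_w$; then by Cauchy/orbit-stabilizer the orbit $\mathcal O_w$ has size divisible by the order of $y$ — actually by $2^{r+1}$ — and one argues, as in Lemma~\ref{pq14}, that if all orbits are small there must be at least two orbits of size $2^{r+1}$ coming from $z$-non-fixed vertices. Then one runs a second copy of the argument with $x$ in place of $z$: some vertex $w'$ with $x\notin G_{w'}$, forcing more large orbits. Carefully accounting, the total number of vertices needed exceeds $2^{r+2}+1$, or else one constructs an explicit non-inner automorphism (the "swap $b^2$ on a coset $B$" map of Lemma~\ref{pq14}, adapted to the $C_2\times C_2$ center) that commutes with the $G$-action but is not in $G$, contradicting $\Aut(\G)\cong G$. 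The hard part will be the bookkeeping of which central involution ($z$, $x$, or $zx$) fixes which orbit, and showing that with at most $2^{r+2}+1$ vertices one cannot simultaneously kill all three would-be "extra" automorphisms; I expect this to split into a handful of cases according to the orbit sizes $1,2,2^r,2^{r+1},2^{r+2}$ that can occur, with the borderline configuration ($2^{r+2}$ from the free orbit plus two more vertices) being exactly the one realised by the upper-bound construction, and anything smaller admitting an exotic automorphism.

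Regarding ordering: I would first handle the upper bound in one short paragraph via Construction~\ref{con0} plus a disjoint edge, then devote the remainder to the lower bound, proving first a Lemma~\ref{pq14}-style claim that any faithful action of $G$ with all orbits of size $\le 2^{r+1}$ on $\le 2^{r+2}+1$ vertices yields a contradiction, then extending to allow one orbit of size $2^{r+2}$ by showing the remaining $\le 2$ vertices are then fixed pointwise (orbits of size $1$), which pins down the graph enough to exhibit the extra automorphism. The small cases $r=2$ (where $Q_8$ appears, with no GRR) should be noted to be covered by the construction since $|Q_8|=8>2$; I do not anticipate a separate exceptional case here, unlike in Table~\ref{table:kd}.
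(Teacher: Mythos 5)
Your upper bound is exactly the paper's: a graph on $2^{r+2}$ vertices from Babai's Construction~\ref{con0} applied to $Q_{2^{r+1}}$, disjoint union with a single edge; the self-correction you make partway through (it is $2|Q_{2^{r+1}}|+2$, not $|Q_{2^{r+1}}\times C_2|+2$, that is wanted) lands on the right construction. The lower bound is also aimed in the right direction --- the contradiction does come from an exotic automorphism that multiplies by $b^2$ on the ``$b$-half'' of the large orbits, as in Lemma~\ref{pq14} --- but your plan for organising it has a genuine gap. The auxiliary claim that running ``a second copy of the argument with $x$ in place of $z$'' forces more large orbits is false: $x$ is central of order $2$, so faithfulness does give a vertex $w'$ with $x\notin G_{w'}$, but $G_{w'}$ can be the entire $Q_{2^{r+1}}$ factor, in which case $|\ma{w'}|=2$. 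So the three-involution bookkeeping you propose does not produce the vertex count you need, and the paper never uses it.

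What the paper actually does is simpler and uses only $b^2$. Since every nontrivial subgroup of $Q_{2^{r+1}}$ contains $b^2$, any stabilizer avoiding $b^2$ is one of $\la 1\ra,\la x\ra,\la b^2x\ra$, so the corresponding orbit has size $2^{r+2}$ or $2^{r+1}$; the bound $|V(\G)|\le 2^{r+2}+1$ then forces all vertices not fixed by $b^2$ into at most two orbits $\ma w,\ma u$, with $G_w\neq G_u$ when $u\neq w$. The map $\phi$ equal to $b^2$ on $B=\{y^kx^lbz\mid z\in\{w,u\}\}$ and the identity elsewhere is then shown to be an automorphism, and it fixes $w$ but not $bw$ while $G_w=G_{bw}$, a contradiction. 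The step you defer to ``bookkeeping'' is precisely the nontrivial content: one must check that $\phi$ preserves adjacency between the $b$-half and the non-$b$-half of a single orbit (via $g_1g_2^{-1}g_1=b^2g_2$ for $g_1\in\la y,x\ra$, $g_2\in\la y,x\ra b$), and between a vertex with stabilizer $\la x\ra$ and one with stabilizer $\la b^2x\ra$ (via $(b^{2n}x)(b^{2(n+1)}x)=b^2$). Without these verifications, and with the flawed counting claim in their place, the lower bound is not established.
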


\begin{proof}Let $G=\la y,x,b\mid y^{2^r}=x^2=1,y^{2^{r-1}}=b^2,yx=xy,bx=xb,byb^{-1}=y^{-1}\ra.$

Let $\G$ be a graph on at most $2^{r+2}+1$ vertices with automorphism group isomorphic to $G$.
The faithfulness of the action of $G$ on $V(\G)$ implies the existence of some $w\in V(\G)$ such that $b^2 \notin G_w$. Then $G_w\in\{\la 1\ra,\la x\ra,\la b^2x\ra\}$. 
Let $u\in V(\G)$ be such that $G_u\in \{\la x\ra,\la b^2x\ra\}$ and $u\notin \ma w$; if no such vertex exists, let $u=w$. Since $|V(\G)|\leq 2^{r+2}+1$, there exist at most two orbits of size $2^{r+1}$ or one of size $2^{r+2}$. 
Therefore, if $u\neq w$ then $G_u\neq G_w$, as the action of $G$ on $V(\G)$ is faithful.

Let $B=\big\{y^kx^lbz\mid z\in {\{w,u\}}, k,l\in \mathbb{N}\big\}$ and let $\phi:V(\G)\rightarrow V(\G)$ be the map
\[ 
\phi(v)= {\left\{ 
\begin{array}{ll}
    b^2v,\;&\; \text{  if }v\in B,\\
      v, \;&\;\text{  if }v\notin
     B.
\end{array} 
\right.} \]
We will show that $\phi$ is an automorphism of $\G$ by proving that
$v_1\sim v_2\iff \phi(v_1)\sim \phi(v_2)$, for every $v_1,v_2\in V(\G)$. Indeed,
\begin{itemize}
    \item if $v_1,v_2\in B$, then $v_1\sim v_2\iff b^2v_1\sim b^2v_2$,
    since $ b^2\in \Aut(\G)$,
    \item if $v_1,v_2\notin B$, then clearly$\;v_1\sim v_2\iff \phi(v_1)\sim \phi(v_2)$,
 \item if $v_1=g_1z\notin  B, v_2=g_2z\in B, z\in \{w,u\}, g_1\in \la y,x\ra,g_2\in \la y,x\ra b$, then $g_1z\sim g_2z\iff g_1g_2^{-1}g_1z \sim g_1z \iff b^2g_2z \sim g_1z$, as $ g_1g_2^{-1}\in \Aut(\G), \;g_1g_2^{-1}g_1=b^2g_2$,
\item if $v_1\notin B,v_2\in B$ and $G_{v_1}=\la b^{2n}x\ra,G_{v_2}=\la b^{2(n+1)}x \ra$ for some $n\in\{1,2\}$, then we have
 $v_1\sim v_2 \iff v_1\sim (b^{2n}x) (b^{2(n+1)}x)v_2\iff v_1\sim b^2v_2$,
    \item if $v_1\notin \ma w\cup \ma u$, $v_2\in B$, then $v_1\sim v_2 \iff v_1\sim b^2v_2$, since $b^2\in G_{v_1}$.
\end{itemize}
The map $\phi$ fixes $w $ but not $ bw$ and $G_{w}=G_{bw}$; a contradiction. Thus, $\alpha(G)\geq 2^{r+2}+2$.

Let us now construct a graph $\G$ on $2^{r+2}+2$ vertices such that $\Aut(\G)\cong G$.
\begin{construction}
Let $\G_1$ be a graph on $2^{r+2}$ vertices constructed according to Babai's Construction \ref{con0} for the generalised quaternion group $ Q_{2^{r+1}}$ and $\G_2$ be the connected graph on 2 vertices. 
The graph $\G=\G_1\cup \G_2$ has $\Aut(\G)\cong G$, since it consists of two connected components,
$\G_1,\G_2$, of different size and
$\Aut(\G_1)\cong  Q_{2^{r+1}},\Aut(\G_2)\cong C_2$.
\end{construction}
\vspace{-9 mm}
\end{proof}

\begin{proposition}{\label{2c}}
The generalised dicyclic group $G_{16}=\la x,b\mid x^{4}=b^{4}=1,\;bxb^{-1}=x^{3}\ra$ satisfies $\alpha(G_{16})=18$. 
\end{proposition}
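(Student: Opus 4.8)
The plan is to establish the two inequalities $\alpha(G_{16})\le 18$ and $\alpha(G_{16})\ge 18$ separately. Throughout write $G=G_{16}$; as observed, $G=\mathrm{Dic}(C_4\times C_2,b^2)$, so $b^2$ is a central involution, $Z(G)=\langle x^2,b^2\rangle$, and, being generalised dicyclic, $G$ has no GRR.

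For the upper bound I would exhibit an explicit graph $\G$ on $18$ vertices with $\Aut(\G)\cong G$. A natural candidate is a Cayley graph $\te{Cay}(G,S)$ on the $16$ elements of $G$ — whose automorphism group properly contains $G$ since $G$ has no GRR — augmented by two further vertices $p\sim q$, with $p$ joined to one coset of $\la x\ra$ and $q$ to one coset of $b\la x\ra$, chosen so that $G$ still acts while the surplus automorphisms of $\te{Cay}(G,S)$ are destroyed; alternatively one gives a bespoke graph in the style of Constructions \ref{4grrrrr}--\ref{4gr}. Either way the identity $\Aut(\G)\cong G$ can be confirmed by a direct check (as elsewhere in this section, with the aid of \cite{gap}), and then $\alpha(G)\le 18$.

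For the lower bound, suppose for contradiction that $\G$ has at most $17$ vertices and $\Aut(\G)\cong G$, and consider the action of $G$ on $V(\G)$. First I would rule out a regular orbit: if $G$ had an orbit $O$ of size $16$, the at most one remaining vertex $v_0$ would be fixed by $G$, so its set of neighbours in $O$ would be a $G$-invariant subset of $O$, hence empty or all of $O$; in every non-degenerate case $v_0$ is then fixed by all of $\Aut(\G)$, which would therefore contain the automorphism group of the Cayley graph induced on $O$, a group strictly larger than $G$ since $G$ is generalised dicyclic — a contradiction. Hence every orbit has size at most $8$ and every stabiliser $G_v$ has order at least $2$. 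Since the action is faithful there is a vertex $w$ with $b^2\notin G_w$; no subgroup of order $8$ avoids $b^2$ (each index-$2$ subgroup contains $G^2=Z(G)$), so $G_w$ is one of $\la x^2\ra,\ \la x^2b^2\ra,\ \la x\ra,\ \la xb^2\ra$, all contained in the index-$2$ subgroup $E=\la x,b^2\ra$.

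Now, mimicking Lemma \ref{pq14} and Propositions \ref{2t} and \ref{p2066}, I would define $\phi\colon V(\G)\to V(\G)$ to be multiplication by $b^2$ on the set $B$ consisting of the $Eb$-parts of all orbits whose stabiliser avoids $b^2$, and the identity elsewhere, and then show $\phi$ is a graph automorphism: it agrees with the action of $b^2\in G$ on $B$; it is the identity on $V(\G)\setminus B$; across an orbit and its $B$-part it exploits the defining relation that every element of $Eb$ squares to $b^2$; and for a vertex $v$ with $b^2\in G_v$ joined to $B$ it uses $b^2v=v$. Since $\phi$ fixes $w$ but sends $bw$ to $b^3w\ne bw$ while $G_w=G_{bw}$ (conjugation by $b$ stabilises each of the four possible $G_w$), $\phi$ is a nontrivial automorphism lying outside $G=\Aut(\G)$, a contradiction; hence $\alpha(G)\ge 18$. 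The part I expect to require the most care — the main obstacle — is verifying that $\phi$ preserves adjacency between vertices lying in two \emph{different} orbits whose stabilisers both avoid $b^2$: when the two stabilisers are, say, $\la x^2\ra$ and $\la x\ra$, the product trick used in Proposition \ref{p2066} does not immediately produce $b^2$, so one must either argue separately about the edges joining such orbits or refine the definition of $B$ according to whether the relevant stabilisers have order $2$ or $4$.
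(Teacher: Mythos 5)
Your reduction to the two inequalities is right, and much of the set-up for the lower bound is sound: ruling out a regular orbit via the non-existence of a GRR, the observation that every subgroup of order $8$ contains $b^2$, and the resulting list $G_w\in\{\la x^2\ra,\la x^2b^2\ra,\la x\ra,\la xb^2\ra\}$ all check out, as does the identity $g_1g_2^{-1}g_1=b^2g_2$ for $g_1\in E$, $g_2\in Eb$ that makes the $b^2$-map work \emph{within} a single orbit. The final contradiction (a nontrivial automorphism fixing $w$ but moving $bw$ cannot lie in $G$ because the candidate stabilisers are normalised by $b$ and avoid $b^2$) is also correct.

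The genuine gap is exactly the one you flag and then leave unresolved: your $\phi$ is not an automorphism in general. For $v_1\notin B$, $v_2\in B$ lying in \emph{different} orbits whose stabilisers both avoid $b^2$, the argument needs $b^2\in G_{v_1}G_{v_2}$, and this fails for several configurations that cannot be excluded a priori: two orbits with the same stabiliser (e.g.\ both $\la x\ra$, or both $\la x^2\ra$), and the mixed cases $(\la x^2\ra,\la x\ra)$ and $(\la x^2\ra,\la xb^2\ra)$, where the product set is contained in $\la x\ra$ or $\la xb^2\ra$ and misses $b^2$. No simple refinement of $B$ repairs this — whichever halves you multiply by $b^2$, some cross-orbit pair still requires $b^2\in G_{v_1}G_{v_2}$. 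The paper's proof is structured precisely around this obstruction: it first proves a separate lemma (Lemma \ref{p0}) which, via a more delicate map that acts by $b^2$ on one orbit but by carefully chosen odd powers $b^{\pm 1},b^{(-1)^kl}$ on a second orbit (the choice depending on the adjacency pattern between the two orbits), shows that no stabiliser equals $\la x^2b^2\ra$ and that there exist \emph{two} orbits with stabilisers in $\{\la x\ra,\la xb^2\ra\}$. Only then, in a second stage, is the contradiction obtained — not from a $b^2$-type map at all, but from an inversion map $x^kw_i\mapsto x^{-k}w_i$ (normalised by a shift $\delta$) on the orbits not fixed by $x^2$, which fixes a vertex $v_1$ with $G_{v_1}\in\{\la x\ra,\la xb^2\ra\}$ and a vertex $w_1$ with $G_{w_1}\in\{\la b^2\ra,\la x^kb\ra\}$ simultaneously, contradicting $G_{v_1}\cap G_{w_1}=\{1\}$. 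So the missing ingredient is not a technicality but the second, structurally different automorphism; a single $b^2$-multiplication cannot close the argument. (For the upper bound you also stop short of an actual $18$-vertex graph; the paper gives an explicit construction verified by GAP, and some concrete graph must be produced for that half to stand.)
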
\vspace{-0.3 cm}
We will prove Proposition \ref{2c} using the following lemma.\vspace{-0.3 cm}
\begin{lemma}{\label{p0}}
Suppose that $\G$ is a graph on at most 17 vertices such that $\Aut(\G)\cong G$, where $G=G_{16}$, and consider the action of $G$ on $V(\G)$.
Then, there is no vertex with stabilizer equal to
$\la x^2b^2\ra$. Moreover, there are two orbits, $\ma {v_1
}, \ma {v_2}$,  such that $G_{v_1},G_{v_2}\in\big\{\la x\ra, \la b^2x\ra\big\}$.
\end{lemma}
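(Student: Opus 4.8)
The strategy is to study the orbits of $G=G_{16}$ on $V(\G)$ and to show that in every ``degenerate'' orbit configuration one can build a graph automorphism of $\G$ outside $G$, contradicting $\Aut(\G)\cong G$. Set $z=x^2b^2$. The facts about $G$ I would record first are: $Z(G)=\langle x^2,b^2\rangle\cong C_2\times C_2$, with $z$ the third involution; $[G,G]=\langle x^2\rangle$; and $G$ has exponent $4$ with no element squaring to $z$. From these one gets that every subgroup of order $\geq 8$ contains $Z(G)$; the only subgroups containing neither $x^2$ nor $b^2$ are $1$ and $\langle z\rangle$; and the only subgroups containing $x^2$ but neither $b^2$ nor $z$ are $\langle x^2\rangle,\langle x\rangle,\langle xb^2\rangle$. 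I would also use that $G/\langle z\rangle\cong Q_8$, $G/\langle x^2\rangle\cong C_2\times C_4$, $G/\langle b^2\rangle\cong D_8$ and $G/\langle x\rangle\cong G/\langle xb^2\rangle\cong C_4$, none of which admits a GRR. Finally, $\G$ has no orbit of size $16$: a regular orbit would display $\G$ (minus at most one uniformly attached fixed vertex) as a Cayley graph of $G$, and since $G$ is generalised dicyclic it has no GRR, so $\Aut(\G)$ would strictly contain $G$. Hence every orbit has size in $\{1,2,4,8\}$.

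For the first claim, suppose $G_v=\langle z\rangle$ for some $v$, so $|\mathcal{O}_v|=8$, $z$ fixes $\mathcal{O}_v$ pointwise (as $\langle z\rangle\trianglelefteq G$), and $G$ acts on $\mathcal{O}_v$ as a regular copy of $Q_8=G/\langle z\rangle$. By faithfulness $z$ moves a vertex outside $\mathcal{O}_v$; a second orbit with stabilizer $\langle z\rangle$ would force $|V(\G)|\geq 16$ with at most one further (fixed) vertex, making $z$ fix all of $V(\G)$ --- impossible. Hence every $q\notin\mathcal{O}_v$ has $G_q\notin\{1,\langle z\rangle\}$, so $G_q$ contains $x^2$ or $b^2$; since $\{x^2,b^2\}$ is the preimage in $G$ of the central involution of $Q_8$, the set $N(q)\cap\mathcal{O}_v$ is invariant under that involution, i.e.\ it is a union of the four cosets (``blocks'') of $\langle -1\rangle$ in $Q_8$. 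Now the key point: every inverse-closed subset of $Q_8$ is a union of conjugacy classes, so both the left and the right regular actions of $Q_8$ lie in $\Aut(\G[\mathcal{O}_v])$, and therefore so does $L_iR_i\colon q\mapsto iqi$; one checks $L_iR_i$ fixes each block setwise but is not a one-sided translation. Thus $L_iR_i$ preserves every $N(q)\cap\mathcal{O}_v$ and extends by the identity on $V(\G)\setminus\mathcal{O}_v$ to an automorphism of $\G$ that acts on $\mathcal{O}_v$ outside the image of $G$ --- a contradiction. So no vertex has stabilizer $\langle z\rangle$.

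For the second claim, faithfulness of $b^2$ forces an orbit with stabilizer in $\{\langle x^2\rangle,\langle x\rangle,\langle xb^2\rangle\}$. Assuming at most one orbit has stabilizer in $\{\langle x\rangle,\langle xb^2\rangle\}$ --- say $\langle x\rangle$, using the automorphism $x\mapsto xb^2,\ b\mapsto b$ of $G$ --- I would run an inversion argument: whenever $G$ acts on an orbit $\mathcal{O}$ through an abelian quotient ($C_2\times C_4$ via $\langle x^2\rangle$, or $C_4$ via $\langle x\rangle$), the inversion automorphism of that Cayley graph lies outside the regular image of $G$ and preserves $N(q)\cap\mathcal{O}$ whenever the image of $G_q$ contains the relevant order-$2$ ``square'' subgroup --- which, by the subgroup lattice above, fails only for $G_q\in\{\langle x^2\rangle,\langle x\rangle,\langle xb^2\rangle\}$. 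Combining this with faithfulness of $x^2$ and of $z$, with the first claim, and with $|V(\G)|\leq 17$, the orbit configuration is pinned down to a short list; in all but one case the inversion automorphism of the stabilizer-$\langle x^2\rangle$ orbit, or the inversion of the stabilizer-$\langle x\rangle$ orbit through $C_4$, or the localized swap by $x^2$ on the orbit that $x^2$ alone moves, extends to an automorphism of $\G$ outside $G$, giving the contradiction and hence the required two orbits.

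The main obstacle is the one surviving configuration, namely $V(\G)=\mathcal{O}(4,\langle x\rangle)\ \sqcup\ \mathcal{O}'(8,\langle x^2\rangle)\ \sqcup\ \mathcal{O}''(4,\langle x^ib\rangle)$ together with at most one fixed vertex. Here no translation- or inversion-type automorphism of a single induced subgraph survives simultaneously the attachments from $\mathcal{O}$ (invariant under the image of $\langle x\rangle$) and those from $\mathcal{O}''$ (invariant under the image of $\langle x^ib\rangle$): on $\G[\mathcal{O}']\cong\mathrm{Cay}(C_2\times C_4,S)$ these two demands together force the identity. Breaking this case requires either exhibiting an automorphism of $\G[\mathcal{O}']$ that genuinely uses the connection set $S$ (a sharper use of the absence of a GRR for $C_2\times C_4$), or directly checking the finitely many remaining graphs on at most $17$ vertices --- the latter being in the spirit of the GAP verifications the paper already relies on, as in Proposition \ref{3aa} and in Case 4 of the argument following Construction \ref{ip1}.
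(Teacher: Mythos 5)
Your argument for the first assertion (no stabilizer $\langle x^2b^2\rangle$) is correct and genuinely different from the paper's. You exploit that $\langle x^2b^2\rangle$ is central with quotient $Q_8$, that every inverse-closed subset of $Q_8$ is a union of conjugacy classes (so both regular representations act on the induced Cayley graph), and that every vertex outside the orbit has stabilizer meeting $\{x^2,b^2\}$, so its neighbourhood in the orbit is a union of $\langle -1\rangle$-cosets; the map $q\mapsto iqi$ preserves these blocks and extends by the identity to an automorphism outside the image of $G$. The paper instead folds this case into a single ``multiply half the orbit by $b^2$'' map, but your version of this sub-claim is self-contained and sound.

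The second assertion is where the proposal breaks down, and you say so yourself. Your inversion-type automorphisms of the abelian quotient Cayley graphs do not survive the cross-edge constraints in the configuration $\mathcal{O}(4,\langle x\rangle)\sqcup\mathcal{O}'(8,\langle x^2\rangle)\sqcup\mathcal{O}''(4,\langle x^ib\rangle)$ together with at most one fixed vertex, and you leave that case to an unspecified ``sharper use of the absence of a GRR for $C_2\times C_4$'' or to a computer search, neither of which is carried out. This is exactly the case the paper's construction is designed to handle: one takes $v_1$ in the largest orbit among those moved by $b^2$ (here the size-$8$ orbit with stabilizer $\langle x^2\rangle$), sets $B=\{x^kb^lv_1\mid l\in\{1,3\}\}$, and defines $\phi$ to be multiplication by $b^2$ on $B$, a suitable power of $b$ on the size-$4$ orbit $\mathcal{O}_{v_2}$ with stabilizer $\langle x\rangle$ (after normalizing, by passing to the complement if necessary, how $v_1$ attaches to $\mathcal{O}_{v_2}$), and the identity elsewhere. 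The cross-edges to the $b^2$-fixed vertices cause no trouble because $b^2$ lies in their stabilizers, and the edges between $\mathcal{O}_{v_1}$ and $\mathcal{O}_{v_2}$ are controlled by the normalization; since $\phi$ fixes $v_1$ but not $bv_1$ while $G_{v_1}=G_{bv_1}$, one gets the contradiction. As written, your proof does not establish the existence of the two orbits with stabilizers in $\{\langle x\rangle,\langle b^2x\rangle\}$; you would need either to import this half-orbit twist for the surviving configuration or to actually perform and document the finite verification you allude to.
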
\vspace{-0.4 cm}

\begin{proof}
The action of $G$ on $V(\G)$ is faithful; thus, there exists some vertex $v_1\in V(\G)$ such that $b^2\notin G_{v_1}$. Let $v_1,v_2,...,v_s\in V(\G)$ form a maximal set of vertices such that
$b^2\notin G_{v_i},$ for $i\in\{1,...,s\}$, and
 the orbits $\ma {v_1},...,\ma {v_s}$ are distinct. Since $ |\ma {v_i} |\geq 4$, for  $i\in\{1,...,s\}$, the assumption $ |V(\G)|\leq 17$ implies that
$s\leq 4$.

Suppose
that 
there do not exist distinct $i,j\in\{1,...,s\}$ such that $G_{v_i},G_{v_j}\in \big\{\la x\ra, \la b^2x\ra\big\}$ or there exists $i\in\{1,...,s\}$ such that $G_ {v_i}=\la x^2b^2\ra$. 
Since $|V(\G)|\leq 17$ and the action of $G$ on $V(\G)$ is faithful, if $s \geq 3$ then there exists $i\in\{1,...,s\}$ such that $x^2\notin G_{v_i}$; hence $G_{v_i}=\la x^2b^2\ra$. Moreover, since the action is faithful, there is at most one $i\in\{1,...,s\}$ such that  $G_{v_i}=\la x^2b^2\ra$. To sum up, we have $s\leq 2$ or $G_{v_i}=\la x^2b^2\ra$ for a unique $i\in\{1,...,s\}$.
If the latter is true, without loss of generality let $v_1$ have stabilizer $G_{v_1}=\la x^2b^2 \ra$; alternatively, let $v_1$ be such that
 $|\ma {v_1}|\geq |\ma {v_i}|$, for $ i\in\{1,s\}$. Moreover, 
if $s=2$, without loss of generality we assume that if $v_1$ is connected to $\ma {v_2}$ then $v_1\sim v_2$. Finally, by possibly replacing $\G$ with its complement, let $v_1$ be adjacent to at most half the vertices of $\ma {v_2}$.

Let
$B=\big\{x^kb^lv_1\mid k\in \mathbb{N},l\in\{1,3\}\big\}$.
We will show that the map $\phi:V(\G)\rightarrow V(\G)$, 
\vspace{-1 mm}\[ 
\phi(v)= {\left\{ 
\begin{array}{ll}
    b^2v, \;&\;\text{  if }  v\in B,
  \\v, \;&\;\text{  if }   v\in\ma {v_2} \text{  and }  G_{v_1}=\la x^2b^2\ra,
      \\b^{(-1)^k l}v, \;&\;\text{  if }  v=b^kv_2,
       v_1 \sim b^lv_2,
       \te{ where}\;k\in\mathbb{N}, l\in\{1,3\}, \te{ and }G_{v_1}\neq\la x^2b^2\ra,
          \\b^{-k}v, \;&\;\text{  if }  v=b^kv_2,
      v_1 \nsim b^lv_2,  \forall\;l\in\{1,3\},
      \te{ where}\;k\in\mathbb{N},
      \te{ and }G_{v_1}\neq\la x^2b^2\ra,
      \\v, \;&\;\text{  if }  v \notin (B\cup \ma {v_2}),
      
\end{array} 
\right. }\]
is an automorphism of $\G$. 
Indeed, $u_1\sim u_2\iff \phi(u_1)\sim \phi(u_2)$, for all $u_1,u_2\in V(\G)$, as
\begin{itemize}
    \item if $u_1,u_2\in B$ then $u_1\sim u_2\iff b^2u_1\sim b^2u_2$, since $b^2\in \Aut(\G)$,
    \item if $u_1,u_2$ are fixed by $\phi$ then clearly $u_1\sim u_2\iff \phi(u_1)\sim \phi(u_2)$, 
    \item if $u_1=g_1v_1\notin B, u_2=g_2v_1\in B, g_1\in \la x,b^2\ra,g_2\in \la x, b^2\ra b$, then $g_1v_1\sim g_2v_1\iff g_1g_2^{-1}g_1v_1 \sim g_1v_1 \iff b^2g_2v_1 \sim g_1v_1$, since $g_1g_2^{-1}\in \Aut(\G)$ and  $g_1g_2^{-1}g_1=b^2g_2$,
    \item  if $u_1\in \ma {v_2},u_2\in \ma {v_1}\cup \ma{v_2}$ and $G_{v_1}\neq \la x^2b^2\ra$, 
    then $\phi$ was constructed to preserve adjacency and non-adjacency between $u_1$, $u_2$,
    \item  if $u_1\in B,u_2\in \ma {v_2}$ and $G_{v_1}=\la x^2b^2\ra$, then 
    $u_1\sim u_2\iff x^2(x^2b^2)u_1\sim u_2\iff b^2u_1\sim u_2$
    since $x^2b^2\in G_{v_1},x^2\in G_{v_2}$,

    \item if $\phi(u_1)=b^2u_1$ and $ {u_2}\notin \ma {v_i}$ for all $i\in\{1,...,s\}$, then  $u_1\sim u_2\iff b^2u_1\sim u_2$, as $b^2\in G_{u_2}$,
  \item
  if $\phi(u_1)=b^lu_1$ for some $l\in\{1,3\}$, and $u_2\notin \ma {v_i}$ for all $i\in\{1,...,s\}$, then, by assumption, $G_{v_1}=\la x^2\ra$ and $G_{v_2}=G_{u_1}\in \{\la x\ra, \la b^2x\ra\}$. By faithfulness, there is $w\in V(\G)$, $x^2\notin G_{w}.$ Since $|V(\G)|\leq 17$, $G_{w}= \la x^kb\ra,$ for some $k\in \mathbb{N}$. In any case,  $G_{u_1}\in \{\la x\ra, \la b^2x\ra\}$, $G_{u_2}\in \{\la x^kb\ra \mid k\in \mathbb{N}\}\cup G$ imply that $u_1\sim u_2 \iff b^lu_1\sim u_2$.
\end{itemize}
We have reached a contradiction since
$G_{v_1}=G_{bv_1}$ and $\phi$ fixes $v_1$ but not $bv_1$.
\end{proof}\vspace{-3 mm}
\begin{proof}[Proof of Proposition \ref{2c}]

Let $G={G_{16}}$. Suppose that $\G$ is a graph on at most 17 vertices such that $\Aut(\G)\cong G$.
As the action of $G$ on $V(\G)$ is faithful, there exists $w_1\in V(\G)$ such that $x^2\notin G_{w_1}$; by Lemma \ref{p0}, $G_{w_1}\neq \la1\ra,\la x^2b^2\ra$, hence $G_{w_1}= \la b^2\ra$ or $G_{w_1}= \la x^kb\ra$ for some $k\in \mathbb{N}$. Let $\ma {v_1},\ma {v_2}$ be two distinct orbits such that $G_{v_1},G_{v_2}\in \big\{\la x\ra, \la b^2x\ra\big\}$, which exist by Lemma \ref{p0}. Since $|V(\G)|\leq 17$, there are up to two orbits, of total size at most eight, containing vertices that are not fixed by $x^2$.
If there are exactly eight such vertices, let
$w_2\in V(\G)$ be such that $x^2\notin G_{w_2}$ and
the vertices $\{w_1,xw_1,x^2w_1,x^3w_1,w_2,xw_2,x^2w_2,x^3w_2\}$ are distinct; if only four vertices of $\G$ are not fixed by $x^2$, let $w_2=w_1$. By possibly replacing $\G$ with its complement, we assume that the vertex $w_1$ is adjacent to up to two vertices of the set $\{w_2,xw_2,x^2w_2,x^3w_2\}$. Without loss of generality, if $w_1\neq w_2$, let these vertices be $w_2$ and $x^{\delta}w_2$, $\delta\in\{0,1,2,3\}$;
if $w_1$ is adjacent to exactly one vertex of the set $\{x^kw_2\mid k\in \mathbb{N}\}$ or $w_1=w_2$, let $\delta=0$.
Then \vspace{-2 mm}\begin{equation}\label{9e}w_1\sim x^nw_{2}\iff w_1\sim x^{\delta-n}w_{2},\;\;\forall\;n\in\mathbb{N}.\end{equation}\vspace{-2 mm}
We will show that the map $\psi:V(\G)\rightarrow V(\G)$, where
\vspace{-1 mm}\[ 
\psi(v)= {\left\{ 
\begin{array}{ll}
    x^{-k}w_1,\;&\;\text{     if }v=x^kw_1, \;k\in \mathbb{N},\\          x^{\delta-l}w_{2},\;&\;\text{     if }v=x^lw_{2}, \;\;l\in \mathbb{N},\\
          v,\;&\;\text{     if }x^2\in G_v,
\end{array} 
\right.} \]\vspace{-1 mm}
is an automorphism of $\G$. Indeed, $u_1\sim u_2\iff \psi(u_1)\sim \psi(u_2)$ for all $u_1,u_2\in V(\G)$, as
\begin{itemize}
    \item if $u_1=x^kw_i, u_2=x^lw_i, k,l\in \N, {i\in \{1,2\}}$, then
    $x^{k}w_i\sim x^{l}w_i\iff x^{j-l}w_i\sim x^{j-k}w_i,$ for $j=0,\delta,$ since $x^{j-k-l}\in \Aut(\G)$,
    \item if $u_1,u_2$ are fixed by $\psi$ then clearly $u_1\sim u_2 \iff\psi(u_1)\sim \psi(u_2)$,
    \item if $u_1=x^kw_1, u_2=x^lw_2$ for $k,l\in \N$ and $w_1\neq w_2$ then,
    by (\ref{9e}),
    $x^kw_1\sim x^{l}w_2\iff w_1\sim x^{l-k}w_2\iff  w_1\sim x^{\delta-l+k}w_2\iff x^{-k}w_1\sim x^{\delta-l}w_2$,
    \item if $u_1\in \ma {v_1}\cup\ma {v_2}$ or $|\ma {u_1}|=1$, and ${u_2}\in \ma {w_1}\cup \ma {w_2},$ then $u_1\sim u_2 \iff u_1\sim x^ku_2,$ for all  $k\in \mathbb{N}$, since $G_{u_1}\in\{\la x\ra,\la b^2x\ra,G\}$ and $ b^2\in G_{u_2}$,
    \item if $|\ma {u_1}|=2$ and ${u_2}=x^k w_1, k\in \{1,3\}$, then $x^2\in G_{u_1}$ hence $u_1\sim x^kw_1 \iff u_1\sim x^{k+2}w_1\iff \psi(u_1)\sim \psi(u_2)$; note that $w_1=w_2$, since $|V(\G)|\leq 17$.
\end{itemize}
We have reached a contradiction:
$\psi$ fixes $v_1$ and $w_1$ but $G_{v_1}\cap G_{w_1}=\{1\}$. Thus $\alpha(G)\geq 18$.

We will complete the proof by constructing a graph $\G$ on 18 vertices having $\Aut(\G)\cong G$. \vspace{-8 mm}
\begin{construction}{\label{con200}}\vspace{-1 mm}
Let $\G$ be a graph with vertex set $V(\G)=\ma 1\cup 
\ma {1'}\cup \ma {1''} \cup \ma {1'''}$, where
$\ma 1=\{1,2,...,8\},\;\ma {1'}=\{1',2'\},\;\ma {1''}=\{1'',2'',3'',4''\}$ and $\ma {1'''}=\{1''',2''',3''',4'''\}$.
We define the edge set of $\G$ to be such that,
for $v,w$ $\in \ma 1$, 
\vspace{ -2 mm}\begin{align*}
v'\nsim w';  &\;v'\nsim w''; \; v'\nsim w''';\;v'''\nsim w''';\\
v\sim w' &\iff v-w\equiv 0\;\te{(mod }2);
\\
v''\sim w''' &\iff  w-v\equiv 0,1\;\te{(mod }4);\\
v''\sim w'' &\iff v-w\equiv 2\;\te{(mod }4);
\\
v\sim w'' &\iff  (v,w)\in\bigcup_{k\in\{0,1\}}\big(\{4k+i\mid 1\leq i\leq 4\}\times\{k+1,k+3\}\big);
\\
v\sim w'''&\iff v\sim w'';
\\
v\sim w&\iff w-v\equiv0,(-1)^k\;\te{(mod }4), \qquad v\neq w,\qquad \te{and} \\&k\in\{0,1\} \te{ is such that }v\in \{4k+i\mid 1\leq i\leq 4\},w\notin \{4k+i\mid 1\leq i\leq 4\}.
\end{align*}
\end{construction}\vspace{-2 mm}
Using the mathematical software
GAP \cite{gap} we verified that $\Aut(\G)\cong G$.
\end{proof}\vspace{-2 mm}

\begin{proposition}{\label{k3}}
For the groups $\Dic_3,\; \Dic_5,\; \Dic_6,\;  Q_8\times C_3$ we have that $\alpha(\Dic_3)=17$, $\alpha(\Dic_5)=23$, and $\alpha(\Dic_6)=\alpha(Q_8\times C_3)=25$.
\end{proposition}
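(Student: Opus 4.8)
The plan is to prove, for each $G\in\{\Dic_3,\Dic_5,\Dic_6,Q_8\times C_3\}$, a lower bound $\alpha(G)\ge\alpha_0$ together with a graph on $\alpha_0$ vertices realising $G$, where $\alpha_0=17,23,25,25$ respectively. The lower bounds will follow the template of Lemmas \ref{pq14}, \ref{ika2} and \ref{p0}: assume a graph $\G$ on at most $\alpha_0-1$ vertices has $\Aut(\G)\cong G$, use orbit-stabilizer together with these lemmas (and, for the two groups of order $24$, the GRR-Theorem) to restrict the possible multisets of orbit sizes, and then, in each surviving configuration, construct a graph automorphism of $\G$ that does not lie in $G$ --- a ``$b^2$-flip'' that multiplies a suitably chosen $b$-coset inside one or two orbits by the central involution $b^2$. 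The upper bounds are explicit constructions.

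For $G=\Dic_q$ with $q\in\{3,5\}$, suppose $\Aut(\G)\cong G$ and $|V(\G)|\le\alpha_0-1$; note $\alpha_0-1\le 4q+4$, so Lemma \ref{ika2} applies and there is no orbit of size $4q=|G|$. Since every orbit size divides $|G|=2^2q$, all orbits then have size at most $\max\{4,2q\}$, and Lemma \ref{pq14} supplies at least two orbits of size $4$, each with stabiliser the unique Sylow $q$-subgroup. Faithfulness forces a further orbit whose stabiliser misses this subgroup, hence of size $2q$ (stabiliser the center, of order $2$) or of size $q$ (stabiliser a cyclic subgroup of order $4$). What remains is a finite case analysis over the admissible orbit-size multisets (bounded by $\alpha_0-1$ and forced to be faithful); in each I would exhibit a $b^2$-flip automorphism, contradicting $\Aut(\G)\cong G$. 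Thus $\alpha(\Dic_3)\ge 17$ and $\alpha(\Dic_5)\ge 23$.

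For $\Dic_6$ and $Q_8\times C_3$ (order $24$) the key extra input is the GRR-Theorem (Theorem \ref{23b}): neither group admits a GRR, so a $24$-vertex graph with automorphism group $G$ would be a Cayley graph with automorphism group strictly larger than $G$; hence in a graph $\G$ on at most $24$ vertices with $\Aut(\G)\cong G$ every orbit has size at most $12$. For $\Dic_6$, Lemma \ref{pq14} then gives at least two orbits of size $8$, leaving at most $8$ further vertices; for $Q_8\times C_3$, the fact that its unique involution lies in every subgroup of even order forces, by faithfulness and the absence of a $24$-orbit, an orbit of size $8$ with stabiliser the Sylow $3$-subgroup. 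In each case the proof concludes with a case analysis on the remaining orbits and a $b^2$-flip, giving $\alpha(\Dic_6)\ge 25$ and $\alpha(Q_8\times C_3)\ge 25$. For the matching upper bounds, $Q_8\times C_3$ is immediate: take the connected $16$-vertex graph $\G_1$ with $\Aut(\G_1)\cong Q_8$ from Babai's Construction \ref{con0} and any $9$-vertex graph $\G_2$ with $\Aut(\G_2)\cong C_3$ (which exists since $\alpha(C_3)=9$ by Proposition \ref{27b}); every component of $\G_2$ has fewer than $16$ vertices, so $\Aut(\G_1\cup\G_2)\cong Q_8\times C_3$ and $\alpha(Q_8\times C_3)\le 25$. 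For $\Dic_3,\Dic_5,\Dic_6$ I would give explicit graphs on $17,23,25$ vertices --- naturally obtained as refinements of Construction \ref{ip1} for the exceptional parameter pairs $(k,r)=(3,1),(5,1),(3,2)$, where the generic size estimate in that construction just fails --- with adjacencies arranged so that the valency arguments still pin down each part of the vertex set; correctness can be confirmed with GAP, as for Construction \ref{con200}.

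The step I expect to be hardest is the lower-bound case analysis: even after Lemmas \ref{ika2}, \ref{pq14} and the GRR-Theorem have trimmed the orbit structure, a moderate list of residual configurations remains, and for each one the $b^2$-flip has to be checked to preserve adjacency across every pair of orbits --- precisely the bookkeeping that makes the proofs of Lemmas \ref{pq14}, \ref{ika2}, \ref{p0} and Proposition \ref{p2066} long. A secondary difficulty is choosing the three explicit dicyclic graphs so that their automorphism groups do not accidentally exceed $G$.
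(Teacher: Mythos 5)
Your overall architecture --- lower bounds by forbidding orbit structures via Lemmas \ref{pq14} and \ref{ika2} and the GRR-Theorem, plus explicit constructions for the upper bounds (your $Q_8\times C_3$ graph is exactly the paper's) --- matches the paper. But there is a genuine gap in the lower bounds: the orbit census you extract is far too weak, and the one tool you name for finishing, the ``$b^2$-flip'', provably cannot close it. For $\Dic_q$ your analysis forces only two orbits of size $4$ plus one further orbit of size $q$ or $2q$, i.e.\ at least $8+q$ vertices: $11$ for $\Dic_3$ and $13$ for $\Dic_5$, against the targets $17$ and $23$ (similarly you reach roughly $19$ of the needed $25$ for the order-$24$ groups). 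The residual case analysis is therefore large, and --- crucially --- every orbit other than those of size $2^{r+1}$ or $|G|$ has stabilizer of even order, hence contains the unique involution $b^2$ and is fixed \emph{pointwise} by it: orbits of size $q$ have cyclic stabilizer of order $4\ni b^2$, orbits of size $2q$ have stabilizer $\langle b^2\rangle$, and orbits of size $1,2$ likewise. So a $b^2$-flip is the identity on precisely the orbits where the obstruction lives, and cannot produce the extra automorphism needed in configurations such as orbit sizes $\{4,4,3\}$ for $\Dic_3$ on $11\le 16$ vertices.

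The paper's missing ingredient is a different automorphism: an inversion $\psi$ sending $x^k v\mapsto x^{-k}v$ and $x^l u\mapsto x^{\delta-l}u$ on the (at most $2q$) vertices moved by the Sylow $q$-element $x$, and fixing everything else. Its existence shows that \emph{fewer than $3q$ vertices moved by $x$} is impossible; combined with the two orbits of size $2^{r+1}$ from Lemma \ref{pq14} (whose stabilizer is $\langle x\rangle$, so these $2^{r+2}$ vertices are fixed by $x$ and disjoint from the previous count), this yields $|V(\G)|\ge 3q+2^{r+2}$ in one stroke, with no residual case analysis; the same device handles $Q_8\times C_3$. You would need to add this inversion argument, or some other automorphism acting nontrivially on the $b^2$-fixed orbits, for your lower bounds to reach $17$, $23$ and $25$; as written the plan does not get there. (The upper bounds for $\Dic_3,\Dic_5,\Dic_6$ are also only sketched --- the paper gives explicit GAP-verified graphs, not refinements of Construction \ref{ip1}, whose size estimates indeed fail for these parameters --- but that is a matter of carrying out the construction rather than a conceptual gap.)
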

\begin{proof}
Let $G=\la y,x,b\ra$ be a generating set for $G$ such that $y^{2^r}=x^q=1, y^{2^{r-1}}=b^2,yx=xy$, where $r\in\{1,2\},q\in\{3,5\}$.

Suppose that there exists a graph $\G$ such that
$\Aut(\G)\cong G$
and $|V(\G)|< 3q+2^{r+2}$. We will study the action of the group $G$ on the vertex set of $\G$, in order to reach a contradiction.
 Let us start by showing that at least $3q$ vertices of $\G$ are not fixed by $x$.

Suppose, in contrast, that less than $3q$ vertices are not fixed by $x$.
The action of $G$ on $V(\G)$ is faithful, hence there exists some vertex $v\in V(\G)$ such that $x\notin G_v$.
Then $q$ does not divide $|G_v|$, thus $q$ divides $|\mathcal{O}_v|$. Hence there are at most two orbits of size $q$, or one of size $2q$. 
Let $V=\big\{x^kz\mid  z\in \{v,u\}, k\in \mathbb{N}\big\}$ be the set of vertices of $\G$ that are not fixed by $x$, where $u\notin\{ x^kv\mid k\in \mathbb{N}\}$, if there exist
two orbits of size $q$ or one of size $2q$,
and $u=v$, if there is exactly one orbit of size $q$.

By possibly replacing $\G$ with its complement, we assume that $v$ is adjacent to up to two vertices of the set $\{x^ku\mid k\in \mathbb{N}\}.$ Without loss of generality, if $v\neq u$, we consider these vertices to be $u$ and $x^\delta u,$ where $\delta\in \{0,1,...,q-1\}$; if $v$ is adjacent to exactly one vertex of the set $\{x^ku\mid k\in \mathbb{N}\}$ or $v=u$, let $\delta=0$. In any case, we have \begin{equation}\label{de3}v\sim x^nu\iff v\sim x^{\delta-n}u,\;\;\forall\;n\in\mathbb{N}.\end{equation}
We will show that the map $\psi:V(\G)\rightarrow V(\G)$, where \[\psi(z)=  {\left\{
\begin{array}{ll}
          x^{-k}v,\;&\;\text{     if }z=x^kv, \;k\in \mathbb{N},\\
          x^{\delta-l}u,\;&\;\text{     if }z=x^lu, \;l\in  \mathbb{N},\\
          z,\;&\;\text{     if }q\text{     divides }| G_z|,\\
\end{array} 
\right. }
\]
is an automorphism of $\G$ by proving that $v_1\sim v_2\iff \psi(v_1)\sim \psi(v_2)$, for every $v_1,v_2\in V(\G)$. Indeed,
\begin{itemize}
    \item if $v_1=x^kz,v_2=x^lz, z\in\{v,u\},k,l\in \N,$ 
    then $x^kz\sim x^lz\iff x^{j-k-l}x^{k}z\sim x^{j-k-l}x^lz\iff x^{j-l}z\sim x^{j-k}z$, for $j=0,\delta$,
    \item if $v_1,v_2$ are fixed by $\psi$ then clearly $v_1\sim v_2 \iff\psi(v_1)\sim \psi(v_2)$,
    \item if $v_1=x^kv,v_2=x^lu,k,l\in \N$, then,
    by (\ref{de3}),
    $x^kv\sim x^{l}u\iff v\sim x^{l-k}u\iff v\sim x^{\delta-l+k}u\iff x^{-k}v\sim x^{\delta-l}u$,
     \item if $v_1\notin \ma v\cup \ma u$, $v_2\in \ma v\cup \ma u$ then $v_1\sim v_2 \iff v_1\sim x^kv_2,\forall k\in \mathbb{N}$, as
     $\la x\ra \leq G_{v_1}$.
\end{itemize}
As the action of $G$ on $V(\G)$ is faithful, there exists some
$w\in V(\G)$ such that $b^2\notin G_w$, hence
$G_w=\la x \ra$ ($G_w\neq \{1\}$, since there exist less than $3q$ vertices that are not fixed by $x$). Then, $\psi\in G_w\cap G_{v}=\{1\}$ and $\psi(xv)=x^{-1}v$; a contradiction.

Thus, at least $3q$ vertices of $\G$ are not fixed by $x$. We will show that there is no orbit of size $|G|$. Indeed, if $|G|=24$ then we assumed that $|V(\G)|\leq 24$ hence, by the GRR theorem, there exists no orbit of size 24. If $|G|=12,20$ then, by
Lemma \ref{ika2}, no orbit has size equal to $|G|$. Therefore, every orbit has size at most $2^rq$.
By Lemma \ref{pq14}, there exist at least two orbits of size $2^{r+1}$ (the statement of  Lemma \ref{pq14} also holds for the group $G=Q_8\times C_3$ and the proof is analogous). Considering the number of vertices that are fixed or not fixed by $x$ we conclude that $|V(\G)|\geq 3q+2^{r+2}$; a contradiction. Hence, $\alpha(G)\geq 3q+2^{r+2}$.

Let us now consider each case for $G\in\big\{\Dic_3,\Dic_5,\Dic_6,Q_8\times C_3\big\}$ and
 construct a graph $\G$ such that $\Aut(\G)\cong G$,
completing the proof that 
$\alpha(G)= 3q+2^{r+2}$.
\begin{construction}
Assume that $G=\Dic_q$ for some $q\in\{3,5\}.$
Let $\G$ be a graph with vertex set $V(\G)=\ma 1  \cup 
\ma {1'} \cup \ma {1''}\cup \ma {1'''}$, where
$\ma 1=\{1,2,...,2q\},\;\ma {1'}=\{1',2',...,q'\},\;\ma {1''}=\{1'',2'',3'',4''\},\;\ma {1'''}=\{1''',2''',3''',4'''\}$,
and edge set such that,
given $v,w\in \ma 1$,
\vspace{ -2 mm}\begin{align*}
v'\nsim w'; &\; v'''\nsim w''';\; v'\nsim w'';\; v'\nsim w''';  \\
v\sim w &\iff w-v\equiv 0\;\te{(mod }q),\qquad v\neq w;
\\
v''\sim w''&\iff w-v\equiv 1\;\te{(mod }4) ;
\\
v\sim w' &\iff w-v\equiv 0\;\te{(mod }q), v\leq q,\qquad \te{or }w-v\equiv 1\;\te{(mod }q), v>q ;
    \\
v\sim w''&\iff w-v\equiv 0\;\te{(mod }2), v>q, \qquad\te{or }w-v\equiv 1\;\te{(mod }2) ,v\leq q ;
\\v\sim w'''&\iff v\sim w'';
\\
v''\sim w'''&\iff w-v\equiv 0,1\;\te{(mod }4) .
\end{align*}
\end{construction}\vspace{ -2 mm}
\begin{construction}{\label{con300}}
For $G=\Dic_6$, we let $\G$ be the graph with vertex set $V(\G)=\ma 1  \cup 
\ma {1'} \cup \ma {1''}\cup \ma {1'''}$, where
$\ma 1=\{1,2,...,8\},\;\ma {1'}=\{1',2',...,8'\},\;\ma {1''}=\{1'',2'',...,6''\},\;\ma {1'''}=\{1''',2''',3'''\}$,
and edge set such that,
for $v,w\in \ma 1$,
\vspace{ -2 mm}\begin{align*}v\nsim w; &\; v'''\nsim w'''; 
\;v\nsim w''';\; v'\nsim w'';\;v'\nsim w'''; \\
v'\sim w' &\iff w-v\equiv 4\;\te{(mod }8);
\\
v''\sim w'' &\iff w-v\equiv 1\;\te{(mod }3), v\leq 3,w>3,\qquad \te{or }w-v\equiv 2\;\te{(mod }3), v>3, w\leq 3 ;
\\
v\sim w'&\iff w-v\equiv 0\;\te{(mod }8), \qquad\te{or }w-v\equiv 3\;\te{(mod }4) \te{ and } (v\leq 4 \iff w\leq 4) ;
\\
v\sim w''&\iff v\leq 4,w\leq 3, \qquad\te{or }v> 4,w> 3;
\\
v''\sim w'''&\iff w-v\equiv 0\;\te{(mod }3) .
\end{align*}
\end{construction}
\vspace{ -2 mm}\begin{construction}{\label{con300}}
Finally, for $G=Q_8\times C_3$, let $\G_1$ be a graph on $16$ vertices constructed according to Babai's Construction \ref{con0} for the group $ Q_{8}$ and let $\G_2$ be 
a graph on 9 vertices such that $\Aut(\G_2)\cong C_3$, which exists by Proposition \ref{27b}. We let 
$\G=\G_1\cup \G_2$.
\end{construction}\vspace{ -2 mm}
Using GAP \cite{gap}, we confirmed that each graph has the desired automorphism group.
\end{proof}
\vspace{ -2 mm}

\begin{proposition}\label{p5s}
Let  $G=\la a,b\mid a^{8}=b^{2}=1,\;bab^{-1}=a^{5}\ra$. Then $\alpha(G)=18$.
\end{proposition}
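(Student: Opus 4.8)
The statement asks for both $\alpha(G)\le 18$ and $\alpha(G)\ge 18$, where $G=\la a,b\mid a^8=b^2=1,\ bab^{-1}=a^5\ra$ is the modular group ${G}'_{16}$ of Table \ref{table:kd}. For the upper bound I would proceed exactly as in Propositions \ref{2c} and \ref{k3}: write down an explicit graph $\G$ on $18$ vertices whose vertex set is a union of the $G$-orbits suggested by the stabilizer analysis below (one orbit of size $8$ on which $\la a\ra$ acts regularly, together with a few orbits of size at most $4$ used to rigidify the automorphisms), and verify $\Aut(\G)\cong G$ with GAP \cite{gap}.

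The substance is the lower bound. Suppose $\G$ is a graph on at most $17$ vertices with $\Aut(\G)\cong G$, and consider the faithful action of $G$ on $V(\G)$. First I would record the needed group theory: $Z(G)=\la a^2\ra$; the involutions of $G$ are exactly $a^4$, $b$, $a^4b$; the only subgroups of $G$ not containing $a^4$ are $\{1\}$, $\la b\ra$ and $\la a^4b\ra$; conjugation by $a$ interchanges $\la b\ra$ and $\la a^4b\ra$, so the core of $\la b\ra$ in $G$ is trivial; and conjugation by $a^4$ fixes $b$. Next, there is no orbit of size $16$: such an orbit $\mathcal O$ is regular, so the induced subgraph on $\mathcal O$ is a Cayley graph of $G$, and since the at most one remaining vertex is fixed by $\Aut(\G)$ and hence either isolated from or joined to all of $\mathcal O$, any automorphism of $\G\!\restriction_{\mathcal O}$ extends to $\G$; this would make $\G\!\restriction_{\mathcal O}$ a GRR of $G$, contradicting Theorem \ref{23b} since ${G}'_{16}$ appears in Table \ref{table:kysymys}. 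Hence every orbit has size at most $8$. Faithfulness then forces a vertex $w$ with $a^4\notin G_w$; since $G_w\neq\{1\}$, $G_w$ is conjugate to $\la b\ra$, so (replacing $w$ in its orbit) we may take $G_w=\la b\ra$, $|\ma w|=8$, and $\la a\ra$ acts regularly on $\ma w$. Under the identification $\ma w\cong\mathbb{Z}/8$ given by $a^kw\mapsto k$, the element $b$ acts as multiplication by $5$ and $a^4$ as translation by $4$, and along $\ma w$ the stabilizers alternate between $\la b\ra$ and $\la a^4b\ra$. The bound $|V(\G)|\le 17$ permits at most two such orbits.

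It remains to produce an automorphism of $\G$ outside $G$. The candidate is the ``inversion'' $\phi$ that sends $a^kw\mapsto a^{\delta-k}w$ on each such orbit, for a suitable $\delta\in\mathbb{Z}/8$ chosen per orbit, and fixes every other vertex. Adjacency \emph{within} a special orbit is preserved automatically, since $a^{-k-l}\in\Aut(\G)$ carries $a^kw,a^lw$ to $a^{-l}w,a^{-k}w$; and $\phi$ is realised by no element of $G$, because on a special orbit it acts as $x\mapsto\delta-x$ on $\mathbb{Z}/8$, which is not of the form $x\mapsto x+t$ nor $x\mapsto 5x+t$. The work is in the adjacencies between a special orbit and the remaining vertices — all of which have stabilizer containing $a^4$. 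As in the proofs of Propositions \ref{2c} and \ref{k3}, I would use the freedom to replace $\G$ by its complement and to relabel orbit representatives, together with a case analysis on those stabilizers and, if necessary, a preliminary lemma in the spirit of Lemma \ref{p0} excluding awkward stabilizers (such as $\la a^2b\ra$), to check that $\phi\in\Aut(\G)$. This contradicts $\Aut(\G)\cong G$, giving $\alpha(G)\ge 18$.

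The main obstacle is precisely this last verification: forcing a single inversion, with its shifts, to be compatible with every edge joining the special orbit(s) to the $a^4$-fixed part of $\G$, and in particular handling the two-special-orbit case, where the bipartite graph between the two orbits must be symmetric about a suitable point of $\mathbb{Z}/8$. This is the step that dictates the WLOG reductions and the case distinction on stabilizers, and is the part most likely to require an auxiliary lemma analogous to Lemma \ref{p0}.
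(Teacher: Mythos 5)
Your outline follows the same route as the paper's proof: rule out a regular orbit via the GRR-Theorem, locate orbits of size $8$ with stabilizer conjugate to $\la b\ra$, aim for a contradiction via an ``inversion'' $a^kw\mapsto a^{\delta-k}w$, and supply the upper bound by a GAP-checked $18$-vertex graph. The preparatory steps (no size-$16$ orbit, $G_w=\la b\ra$ after conjugating, at most two such orbits) are correct. But the step you defer --- verifying that \emph{some} choice of shifts makes the inversion preserve the edges leaving the special orbit(s) --- is the entire mathematical content of the lower bound, and it cannot be dispatched by complementation, relabelling of representatives, or a case analysis on stabilizers. Write $N=\{n: w\sim a^nu\}\subseteq\mathbb{Z}/8$ for the bipartite pattern between the two special orbits. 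Invariance under $b$ only forces $N$ to be a union of the orbits $\{0\},\{2\},\{4\},\{6\},\{1,5\},\{3,7\}$ of $n\mapsto 5n$, and an inversion with shifts $\delta_u,\delta_w$ preserves these edges iff $N=c-N$ for $c=\delta_u-\delta_w$. There are admissible patterns, e.g.\ $N=\{0,1,2,4,5\}$, that are symmetric about no point of $\mathbb{Z}/8$ and survive complementation and shifting of $N$; so the existence of a valid $\delta$ must be \emph{proved}. The paper does this with a different auxiliary automorphism: Lemma \ref{triole} shows that the map translating the four vertices $\{w,a^4w,u,a^4u\}$ by the central involution $a^4$ and fixing everything else would be an automorphism unless $w$ is adjacent to exactly one of $a^2u,a^6u$ (and, by the same argument, exactly one of $u,a^4u$). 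Only these exclusive-or conditions, combined with $b$-invariance, yield the symmetry $w\sim a^nu\iff w\sim a^{2-n}u$ of display (\ref{11e}) that makes the inversion an automorphism. Your proposed auxiliary lemma ``excluding awkward stabilizers such as $\la a^2b\ra$'' is aimed at the wrong object: $\la a^2b\ra$ contains $a^4$ anyway, and what must be constrained is the adjacency pattern, not the list of stabilizers.

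The same lemma also closes a case your inversion cannot reach. If there were only one orbit with stabilizer conjugate to $\la b\ra$, the remaining vertices could still include a size-$8$ orbit with stabilizer $\la a^4\ra$, and its adjacency to $\ma w$ (e.g.\ $v\sim a^kw$ exactly for $k\in\{0,4\}$, so that $a^iv$ requires $\delta\in\{2i,2i+4\}$) can obstruct every inversion simultaneously. The paper instead applies the $a^4$-translation map with $u=w$, whose hypothesis $w\sim a^2w\iff w\sim a^6w$ holds automatically, to get an immediate contradiction and thereby force two distinct $\la b\ra$-orbits filling $16$ of the $17$ vertices. In short: you have the right target map and have correctly located the obstacle, but the auxiliary automorphism that removes the obstacle is missing, and without it the argument does not close.
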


Suppose that there exists a graph $\G$ on at most 17 vertices having $\Aut(\G)\cong G$ and consider the action of $G$ on the vertex set $V(\G)$.
As the action is faithful, there exists a vertex $w\in V(\G)$ such that $a^4\notin G_w$, hence $G_w\in\large\{\la 1\ra,\la a^4b\ra,\la b\ra\large\}$. If $G_w=\la 1\ra$ then the subgraph $\G_1$ of $\G$ that is induced by $\ma w$, has order 16 and automorphism group $\Aut(\G_1)\cong G$, which contradicts the fact that $G$ has no GRR. Therefore, since the groups $\la a^4b\ra,\la b\ra$ are conjugate, we assume that $G_w=\la b\ra$. Let us define the vertex
$u\in V(\G)$ to be such that $u\notin \ma w$ and $G_u=\la b\ra$, if there exists a second orbit that its elements are not fixed by $a^4$; if no such orbit exists, we let $u=w$. 


\begin{lemma}{\label{triole}}
The vertex $w$ is adjacent to exactly one of the vertices $a^2 u,$ $a^6u$ in $\Gamma$, with $\G$ as above.
\end{lemma}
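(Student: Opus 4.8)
The plan is to argue by contradiction: assume that $w$ is adjacent to both of $a^2u,a^6u$ or to neither, and produce an automorphism of $\G$ that is not one of the elements of $G$, contradicting $\Aut(\G)\cong G$. The group $G=\langle a,b\mid a^8=b^2=1,\ bab^{-1}=a^5\rangle$ has $\langle a^2\rangle$ central, $a^4$ as its unique central involution, $a\langle b\rangle a^{-1}=\langle a^4b\rangle$, and $ba^kb^{-1}=a^{5k}$. Hence along $\ma w=\{w,aw,\dots,a^7w\}$ the stabilizers are $\langle b\rangle$ at even powers of $a$ and $\langle a^4b\rangle$ at odd powers, $b$ fixes $w,a^2w,a^4w,a^6w$ pointwise, and $a^4$ acts on $\ma w$ as the fixed-point-free involution $a^kw\mapsto a^{k+4}w$; the same holds for $\ma u$. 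Since $G$ has no GRR, no orbit has trivial stabilizer, so with $|V(\G)|\le 17$ the orbits $\ma w$ and $\ma u$ are the only ones not fixed pointwise by $a^4$, each of size $8$; in particular, if $u\neq w$ these two orbits account for at least $16$ of the at most $17$ vertices, and any remaining vertex is fixed by all of $G$.

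The case $u=w$ must be handled first. Here $a^2\in\Aut(\G)$ cyclically permutes $w\mapsto a^2w\mapsto a^4w\mapsto a^6w\mapsto w$, so the subgraph of $\G$ induced on $\{w,a^2w,a^4w,a^6w\}$ is invariant under this $4$-cycle; for any graph on four vertices invariant under a cyclic relabeling, $w$ is adjacent to $a^2w$ if and only if it is adjacent to $a^6w$. Thus the asserted dichotomy would fail when $u=w$, which forces us instead to rule $u=w$ out: the induced circulant $\G[\ma w]$ on $\mathbb{Z}_8$ has automorphism group containing a dihedral (hence non-modular) group of order $16$, and a short case analysis on the possible forms of $\G$ in this situation shows it is incompatible with $\Aut(\G)\cong G$. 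So from now on $u\neq w$.

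For $u\neq w$ the relevant information is the bipartite adjacency between $\ma w$ and $\ma u$. Since powers of $a$ are automorphisms, $a^kw\sim a^ju$ depends only on $j-k$; write $a^kw\sim a^ju\iff f(j-k)=1$, and observe that $b\in\Aut(\G)$ yields $f(t)=f(5t)$, so $f(1)=f(5)$ and $f(3)=f(7)$, while the contradiction hypothesis is exactly $f(2)=f(6)$. I would then define $\psi$ to reflect each of the two orbits, $a^kw\mapsto a^{-k}w$ and $a^ju\mapsto a^{-j}u$, and to fix the at most one remaining vertex; this $\psi$ fixes $w$ and $u$ but sends $a^2u$ to $a^6u$. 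Within each orbit, and between an orbit and the fixed vertex, $\psi$ preserves adjacency because the reflection is realized pairwise by suitable powers of $a$, using that $a^2,a^4$ are central. The decisive step is to check that $\psi$ preserves adjacency between $\ma w$ and $\ma u$, i.e.\ that $f$ is symmetric: here $f(2)=f(6)$, $f(1)=f(5)$ and $f(3)=f(7)$ are the available inputs, to be supplemented by the rigidity coming from $\Aut(\G)=G$ and from the scarcity of other vertices. Granting that $\psi\in\Aut(\G)$, the contradiction follows at once: $\psi$ fixes $w$ and $u$, and since $G_w=G_u=\langle b\rangle$ and $b$ fixes $a^2u$, the only elements of $G$ fixing both $w$ and $u$ are $1$ and $b$, neither of which sends $a^2u$ to $a^6u$; hence $\psi\notin G$.

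I expect the main obstacle to be precisely this last verification — that the contradiction hypothesis $f(2)=f(6)$, together with the relations already available, forces $f$ to be symmetric — and it may be necessary to use a slightly more refined $\psi$ (for instance composing the orbit reflections with the action of $b$, or reflecting only $\ma u$) so that the adjacency constraints it must satisfy collapse exactly to the information the hypothesis supplies. Essentially all of the case checking in the proof should be concentrated there.
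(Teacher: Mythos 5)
Your overall strategy (assume $w\sim a^2u\iff w\sim a^6u$ and exhibit an automorphism of $\G$ outside $G$) is the right one, but the specific map you propose cannot be verified, and you have correctly located --- without resolving --- the fatal step. Writing $a^kw\sim a^ju\iff f(j-k)=1$, the relations available are $f(1)=f(5)$ and $f(3)=f(7)$ (from $b$, which acts as $t\mapsto 5t$ since $b$ fixes $w$ and $u$) together with the contradiction hypothesis $f(2)=f(6)$. Your reflection $a^kw\mapsto a^{-k}w$, $a^ju\mapsto a^{-j}u$ preserves the bipartite adjacency between the two orbits only if $f(t)=f(-t)$ for all $t$, i.e.\ $f(1)=f(7)$ and $f(3)=f(5)$; modulo the $b$-relations each of these is equivalent to $f(1)=f(3)$, which does not follow from anything you have. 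Composing with $b$ (which turns the reflection into $t\mapsto 3t$) or reflecting only one orbit does not remove this requirement. Your treatment of the case $u=w$ is also only a sketch: the automorphism group of $\G$ need not act faithfully on $\ma w$, so the circulant structure of the induced subgraph does not by itself yield the dihedral subgroup you invoke.

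The paper's proof avoids the symmetry issue entirely by using a different, \emph{local} map: $\phi$ multiplies only the four vertices $B=\{w,a^4w,u,a^4u\}$ by the central involution $a^4$ and fixes every other vertex. Preserving adjacency between a moved vertex and an unmoved vertex of the orbits requires exactly $f(2)=f(6)$ (the contradiction hypothesis) at the even powers of $a$, and $f(1)=f(5)$, $f(3)=f(7)$ at the odd powers, where $a^4b$ lies in the stabilizer; all vertices outside $\ma w\cup\ma u$ are fixed by $a^4$. The contradiction is that $\phi$ fixes $a^2w$ but not $w$, although $G_w=G_{a^2w}=\la b\ra$. This also disposes of $u=w$ uniformly, since then the hypothesis is vacuous and $\phi$ still yields the contradiction. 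The global reflection you wanted is precisely what the paper establishes \emph{later}, as statement (11e) in the proof of Proposition \ref{p5s} --- but only after Lemma \ref{triole} has supplied the asymmetry $f(2)\neq f(6)$, and with the reflection centred at $1$ rather than at $0$.
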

\begin{proof}
Suppose, conversely, that \vspace{-0.5 mm} \begin{equation}\label{10e}
 w\sim a^2u \iff w\sim a^6u.\end{equation}
Let $B=\{w,a^4w,u,a^4u\}$. Define $\phi:V(\G)\rightarrow V(\G)$ to be the map 
\[ 
\phi(v)={ \left\{
\begin{array}{ll}
     a^4v, \;&\;\text{ if }v\in B,\\
     v, \;&\;\text{ if }v\notin  B.
\end{array} 
\right. }
\]
We will show that $v_1\sim v_2\iff \phi(v_1)\sim \phi(v_2),$ for every $v_1,v_2\in V(\G)$. Indeed,
\begin{itemize}
 \item if $v_1,v_2\in B$ then $v_1\sim v_2\iff a^4v_1\sim a^4v_2,$ since $a^4\in \Aut(\G)$,
    \item if $v_1,v_2\notin B$ then clearly $v_1\sim v_2\iff \phi(v_1)\sim \phi(v_2),$
 \item if $v_1=a^{4k+2}v,v_2=a^{4l}v,k,l\in \N,v\in \{u,w\}$, then we have $a^{4k+2}v\sim a^{4l}v\iff a^{4k+2-4l}a^{4k+2}v\sim a^{4k+2-4l}a^{4l}v\iff a^{4(l+1)}v\sim a^{4k+2}v\iff a^4v_2\sim v_1$,
   \item if $v_1=a^{4k+2}v,v_2\in B\setminus\{v,a^4v\},k\in \N,v\in \{u,w\}$, then, by  (\ref{10e}),  $v_1\sim v_2\iff v_1\sim a^4v_2,$
   \item if $v_1=a^{2k+1}v,v_2\in B,k\in \N, v\in \{u,w\}$, then$\;v_1\sim v_2\iff v_1\sim a^4v_2$, since $a^4b\in G_{v_1}$ and $b\in G_{v_2}$,
    \item if $v_1\notin \ma {u}\cup \ma {w}$, $v_2\in B$ then $v_1\sim v_2\iff v_1\sim a^4v_2$, since $a^4\in G_{v_1}$.
    \end{itemize}
We conclude that $\phi$ is an  automorphism of $\G$ that fixes $a^2w$ but not $w$, which contradicts the equality
$G_{w}=G_{a^2w}$.
\end{proof}

\begin{proof}[Proof of Proposition \ref{p5s}]
If $\G$ is a graph on at most 17 vertices having $\Aut(\G)\cong G$  and  $\ma w$, $\ma u$ are the orbits of size 8 described above then,
by Lemma \ref{triole}, 
either $w\sim a^2u$ or $w\sim a^6u$ (hence $w\neq u$).
Arguing in a similar way, 
we can show that $w\sim u \iff w\nsim a^4u$. Without loss of generality, let us assume that $w\sim u,w\sim a^2u.$
Moreover, since $b\in G_w, b\in G_u,$ we have that $ w\sim a^3u \iff w\sim ba^3bu \iff w\sim a^7u$.
Therefore, the statement
\begin{equation}\label{11e}
w\sim a^nu\iff w\sim a^{2-n}u\end{equation}
 holds for every $n\in \mathbb{N}.$
 
Since $|V(\G)|\leq 17$ and $|\ma{w}\cup\ma{u}|=16$, there is at most one additional orbit, which has size 1.
Let $ \psi:V(\G)\rightarrow V(\G) $ be the map 
\[ 
\psi(v)=  {\left\{
\begin{array}{ll}
          a^{-k}u, \;&\;\text{ if }v=a^ku, \;k\in \N,\\
          a^{6-l}w, \;&\;\text{ if }v=a^lw, \;l\in \N,\\
          v, \;&\;\text{ if }|\ma v|=1
          .
\end{array} 
\right. }
\]
We will show that $\psi\in \Aut(\G)$. Indeed, 
$v_1\sim v_2\iff \psi(v_1)\sim \psi(v_2)$, for every $v_1,v_2\in V(\G)$, since
\begin{itemize}
 \item for $v_1=a^kv,v_2=a^lv,v\in \{u,w\}, k,l\in \mathbb{N}$, we have that $a^kv\sim a^lv\iff a^{j-k-l}a^{k}v\sim a^{j-k-l}a^lv\iff a^{j-l}v\sim a^{j-k}v$, for $j=0,6$,
        \item for $v_1=a^ku,v_2=a^lw,k,l\in \N$, we have that, by (\ref{11e}), $a^ku\sim a^lw\iff a^{k-l}u\sim w\iff a^{2-(k-l)}u\sim w\iff a^{-k}u\sim a^{6-l}w$,
          \item for $v_1,v_2$ such that $|\ma {v_1}|=1$, we have $v_1\sim v_2\iff v_1\sim a^kv_2,$ for every $k\in \mathbb{N}$, as $\la a\ra \leq  G_{v_1}$.
\end{itemize}
We have reached a contradiction since
$G_u=G_{a^2u}$ and $\psi$ fixes $u$ but not $a^2u$.

We will complete the proof by constructing a graph $\G$ on 18 vertices having automorphism group $\Aut(\G)\cong G$. 
\begin{construction}{\label{con300}}
Let $\G$ be a graph with $V(\G)=V_1\cup 
V_2\cup V_{3}$, where
$V_1=\{1,2,...,8\},\;V_{2}=\{1',2'...,8'\}$ and $V_{3}=\{1'',2''\}$.
We define the edge set of $\G$ to be such that,
for $v,w\in V_1$, 
\vspace{-8 mm}\begin{align*}
&v''\nsim w'';\\
v\sim w &\iff w-v\equiv 1,7\;\te{(mod }8);
\\v'\sim w' &\iff w-v\equiv 3,5\;\te{(mod }8);
\\v\sim w'&\iff w-v\equiv 0,1,3\;\te{(mod }8);
\\v\sim w''&\iff w=1;
\\v'\sim w''&\iff w=2.
\end{align*}
\end{construction}\vspace{-3 mm}
Using GAP \cite{gap} we computed that $\Aut(\G)\cong G$.
\end{proof}

\begin{proposition}{\label{arlu}}
The alternating group $A_4$ satisfies $\alpha(A_4)=16$.
\end{proposition}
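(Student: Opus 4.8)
The plan is to prove the two inequalities $\alpha(A_4)\le 16$ and $\alpha(A_4)\ge 16$ separately. Throughout I would fix $V_4\trianglelefteq A_4$, the unique subgroup of order $4$, and use three elementary facts: the three involutions of $V_4$ form a single conjugacy class in $A_4$; the only proper nontrivial subgroups of $A_4$ have orders $2$, $3$, $4$; and $A_4$ has no GRR by Theorem~\ref{23b}. In particular, since Babai's Theorem~\ref{30} only gives $\alpha(A_4)\le 24$, the upper bound needs a genuinely better construction, and the lower bound needs an argument of the same flavour as Lemmas~\ref{pq14} and Proposition~\ref{2c}.

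For the upper bound I would exhibit an explicit graph $\G$ on $16$ vertices with $\Aut(\G)\cong A_4$, in the style of the other constructions of this section, and verify $\Aut(\G)\cong A_4$ with GAP~\cite{gap}. A workable template is to take three $A_4$-orbits whose sizes sum to $16$ — for instance two copies of the $6$-point action on cosets of an involution subgroup together with one copy of the natural $4$-point action ($6+6+4=16$) — and glue them with $A_4$-invariant bipartite graphs, i.e.\ unions of $A_4$-orbits on the relevant products. The point is that, although each $6$-orbit by itself carries only the highly symmetric invariant graphs $\overline{K_6}$, $K_6$, $K_{2,2,2}$, $3K_2$, the action of $A_4$ has rank $4$ on a $6$-orbit whereas $S_4\le S_6$ has rank $3$; hence the bipartite graph between the two $6$-orbits can be chosen $A_4$-invariant but not invariant under any larger group, and the $4$-orbit is attached so as to pin down which copy of $A_4$ survives. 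The verification that nothing extra remains is exactly what GAP is used for.

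For the lower bound, suppose $\G$ is a graph with $\Aut(\G)\cong A_4$ and $|V(\G)|\le 15$, and consider the faithful action of $A_4$ on $V(\G)$. Each orbit has size $1$, $3$ (stabilizer $V_4$), $4$ (stabilizer a Sylow $3$-subgroup), $6$ (stabilizer an involution subgroup), or $12$. Since the involutions are conjugate, a $6$-orbit realises all three involution subgroups as stabilizers and a $4$-orbit realises all four Sylow $3$-subgroups; consequently an orbit of size $4$, $6$, or $12$ already makes the action faithful, while orbits of sizes $1$ and $3$ never do, so $\G$ must contain an orbit of size $4$, $6$, or $12$. Moreover on an orbit of size $1$, $3$, or $4$ the induced action is trivial, a copy of $C_3$, or $2$-transitive, so the only $A_4$-invariant graph on it is complete or empty; on a $6$-orbit the only invariant graphs are $\overline{K_6}$, $K_6$, $K_{2,2,2}$, $3K_2$, each with automorphism group strictly larger than $A_4$; on a $12$-orbit the induced graph is a Cayley graph of $A_4$, whose automorphism group properly contains the regular copy of $A_4$ (no GRR); and between two orbits the $A_4$-invariant bipartite graphs are unions of a bounded number of $A_4$-orbits on the product, e.g.\ a vertex with trivial orbit is joined to all or none of any other orbit.

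I would then rule out each admissible orbit configuration with $\le 15$ vertices by producing an automorphism of $\G$ not lying in $A_4$. If $\G$ has a regular orbit $O$, then at most $3$ further vertices remain, so $\G$ consists of $\G[O]$ (a Cayley graph of $A_4$) together with at most three fixed points or a single $3$-orbit; the no-GRR property gives an automorphism $\theta$ of $\G[O]$ fixing a vertex but not coming from the regular $A_4$, and since the extra vertices are joined to $O$ either in an all-or-nothing way or with neighbourhoods that are unions of $V_4$-orbits on $O$, one checks that $\theta$ (after possibly composing with a reflection of the $3$-orbit) extends to an automorphism of $\G$ outside $A_4$. In all remaining cases — no regular orbit, but a $6$-orbit present, or only orbits of sizes $\le 4$ — I would exhibit a nontrivial automorphism of the appropriate type: transposing two fixed points with equal neighbourhoods, reflecting a $3$-orbit, transposing two equal-size orbits carrying isomorphic structure, or "twisting by an involution $r\in V_4$" on a carefully chosen subset of $V(\G)$, which is legitimate because $r$ centralises every involution subgroup and hence commutes with the stabilizers on every $6$-orbit, exactly as in the maps $\phi$ of Lemma~\ref{pq14} and Proposition~\ref{2c}. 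The main obstacle is this final step: the individual building blocks are very rigid, but there are many configurations to dispose of, and the delicate ones are those closest to $16$ vertices — a $6$-orbit together with a $4$-orbit and a few small orbits (such as $6+6+1$, $6+4+4$, or $6+4+1+1+1+1+1$) and the configuration of a regular orbit plus a $3$-orbit — where the twist-by-$V_4$ automorphism must be arranged with care and where most of the bookkeeping lies.
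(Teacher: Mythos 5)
Your overall strategy coincides with the paper's: the upper bound is an explicit $16$-vertex graph with orbit sizes $6+6+4$ checked in GAP (this is exactly Construction~\ref{con100}), and the lower bound proceeds by classifying the possible orbit sizes $1,3,4,6,12$ of a faithful $A_4$-action on at most $15$ vertices and exhibiting, for each admissible configuration, a graph automorphism outside $A_4$. Your structural observations (rank $4$ of $A_4$ on the $6$-point action, rigidity of the invariant graphs on each orbit, the no-GRR input for a regular orbit) are all correct and are implicitly what drives the paper's Lemmas~\ref{opqt} and~\ref{opqtss} and the final map $\chi$.

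However, there is a genuine gap precisely at the step you yourself flag as the delicate one. For the configurations containing both a $6$-orbit and a $4$-orbit (e.g.\ $6+4+4$, $6+4+1+\cdots$), and more generally whenever a $4$-orbit is present, the ``twist by an involution $r\in V_4$'' cannot be made into a graph automorphism, no matter how the subset is chosen. The reason is that the relevant compatibility condition for a vertex $v_1$ in a $6$-orbit with stabilizer $\la b\ra$ and a vertex $v_2$ in a $4$-orbit with stabilizer a Sylow $3$-subgroup $P$ is that $v_1$ and $rv_1$ lie in the same $P$-suborbit, equivalently $r\in \la b\ra P$; but $\la b\ra P$ consists of the identity, $b$, and four $3$-cycles, so it contains no involution other than $b$, and the $P$-suborbits on the $6$-orbit are two $3$-cycles of edges that separate $v_1$ from $rv_1$. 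Your justification (``$r$ centralises every involution subgroup'') only controls the interaction of the twist with $6$-orbits and with orbits whose stabilizers contain $V_4$; it says nothing about $4$-orbits, which is exactly where it breaks. The paper resolves this with a different map: Lemma~\ref{opqt} eliminates all orbits of size $4$ outright using a permutation that multiplies by $a^2$ on some stabilizer classes and by $a$ on others --- in effect the action of an odd transposition of $S_4$ normalising $A_4$ --- and only after $4$-orbits and $12$-orbits are excluded does the $V_4$-twist $\chi$ finish the argument. Without this additional idea (or the equivalent ``extend to the normaliser $S_4$'' device, which would also dispose of the all-$4$-orbit configurations such as $4+4+4+3$), your case analysis cannot be completed.
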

Let $G=\langle a,b\rangle $, where $a=(1\;2\;3)$ and $b=(1\;2)(3\;4).$
Suppose that there exists a graph $\G$ on at most 15 vertices with $\Aut(\G)\cong G$.  

Since $G$ has no subgroup of order 6, there is no orbit of size 2.
Furthermore, there exists $z\in V(\G)$ such that $b\notin G_z$.
Then $|G_z|\in \{1,2,3\}$, hence the orbit $\ma z$ has size 4, 6 or 12. 
We examine each of these cases.
\begin{lemma}\label{opqt}
Let $G=A_4$ and let $\G$ be as in the previous proposition and consider the action of $G$ on $V(\G)$. Then, there exists no orbit of size 4.
\end{lemma}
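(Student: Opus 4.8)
The plan is to suppose, for contradiction, that the action of $G=A_4$ on $V(\G)$ has an orbit $\ma z$ of size $4$, so $G_z$ is a Sylow $3$-subgroup, say $G_z=\la a\ra$. First I would recall that $G$ has a unique normal subgroup $V=\{1,(12)(34),(13)(24),(14)(23)\}$ of order $4$, and that $V$ acts regularly on $\ma z$; in particular, the element $b\in V$ has no fixed point in $\ma z$, while each element of order $3$ fixes exactly one vertex of $\ma z$. The idea, as in the earlier lemmas of this section (Lemmas \ref{pq14}, \ref{ika2}, \ref{p0}), is to use the existence of a small orbit to build a nontrivial graph automorphism that does not come from $G$, contradicting $\Aut(\G)\cong G$.

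The construction I have in mind is to fix one vertex $z\in\ma z$ with $G_z=\la a\ra$ and consider the ``swap'' of the two vertices $bz$ and $b a z=a^2 b z$ (the two neighbours of $z$ under $V\setminus\{1,b\}$), or more symmetrically a permutation of $\ma z$ that centralizes $G_z$ but is not realized by $V$ — concretely the transposition $\tau$ interchanging $bz$ with $b^{a}z$ and fixing $z$ and $b^{a^2}z$. Since any permutation of the $4$-element set $\ma z$ fixing $z$ lies in $S_3\cong N_{S_{\ma z}}(\la a\ra)/\la a\ra$, and the image of $G$ in $\mathrm{Sym}(\ma z)$ is all of $A_4$, such a $\tau$ genuinely lies outside the image of $G$. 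Then I would extend $\tau$ to the rest of $V(\G)$ by the identity and verify it is a graph automorphism. The verification splits into the usual cases: adjacencies internal to $\ma z$ are preserved because $\tau$ restricted to $\ma z$ can be chosen to agree with an element of $G$ that is \emph{only} required to match on the relevant pairs; adjacencies between $\ma z$ and any orbit on which $a$ acts trivially are preserved because, having replaced $\G$ by its complement if necessary, $z$ is joined to at most a symmetric pattern of vertices there; and adjacencies within $V(\G)\setminus\ma z$ are untouched. To make the internal-to-$\ma z$ case work I would exploit the $\la a\ra$-action and a complementation normalization exactly as in the proof of Lemma \ref{ika2}, ensuring $z$ is adjacent to at most half the vertices of any other orbit and that the adjacency pattern from $z$ into $\ma z\setminus\{z\}$ is invariant under $\tau$.

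The main obstacle I anticipate is handling adjacencies between $\ma z$ and the other orbits of size $3$, $6$, or $12$ (orbits of size $2$ are already excluded, and size-$1$ orbits are harmless): there the stabilizers need not contain $b$, so I cannot simply invoke $b\in\Aut(\G)$ to move the relevant vertices, and I must instead argue that the number of neighbours of $z$ in each such orbit, together with the $\la a\ra$-orbit structure on it, forces a symmetric pattern compatible with $\tau$. This is where the bound $|V(\G)|\le 15$ enters decisively, since it severely limits how many orbits of each size can coexist with $\ma z$; I would enumerate the possible orbit-size multisets summing to at most $15$ and dispatch each. Once $\tau$ is shown to be a graph automorphism, it is a nontrivial element fixing the vertex $z$ pointwise-stabilized by $\la a\ra$ and acting nontrivially on $\ma z$, so $\tau\notin G$ (as $G_z=\la a\ra$ acts trivially on no further structure forcing this), contradicting $\Aut(\G)\cong G$ and establishing the lemma.
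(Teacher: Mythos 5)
Your overall strategy --- produce a nontrivial permutation of $V(\G)$ that is a graph automorphism yet fixes two vertices whose stabilizers intersect trivially --- is the same as the paper's, but the specific map you propose does not work, and the ``main obstacle'' you flag is exactly where it fails. You take a transposition $\tau$ of two vertices of the size-$4$ orbit $\ma z$ and extend it by the identity on all other orbits. The bipartite graph between $\ma z$ and a size-$6$ orbit $\ma u$ is not, in general, invariant under such a map: each point stabilizer $P_i=G_{z_i}$ is a Sylow $3$-subgroup acting on $\ma u$ with two orbits of size $3$ and no fixed points (the stabilizers in $\ma u$ have order $2$), so $N(z_i)\cap \ma u$ is a union of $P_i$-orbits and each $z_i$ has $0$, $3$ or $6$ neighbours in $\ma u$. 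In the valency-$3$ case, $N(z_i)\cap\ma u$ is a single $P_i$-orbit, and two distinct stabilizers $P_i\neq P_j$ cannot share a $3$-element orbit in $\ma u$ (such a set would be invariant under $\la P_i,P_j\ra=G$, contradicting transitivity on $\ma u$); hence the two vertices you swap have \emph{different} neighbourhoods in $\ma u$, and $\tau$ destroys edges. Replacing $\G$ by its complement does not remove this case, since the bipartite complement of a valency-$3$ pattern is again a valency-$3$ pattern. The same failure occurs between two orbits of size $4$, where the $G$-invariant bipartite graphs of valency $1$ or $3$ are the stabilizer-matching or its complement. So the adjacency pattern is emphatically not ``forced to be symmetric,'' and the extension-by-identity step is a genuine gap.

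The paper's proof avoids this by defining a single permutation $\phi$ acting nontrivially and \emph{coherently} on every orbit of size $4$ at once (in each, swapping the pair of vertices whose stabilizers are $\la aba\ra$ and $\la ab\ra$ while fixing the pair with stabilizers $\la a\ra$ and $\la a^2b\ra$), on the unique possible orbit of size $6$ (swapping the two pairs with stabilizers $\la b\ra$ and $\la a^2ba\ra$), and, when a size-$3$ and a size-$6$ orbit coexist, on a chosen pair in the size-$3$ orbit as well; only the remaining vertices are fixed. (Your worry about size-$12$ orbits is moot: $4+12>15$.) To rescue your argument you would have to conjugate your transposition into each other orbit on which the Sylow $3$-subgroups do not act with all-or-nothing neighbourhoods, which is essentially reconstructing the paper's map.
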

\begin{proof}
Assume, in contrast, that there exists some orbit of size 4.
If there also exists an orbit of size 6 as well as an orbit of size 3, then without loss of generality we let $w\in V(\G)$ be such that $|\ma w|=3$ and $u\sim aw\iff u\sim a^2w$, for every $u\in V(\G)$ having $G_u=\la b\ra$; this is possible since the bound $|V(\G)|\leq 15$ ensures that there is at most one orbit of size 6. Otherwise, let $w$ be such that $G_w=\la aba\ra$.
Let us construct the map  $\phi:V(\G)\rightarrow V(\G)$, where 
\[ 
\phi(v)= { \left\{
\begin{array}{ll}
     a^2v,\;&\;\text{     if }\;G_v=\langle b\rangle ,G_v=\langle aba\rangle \text{ or   $v=w$}, \\
     av,\;&\;\text{     if }\;G_v=\langle a^2ba\rangle ,G_v=\langle ab\rangle\text{ or $v=a^2w$}, \\
     v,\;&\;\text{     otherwise}.
\end{array} 
\right. }\]
Practically, $\phi$ interchanges two pairs of vertices in the orbit of size 6, if it exists, one pair in the orbit of size 3, if both an orbit of size 3 and 6 exist, and one pair in every orbit of size 4. We will show that $\phi$ is an automorphism of $\G$.
The property  $v_1\sim v_2 \iff \phi(v_1)\sim \phi(v_2)$ holds for every $v_1,v_2\in V(\G)$, since 
\begin{itemize}
     \item if $\phi(v_1)=v_1,\phi(v_2)=v_2$ then clearly $v_1\sim v_2 \iff \phi(v_1)\sim \phi(v_2)$,
    \item if $(|\ma {v_1}|,|\ma {v_2}|)=1$ then there are either no edges or all possible edges between $\ma {v_1}$ and $\ma {v_2}$, hence $v_1\sim v_2 \iff \phi(v_1)\sim \phi(v_2)$, 
    \item if $|\ma {v_1}|=3,|\ma {v_2}|=6$ then there is exactly one orbit of size 3 and one of size 6; $\phi$ was constructed to preserve adjacency and non-adjacency between $v_1$ and $ v_2$,
    \item if $v_1,v_2\in\ma w$ and $|\ma w|=3$, then the subgraph induced by $\ma w$ is either a clique or empty, thus $v_1\sim v_2 \iff \phi(v_1)\sim \phi (v_2)$,
    \item if
    $G_{v_1}=\la aba^2 \ra,G_{v_2}\in\{\la b \ra, \la a^2ba \ra\}$ then $v_2\in\{av_1, a^2v_1, bav_1, abav_1\}$; $a^2b^k\in \Aut(\G)$ implies that $v_1\sim b^kav_1\iff  a^2b^kv_1\sim v_1$ for $k\in {\{0,1\}}$. Since $aba^2\in G_{v_1}$ we have $a^2bv_1=abav_1$ hence $v_1\sim b^kav_1\iff v_1\sim ab^kav_1$ for $k\in {\{0,1\}}$;
   \item if $G_{v_1}=\la a^2ba\ra,G_{v_2}=\la b\ra, v_2=abv_1$, then $v_1\sim abv_1\iff ba^2v_1\sim v_1\iff ba^2(a^2ba)v_1\sim v_1\iff a^2bv_1\sim v_1\iff bv_1\sim av_1 \iff a^2v_2\sim av_1$,  as $ba^2,a \in \Aut(\G)$,
    \item if $|G_{v_1}|, |G_{v_2}|\in \{4,6\}$, where $\phi(v_1)=v_1,\phi(v_2)\neq v_2$ and $\te{min}\{|G_{v_1}|,|G_{v_2}|\}=4$,
     then $v_2=a^ku$ for some $k\in {\{0,1\}}$ and $u\in V(\G)$ such that $G_u\in\{\la a^2ba\ra,\la ab \ra$\}. Since $v_1\sim u\iff v_1\sim h_1h_2u,\forall h_1\in G_{v_1}, h_2\in G_u$, and $a \in \{h_1h_2\mid h_1\in G_{v_1},h_2\in G_u \}$, we have $v_1\sim u \iff v_1\sim au$,  hence $v_1\sim v_2\iff v_1\sim \phi(v_2)$;
    \item  if $|G_{v_1}|, |G_{v_2}|\in \{4,6\}$, where $\phi(v_1)=a^2v_1,\phi(v_2)=a v_2$ and $\te{min}\{|G_{v_1}|,|G_{v_2}|\}=4$,
     then $v_1\sim v_2\iff v_1\sim h_1h_2v_2,\forall \;h_1\in G_{v_1}, h_2\in G_{v_2}$, and $a^2 \in \{h_1h_2\mid h_1\in G_{v_1},h_2\in G_{v_2} \}$. Hence
     $v_1\sim v_2 \iff  v_1\sim a^2v_2 \iff  a^2v_1\sim av_2$, as $a^2\in \Aut(\G)$;
    \item if
    $\phi(v_1)=a^kv_1,\phi(v_2)=a^kv_2 $ for $k\in{\{1,2\}}$, then $a^k\in \Aut(\G)$ implies that
    $v_1\sim v_2 \iff a^kv_1\sim a^kv_2$,
\end{itemize}
The non-trivial
map
$\phi$ fixes two vertices, $v_1,v_2$, such that
$G_{v_1}=\langle a\rangle$, $ G_{v_2}=\langle a^2b\rangle $, but $G_{v_1}\cap G_{v_2}=\{1\}$; a contradiction.
\end{proof}
\begin{lemma}\label{opqtss}
Let $G=A_4$ and let $\G$ be as in Proposition \ref{arlu} and consider the action of $G$ on $V(\G)$. Then,
there exists no orbit of size 12.
\end{lemma}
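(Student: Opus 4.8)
The plan is to assume, for contradiction, that the action of $G=A_4$ on $V(\G)$ has an orbit of size $12$ — that is, a vertex $z$ with trivial stabilizer — and to exhibit an automorphism of $\G$ outside $G$, contradicting $\Aut(\G)\cong G$. Since $|V(\G)|\le 15$, the at most three vertices outside $\ma z$ lie in orbits of size $1$ or in one orbit of size $3$: an orbit disjoint from $\ma z$ has size dividing $12$, size $2$ is impossible because $A_4$ has no subgroup of order $6$, and sizes $\ge 4$ do not fit. Identify $\ma z$ with $A_4$ so that $G$ acts by left multiplication with $z$ the identity $1$; then the subgraph $\G_1$ induced on $\ma z$ is a Cayley graph $\te{Cay}(A_4,S)$ for some inverse-closed $S\subseteq A_4\setminus\{1\}$.

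The key point is that $S$ is invariant under some non-trivial group automorphism of $A_4$. Here I would write $S=S_2\sqcup S_3$, with $S_2$ a subset of the three involutions of $A_4$ and $S_3$ the union of the non-identity parts of a set $P$ of Sylow $3$-subgroups, and use that $\Aut(A_4)\cong S_4$ acts on the three involutions as $S_3$, with kernel the characteristic Klein four subgroup $V$, and on the four Sylow $3$-subgroups as the natural $S_4$. A short check over the cases $|S_2|\in\{0,1,2,3\}$, $|P|\in\{0,1,2,3,4\}$ then shows that the stabilizer of $S_2$ and the stabilizer of $P$ meet non-trivially — e.g.\ for $|S_2|\in\{1,2\}$ the former is a Sylow $2$-subgroup of $S_4$ containing $V$, which meets every point- and pair-stabilizer of the natural action, and for $|S_2|\in\{0,3\}$ it is all of $S_4$.

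Now fix such a $\theta\in\Aut(A_4)$ with $\theta\ne 1$; it is a graph automorphism of $\G_1$ fixing $1$, and, $V$ being characteristic, $\theta$ permutes the three cosets of $V$ in $A_4$. I would extend $\theta$ to a permutation $\psi$ of $V(\G)$ by taking $\psi=\theta$ on $\ma z$, $\psi=\te{id}$ on every fixed vertex, and, on an orbit $\{u,au,a^2u\}$ of size $3$, using that $N(w)\cap\ma z$ is a union of cosets of $V=G_u$ for each $w$ in the orbit, that these three unions are cyclically permuted by $a$ (so they form one of the four $\langle a\rangle$-orbits among the subsets of the three-element set of cosets, each of which is also $\theta$-invariant), and hence that $\theta$ permutes them; let $\psi$ act on the orbit by the matching permutation. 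One checks that $\psi$ preserves adjacency within $\ma z$ (as $\theta\in\Aut(\G_1)$), within each small orbit (complete or empty by vertex-transitivity), between $\ma z$ and a size-$3$ orbit (governed exactly by the coset data above), and between fixed vertices (all fixed by $\psi$). Thus $\psi\in\Aut(\G)$; it is non-trivial and fixes $z$, whereas the only element of $G$ fixing $z$ is the identity, so $\Aut(\G)\supsetneq G$, a contradiction.

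I expect the main obstacle to be the existence of this structure-preserving $\theta$: the GRR-Theorem (Theorem \ref{23b}) only yields that $\Aut(\G_1)\ne A_4$, hence a vertex-fixing automorphism of $\G_1$ which need not respect the cosets of $V$ and so need not extend across a size-$3$ orbit; upgrading it to a group automorphism preserving $S$ via the action of $\Aut(A_4)\cong S_4$ on involutions and Sylow $3$-subgroups is the crux. When $|V(\G)|<15$ the argument is easier and avoids this: every orbit outside $\ma z$ is then a fixed vertex, adjacent to all of $\ma z$ or to none of it, so any non-trivial $1$-fixing automorphism of $\G_1$ (which exists since $A_4$ has no GRR) extends by the identity and already gives the contradiction.
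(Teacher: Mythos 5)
Your proof is correct, and at the strategic level it follows the same skeleton as the paper: realise the size-$12$ orbit as a Cayley graph $\te{Cay}(A_4,S)$, produce a non-identity permutation of $A_4$ fixing the base vertex and preserving adjacency, and extend it over the at most three remaining vertices. The difference lies in how the key automorphism is produced. The paper defers to Watkins' proof that $A_4$ has no GRR: after complementing so that $|S|\leq 5$, it runs a case analysis on which of the four inverse-pairs of order-$3$ elements meet $S$ and conjugates by an explicitly chosen transposition $\tau\in S_4\setminus A_4$, choosing the distinguished vertex $w$ in the size-$3$ orbit so that $\tau$ extends. You instead give a self-contained counting argument: writing $S=S_2\sqcup S_3$ and using that $\Aut(A_4)\cong S_4$ acts on the three involutions through $S_3$ with kernel $V$ and on the four Sylow $3$-subgroups naturally, the setwise stabilizers of $S_2$ and of $P$ have orders forcing a non-trivial intersection (e.g.\ $8\cdot 4>24$ in the worst case), so some $\theta\neq 1$ preserves $S$ with no case analysis and no complementation; your $\theta$ may be inner, which is harmless since $Z(A_4)=1$ makes it a non-identity permutation fixing the base vertex. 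Your extension over the size-$3$ orbit, via the observation that the three neighbourhoods in $\ma z$ are unions of cosets of the characteristic subgroup $V$ cyclically permuted by $a$ and hence lie in a single $\theta$-invariant $\langle a\rangle$-orbit of subsets of $A_4/V$, is also more uniform than the paper's, which relies on choosing $w$ with the special property $v\sim aw\iff v\sim a^2w$. What your route buys is a complete argument where the paper's is a sketch; what it costs is the explicit description of the automorphism.
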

\begin{proof}

Suppose that $v\in V(\G)$ and $|\ma v|=12$. If there exists an orbit of size 3, let $w\in V(\G)$ be such that $v\sim aw\iff v\sim a^2w$ and $|\ma w|=3$; otherwise, let $w=v$.
In \cite[Proposition 3.7]{ww}, M. E. Watkins proved that $G$ has no GRR. Arguing in a similar way, we can reach a contradiction.

By possibly replacing $\G$ with its complement, we assume that the
size of the set $\{gv\;{\mathlarger{\mid}}\; v\sim gv,\;g\in G\}$ is bounded by 5. Since $v\sim gv \iff v \sim g^{-1}v$ for $g\in  \Aut(\G)$, $v$ is connected to at most 2 vertices of the set $\{av,abav,abv,a^2bv\}$. 

We consider two cases for the number of neighbours of $v$ in $\{av,abav,abv,a^2bv\}$. First, we let this number be at most equal to 1. Moreover, if $v$ is adjacent to exactly one of $av,abav,abv,a^2bv$, then without loss of generality we assume that this vertex is $av$. 
Let $\phi:V(\G)\rightarrow V(\G)$ be such that 
\[ \phi(u)=\left\{ 
\begin{array}{ll}
      \tau g\tau^{-1} v,\;&\;  \text{  if }u=g\te{v},\;g\in G,\;\te{v}\in\{v,w\},\\
      u, \;&\;\text{  if }G_u=G,
\end{array} 
\right. \]
where $\tau$ is the transition $\tau=(12)\in S_4$, if $v\sim aba^2v \iff v\sim a^2bav$ or, if not, then $\tau$ is the transition $(23)$ or $(13)$, if $v\sim bv \iff v\sim a^2bav$ or $v\sim bv \iff v\sim aba^2v$, respectively.

Let us now consider the case when $v$ is adjacent to two vertices of the set $\{av,abav,abv,a^2bv\}$. We assume that $v$ is adjacent to $av$ and $ abav$; the proof for the other cases is similar. Let us define the map $\psi:V(\G)\rightarrow V(\G)$, where 
\[ 
\psi(u)= \left\{ 
\begin{array}{ll}
      b gb^{-1} \te{v},\;&\; \text{  if }u=g\te{v},\;g\in G,\;\te{v}\in\{v,w\},\\
      u, \;&\;\text{  if }G_u=G.
\end{array} 
\right.\]
 
The maps $\phi$ and $\psi$ are non-trivial  automorphisms of $\G$; the proof is similar to the proof of \cite[Proposition 3.7]{ww}. However, $\phi(v)=\psi(
v)=v$ and $G_v=\{1\}$; a contradiction.
\end{proof}

\begin{proof}[Proof of Proposition \ref{arlu}]
We assumed that $\G$ is a graph on at most 15 vertices having $\Aut(\G)\cong G$.
Moreover, we observed that,
since the action of $G$ on $V(\G)$ is faithful, $b\notin G_z$, for some $z\in V(\G)$. By Lemmas \ref{opqt} and
\ref{opqtss}, there exists no orbit of size 4 or 12, and $|G_z|=6.$
We will show that the map $\chi:V(\G)\rightarrow V(\G)$,  
\[ 
\chi(v)= {\left\{
\begin{array}{ll}
     aba^2v,\;&\;\text{     if }G_v=\la b\ra,\\
     v,\;&\;\text{     if }G_v\neq\la b\ra,
\end{array} 
\right. }
\]
is an automorphism of $\G$. 
Indeed, $v_1\sim v_2\iff \chi(v_1)\sim \chi(v_2)$, for all $v_1,v_2\in V(\G)$, as
\begin{itemize}
\item if $G_{v_1}=G_{v_2}=\la b\ra$ then $v_1\sim v_2\iff aba^2v_1\sim aba^2v_2$, since $ aba^2\in \Aut(\G)$,
    \item if $G_{v_1}\neq\la b\ra, G_{v_2}\neq\la b\ra$, then clearly $v_1\sim v_2\iff \chi(v_1)\sim \chi(v_2)$,
    \item if $G_{v_1}\in {\{\langle b,a^2ba\rangle , G\}}, G_{v_2}=\la b\ra$, then $aba^2\in G_{v_1}$ hence 
    ${v_1\sim v_2\iff v_1\sim aba^2v_2}$,
    \item if $G_{v_1}=\la h\ra$, for some ${h\in \{aba^2, a^2ba\}}$ and $ G_{v_2}=\la b\ra$, then
    $ v_1\sim v_2\iff v_1\sim hv_2 \iff v_1\sim aba^2v_2$, since $h\in G_{v_1}, b\in G_{v_2}$ and     $aba^2\in \{h,hb\}$.
\end{itemize}
The non-trivial
map
$\chi$ fixes two vertices, $v_1,v_2\in \ma z$ such that
$G_{v_1}=\la aba^2\ra,  G_{v_2}=\la a^2ba\ra$, contradicting the property $G_{v_1}\cap G_{v_2}=\{1\}$.
Therefore, $\alpha(G)\geq 16$.

We will show that $\alpha(G)= 16$ by constructing a graph $\G$ on 16 vertices with $\Aut(\G)\cong G$.
\begin{construction}{\label{con100}}
Let $\G$ be a graph with vertex set $V(\G)=\ma 1\cup 
\ma {1'}\cup \ma {1''}$, where
$\ma 1=\{1,2,...,6\},\;\ma {1'}=\{1',2',...,6'\}$ and $\ma {1''}=\{1'',2'',3'',4''\}$.
We define the edge set of $\G$ to be such that,
for $v,w$ $\in \ma 1$, 
\vspace{ -2 mm}\begin{align*}
v\sim w &\iff v\not\equiv w\;\te{(mod }3);
\\v'\sim w' &\iff v\neq w ;
\\
v''\sim w'' &\iff v\neq w 
;
\\v\sim w' &\iff w-v\equiv 2\;\te{(mod }3);
\\
v\sim w''& \iff \\ [v,w]\in\;& \big\{{(1,1),(2,1), (3,1),(1,2),(5,2),(6,2),(2,3),(4,3), (6,3),(3,4),(4,4),(5,4)}\big\};
\\v'\sim w'' & \iff v\nsim w''
.
\end{align*}\vspace{-12 mm}
\end{construction}
Using
the mathematical software GAP \cite{gap} we computed that $\Aut(\G)\cong G$.
\end{proof}


\begin{thebibliography}{references}
\bibitem{arli}{Arlinghaus, W.: The classification of minimal graphs with given abelian automorphism group. Amer. Math. Soc. 330 (1985)}
\bibitem{Babai}{Babai, L.: On the minimum order of graphs with given group. Can. Math. Bull. 17(4), 467-470 (1974). https://doi.org/10.4153/CMB-1974-082-9}
\bibitem{fru}{Frucht, R.: Herstellung von Graphen mit vorgegebener abstrakter
Gruppe, Compos. Math. 6, 239-250 (1939)}
\bibitem{gap}{GAP4: GAP - Groups, Algorithms, and Programming, 
  Version 4.11.0.
  {http://gap-system.org} (2020)}
\bibitem{Godsil}{Godsil, C.D.: {GRR's for non-solvable groups}.
University of Melbourne (1978)}


\bibitem{gl1}{Graves, C., Graves, S., Lauderdale, L.: Vertex-minimal graphs with dihedral symmetry I. Discrete Math. 340, 2573–2588 (2017). https://doi.org/10.1016/j.disc.2017.06.019}

\bibitem{grav}{Graves, C., Graves, S., Lauderdale, L.: Smallest graphs with given generalized quaternion automorphism group. 
J. Graph Theor. 87(4), 430-442 (2018). https://doi.org/10.1002/jgt.22166}

\bibitem{gl}{Graves, C., Lauderdale, L.: Vertex-minimal graphs with dihedral symmetry II. Discrete Math. 342, 1378–1391 (2019). https://doi.org/10.1016/j.disc.2018.12.028}

\bibitem{hag}{Haggard, G.: The least number of edges for graphs having dihedral automorphism. Discrete Math. 6(1), 53-78 (1973). https://doi.org/10.1016/0012-365X(73)90036-8}

\bibitem{hetze}{Hetzel, D.: Über reguläre graphische Darstellungen von auflösbaren Gruppen (Diplomarbeit). Technische Universität Berlin (1976)}

\bibitem{im}{Imrich, W.: Graphs with transitive abelian automorphism group.
Combinatorial theory and its applications, Coll. Math. Soc. János Bolyai 4, 651-656 (1969)}
\bibitem{last}{Lauderdale, L.K., Zimmerman, J.: Vertex-minimal graphs with nonabelian 2-group symmetry.  J. Algebraic Combin. 54, 205–221 (2021). https://doi-org.focus.lib.kth.se/10.1007/s10801-020-00975-y}
\bibitem{lie}{Liebeck, M.W.: On graphs whose full automorphism group is an alternating group or a finite classical group. P. Lond. Math. Soc. 47
, 337–362 (1983). https://doi.org/10.1112/plms/s3-47.2.337}
\bibitem{djmc}{McCarthy, D.J.: Extremal problems for graphs with dihedral automorphism group.
Ann. NY Acad. Sci. 319, 383–390 (1979). https://doi.org/10.1111/j.1749-6632.1979.tb32813.x}
\bibitem{nowit}{Nowitz, L.A.: On the non-existence of graphs with transitive generalized dicyclic groups. J. Comb. Theory 4,
49-51 (1968)
}
\bibitem{nw}{Nowitz, L.A., Watkins, M.E.:  Graphical regular representations of non-abelian groups, I. Can. J. Math. 24(6),
993-1008 (1972). https://doi.org/10.1016/S0021-9800(68)80086-9}
\bibitem{sabb}{Sabidussi, G.: On the minimum order of graphs with given automorphism group. 
Monatsh. Math. 63(2), 124-127 (1959)}
\bibitem{sabbb}{Sabidussi, G.: Review: The smallest graph whose group is cyclic. 
 MR
0194353, 70–71 (1966)}
\bibitem{w1}{Watkins, M.E.: On the action of non-abelian groups on graphs. J. Comb. Theory Ser. B 11(2), 95-104 (1971)}
\bibitem{ww}{Watkins, M.E.: Graphical regular representations of alternating symmetric, and miscellaneous small groups. Aequationes Math. 11, 40–50 (1974)}
\bibitem{jess}{Woodruff, J.: A Survey of Graphs of Minimum Order with Given Automorphism Group (dissertation). University of Texas at Tyle (2016)}. https://scholarworks.uttyler.edu/math\_grad/6 
\end{thebibliography}
\end{document}